\documentclass[12pt]{amsart}

\usepackage{enumerate}

\usepackage[active]{srcltx}
\usepackage{nicefrac}

\usepackage{amsthm,amssymb,setspace}
\usepackage[pagebackref,colorlinks,linkcolor=red,citecolor=blue,urlcolor=blue,hypertexnames=true]{hyperref}
\usepackage{amsrefs}
\usepackage[matrix, arrow]{xy}

\theoremstyle{plain}
\newtheorem{theorem}{Theorem}[section]

\newtheorem{thmstar}{Theorem}

\newtheorem{corollary}[theorem]{Corollary}
\newtheorem{lemma}[theorem]{Lemma}
\newtheorem{proposition}[theorem]{Proposition}

\theoremstyle{definition}
\newtheorem{question}[theorem]{Question}

\theoremstyle{remark}

\newtheorem{remark}[theorem]{Remark}

\allowdisplaybreaks[4]

\begin{document}

\onehalfspace

\title[]{Factors with prescribed number of invariant  subalgebras not arising from subgroups}

\author{Yongle Jiang}
\address{Yongle Jiang, School of Mathematical Sciences, Dalian University of Technology, Dalian, 116024, China}
\email{yonglejiang@dlut.edu.cn}

\author{Qinxuan Xu*}
\address{Qinxuan Xu (Corresponding author), School of Mathematical Sciences, Dalian University of Technology, Dalian, 116024, China}
\email{qxqxq609@163.com}
\thanks{*-corresponding author}

\begin{abstract}
 For any given integer $n\geq 1$, we construct i.c.c. groups $G$ such that the II$_1$ factors $L(G)$ have exactly $n$-many $G$-invariant von Neumann subalgebras not arising from subgroups.
\end{abstract}

\subjclass[2020]{Primary 46L10; Secondary 37A44, 47C15}

\keywords{invariant subalgebras, algebraic actions, factor maps, characters}

\maketitle


\section{Introduction}

The study of invariant von Neumann subalgebras in group von Neumann algebras has received considerable attention in recent years. Motivated by the pioneering works \cites{AB,CD,KP}, Amrutam and the first named author introduced
the invariant von Neumann subalgebras rigidity (ISR) property  \cite{AJ}. A countable discrete group $G$
is said to satisfy the \emph{ISR property} if every 
$G$-invariant von Neumann subalgebra 
$M\subseteq L(G)$ is of the form $L(H)$ for some normal subgroup $H\lhd G$, where $G$ acts on $L(G)$ by conjugation. In other words, for groups with the ISR property, the lattice of $G$-invariant von Neumann subalgebras of 
$L(G)$ is completely determined by the lattice of normal subgroups of 
$G$, hence precluding the existence of any ``exotic" invariant subalgebras that do not arise from underlying group-theoretic structure. It can also be thought of as an ingredient to classify the simplest type, i.e. the Dirac type of invariant random von Neumann subalgebras, a new concept introduced in \cite{AHO}.

Since its introduction, the ISR property has been established for a broad array of groups. Amrutam-Jiang originally proved that many ``negatively curved" groups possess the ISR property \cite{AJ}, including all torsion-free non-amenable hyperbolic groups and torsion-free groups with positive first 
$\ell^2$-Betti number under mild assumptions, as well as certain finite direct products thereof. Subsequent works have extended these results in several directions. For instance,  Chifan-Das-Sun proved that all acylindrically hyperbolic groups with trivial amenable radical have this ISR property \cite{CDS}. The first example of an infinite amenable group with the ISR property was constructed by Jiang-Zhou \cite{JZ}, showing that the finitary permutation group 
$S_{\infty}$ enjoys this rigidity property. More recently, Dudko-Jiang developed a character approach to the ISR property \cite{DJ}, which has been proven to be effective in treating new classes of both amenable and non-amenable groups in later development \cites{ADJS,jiangli}. 
These developments have enriched our understanding of when the ISR property holds, although it is still unclear whether all non-amenable groups with trivial amenable radical have this property or not. We also remark that a relative, and in fact stronger version of this ISR property has been explored quite recently by Amrutam in \cite{A-relative}.

In contrast, far less is known about the structure of 
invariant von Neumann subalgebras in case that the ISR property fails. In such cases,  a complete classification of all 
$G$-invariant von Neumann subalgebras in $L(G)$ becomes a subtle and challenging problem. To date, only a handful of complete classifications have been achieved for groups without the ISR property. In each of these examples, the number of invariant subalgebras not coming from subgroups is either exactly one or infinitely many.

\begin{itemize}
    \item The case of \textbf{exactly one} exotic invariant subalgebra: In \cite{ADJS}, Amrutam-Dudko-Jiang-Skalski proved a classification result (Theorem B therein) where the group 
$G$ fails the ISR property, yet admits precisely one 
$G$-invariant von Neumann subalgebra beyond those arising from normal subgroups. This provides an interesting example where the failure of rigidity is minimal.
\item The case of \textbf{infinitely many} exotic invariant subalgebras: This phenomenon appears in the works of Jiang-Liu \cite{jiangliu} and Jiang-Li \cite{jiangli}, who completely classified all 
$G$-invariant von Neumann subalgebras in 
$L(G)$ for 
$G=\mathbb{Z}^n\rtimes SL(n,\mathbb{Z})$ with 
$n\geq 2$. 
\end{itemize}

These examples raise a natural and intriguing question:

\begin{question}\label{motivating-question}
For any given integer 
$n\geq 2$, can one construct a countable discrete group 
$G$ that does not satisfy the ISR property, such that the $G$-invariant von Neumann subalgebras in 
$L(G)$ can be completely classified, and the number of such subalgebras not arising from subgroups is exactly 
$n$? 
\end{question}

While it is not hard to construct groups $G$ with $2^n$-many such exotic invariant subalgebras for any $n\geq 1$ using known results (see Remark \ref{remark: construct 2^n many exotic invariant subalgebras}), the general case needs a new construction. In fact,
this question can be viewed as an analogue, in the setting of invariant von Neumann subalgebras, of a classical and important problem in the theory of 
II$_1$ factors: 
\begin{question}
Given any integer 
$n\geq 2$, can one construct a 
II$_1$ factor 
$M$ possessing exactly 
$n$ Cartan subalgebras, or exactly 
$n$ group measure space Cartan subalgebras, up to automorphism or unitary conjugacy? 
\end{question}
This question was largely resolved by Krogager-Vaes \cite{KV}, who constructed, for every positive integer 
$n$, a class of II$_1$ factors 
$M$ that admit exactly 
$n$ group measure space Cartan subalgebras up to conjugacy by an automorphism of $M$. For initial results on studying this problem, we refer the reader to the introduction part of \cite{KV} and the references therein.

In the present paper, we establish an analogous result in the context of invariant von Neumann subalgebras, which provides an affirmative answer to Question \ref{motivating-question}.

\begin{thmstar}
 \phantomsection\label{thmA}
\begin{itemize}
\item For every integer $n \geq 1$, there exists an amenable i.c.c. group $G_n$ such that the II$_1$ factor $L(G_n)$ has exactly $n$ $G_n$-invariant von Neumann subalgebras in $L(G_n)$ not arising from subgroups of $G_n$.
\item\label{thmA2} For every integer $n \geq 1$, there exists a non-amenable i.c.c. group $H_n$ such that the II$_1$ factor $L(H_n)$ has exactly $2n$ $H_n$-invariant von Neumann subalgebras in $L(H_n)$ not arising from subgroups of $H_n$.
\end{itemize}
\end{thmstar}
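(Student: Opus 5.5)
The plan is to realize $G_n$ as a semidirect product $A\rtimes K$ with $A$ abelian, and to reduce the classification of $G$-invariant subalgebras of $L(G)$ to two problems. The first is a dynamical problem: classify the $K$-invariant von Neumann subalgebras of $L(A)\cong L^\infty(\widehat A)$, equivalently the $K$-equivariant measurable factors of the compact dual $\widehat A$. The second is a rigidity problem: show that every $G$-invariant subalgebra is either of the form $L(H)$ or is contained in $L(A)$.

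The choice of data is the following. Fix a prime $p$, and by Dirichlet choose a prime $q\equiv 1 \pmod{p^n}$, writing $q-1=p^n r$ with $p\nmid r$. Let $A=\bigoplus_{\mathbb Z}\mathbb F_q$. Let $K=\mathbb Z\times D$, where $\mathbb Z$ acts by shift and $D\le \mathbb F_q^*$ is the subgroup of order $r$ acting by scalars. Then $G_n=A\rtimes K$ is amenable, and it is i.c.c. because the shift is mixing and $D$ acts freely on $A\setminus\{0\}$.

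The candidate exotic subalgebras are the fixed-point algebras $L(A)^{F}$ for intermediate groups $D\subsetneq F\le \mathbb F_q^*$, where $F$ acts on $A$ by scalars. These algebras are $G$-invariant because $F$ is abelian and commutes with the shift. Since $F/D$ ranges over the nontrivial subgroups of $\mathbb Z/p^n$, there are exactly $n$ of them. None is of the form $L(H)$: such an algebra is spanned by the averages $\sum_{f\in F/D}u_{fa}$, and it contains no group unitary $u_a$ with $a\neq 0$.

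The key steps, in order, are as follows.

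\textbf{Step 1: reduction to $L(A)$.} Let $M\subseteq L(G_n)$ be $G_n$-invariant. Using the character approach of Dudko--Jiang \cite{DJ}, I would show the following dichotomy. If some element of $M$ has a nonzero Fourier coefficient outside $A$, then $M\supseteq L(N)$ for a nontrivial normal subgroup $N\not\le A$. In that case $M=L(H)$ for some $H$, because the normal subgroups of $G_n$ not contained in $A$ contain $A$ (the $K$-action on $A$ is irreducible enough: $\mathbb F_q[\mathbb Z]$-submodules are controlled), and above $A$ everything is a group algebra of $K$-quotients. Otherwise $M\subseteq L(A)$.

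\textbf{Step 2: classifying equivariant factors.} Classify the $K$-invariant subalgebras of $L^\infty(\widehat A)=L^\infty(\mathbb F_q^{\mathbb Z})$ (Bernoulli-type dual). I would show that every such factor is either the $\sigma$-algebra of a closed $K$-invariant subgroup, i.e.\ $L(B)$ for $B\le A$ a $K$-submodule, or a quotient by a finite group of automorphisms commuting with $K$ that preserves $B$. Intersecting with a submodule yields no further exotic examples beyond $L(A)^F$, provided one checks that $L(B)^F$ with $B\neq A$ does not occur. If it does occur, I would change the count by shrinking $A$ to a simple $\mathbb F_q[K]$-module, replacing $\mathbb F_q^{\mathbb Z}$ by a suitable irreducible module over a larger acting group.

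\textbf{Step 3: the non-amenable case.} For $H_n$, I would take an analogous construction with $K$ non-amenable, e.g.\ $A=\mathbb F_q^2$-valued functions over a free group, or $A=\mathbb F_q^{\,2}\rtimes SL_2$-type data. Here the commuting automorphism group becomes $F\times\{\pm1\}$-type, which doubles the count to $2n$.

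The main obstacle is Step 2: proving that every $K$-equivariant factor of $\widehat A$ is algebraic, i.e.\ a quotient by a closed subgroup followed by a finite group of commuting automorphisms. I would first try the rigidity of factors of algebraic actions via characters. Concretely, project $M$ onto the Fourier side. A $K$-invariant subalgebra is determined by the support functions $a\mapsto \tau(x u_a^*)$, and one shows that the set of $a$ appearing, together with the multiplicativity of $M$, forces $M$ to be spanned by orbit sums over a subgroup of $\operatorname{Aut}_K(A)=\mathbb F_q^*$. Step 1 is the second delicate point, since one must rule out mixed elements supported both in $A$ and outside $A$.
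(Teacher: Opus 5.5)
There is a genuine gap, and it is fatal to the construction itself, not only to the write-up. It sits exactly at the step you flag. The module $A=\bigoplus_{\mathbb Z}\mathbb F_q$ is far too reducible, so its dual action (a Bernoulli shift twisted by scalars) has a huge lattice of invariant subalgebras.

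Identify $A$ with $\mathbb F_q[t,t^{-1}]$, with the shift acting as multiplication by $t$. Then each ideal $B_j=(1-t)^jA$, $j\ge 0$, is a $K$-submodule. For every nontrivial $F\le\mathbb F_q^*$ acting by scalars, $L(B_j)^F$ is $G_n$-invariant, because conjugation acts $\mathbb F_q$-linearly and $A$ acts trivially. It contains no $u_b$ with $b\neq 0$, since scalars act freely on $A\setminus\{0\}$, so it does not arise from a subgroup. These algebras are pairwise distinct by their Fourier supports, so $L(G_n)$ has infinitely many exotic invariant subalgebras.

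There are also invariant subalgebras outside the list claimed in Step 2. Examples are the fixed points in $L(A)$ of the compact group $T^{\mathbb Z}$ acting coordinatewise, and of the finite group $\{(s^{(-1)^k})_{k}: s\in T\}$, which the shift normalizes but does not centralize.

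Even among scalar fixed-point algebras your count is wrong. $L(A)^F$ is invariant and exotic for every nontrivial $F\le\mathbb F_q^*$, not only for $F\supsetneq D$. That gives $(n+1)d(r)-1$ of them, where $d$ counts divisors. Your fallback of passing to a simple $\mathbb F_q[K]$-module does not help, because simple modules over $\mathbb F_q[\mathbb Z\times D]$ are finite. Finally, the claim in Step 1 that normal subgroups not contained in $A$ must contain $A$ is false: $(1-t)A\rtimes\langle t\rangle$ is normal.

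The missing idea is an infinite module whose dual action has rigidly classifiable factors. The paper takes $G_n=\mathbf k\rtimes\mathbf k^*$ with $\mathbf k=\mathbb Q(\zeta_{2^n})$. By Bj\"orklund--Fish, $\mathbf k^*\curvearrowright\widehat{\mathbf k}$ is 2-simple, and its centralizer is exactly $\mathbf k^*$, which is countable and hence discrete. Veech's theorem then shows the only proper invariant subalgebras of $L(\mathbf k)$ are $L(\mathbf k)^F$ for finite $F\le\mathbf k^*$. The field is chosen so that $\mathbf k^*$ has exactly $n$ nontrivial finite subgroups.

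Your Step 1 dichotomy is only asserted. In the paper it takes real work: the abelian case (character vanishing plus masas), the subfactor case (the Chifan--Das--Sun splitting plus $G$-invariant characters on $\mathbf k$), and then the key lemma. That lemma says an invariant subalgebra $M$ with $Z(M)=L(\mathbf k)^F$ must be $L(\mathbf k)^F$ or $L(\mathbf k\rtimes F)$. Its coefficient-comparison proof genuinely uses invariance, since many non-invariant intermediate algebras exist.

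Step 3 is not an argument either. The paper obtains $2n$ by taking $G_n\times H$ with $H$ a non-amenable group with two conjugacy classes. Using the ISR property of $H$, Ge--Kadison splitting and character vanishing, it shows the invariant subalgebras are exactly $L(\mathbf k)^F$, $L(\mathbf k)^F\bar{\otimes}L(H)$, $L(N)$ and $L(N\times H)$.
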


In fact, Theorem \ref{thmA}, proved as Corollary \ref{cor: group with n-many exotic invariant vN subalgebras inside L(G)} and Corollary \ref{cor: non-amenable group with n-many exotic invariant vN subalgebras inside L(G)},  follows from a full classification of invariant subalgebras in $L(G_n)$ and $L(H_n)$, see Theorem \ref{thm: invariant vN subalgebras inside L(k rtimes k*)} and Corollary \ref{cor: invariant vN subalgebras in L(G×H)}.

Our construction draws inspiration from the works of \cites{ADJS,jiangliu,jiangli}. Indeed, the group $G_n$ is also constructed to be certain semi-direct product group $A\rtimes \Gamma$, where $A$ is abelian, so that we can explore the algebraic action $\Gamma\curvearrowright(\widehat{A},\text{Haar})$ that it induces. But the translation of their ideas to attack Question \ref{motivating-question} requires several new technical ingredients from ergodic theory, field theory, character theory and arithmetic observations. For example, we expect all exotic invariant subalgebras arise precisely from non-trivial factors of the above algebraic action, hence we do need a complete  classification of all non-trivial factors such that the number of such factors equals a prescribed number. The starting point is to understand \cite[Remark 1.4]{BF}, see Lemma \ref{lem: 2-simple} for a detailed proof, which provides us with a natural candidate, i.e. $\mathbf{k}\rtimes \mathbf{k}^*$ for a suitable chosen field $\mathbf{k}$, to work with. 

However, even with the above mentioned candidate at hand, new challenges arise while trying to classify invariant subalgebras. In particular, we highlight that there exists a new rigid phenomenon, i.e. Lemma \ref{lem: invariant A with Z(A)=L(k)^{F}} that does not seem to arise in all previous works, see Remark \ref{remark: direct integral gives more intermediate ones} for more discussion on this.
We anticipate that the techniques developed here may find further applications in the study of intermediate subalgebras and rigidity phenomena in operator algebras.

The paper is organized as follows: besides this section, there are three more sections. In Section \ref{section: preliminaries}, we collect facts on algebraic actions, 
torsion subgroups of multiplicative groups over fields, joinings and characters. Then we show the full classification of factors for the algebraic action $\mathbf{k}^*\curvearrowright\widehat{\mathbf{k}}$ in Section \ref{section: factors of algebraic actions}. The proof of Theorem \ref{thmA} will be given in Section \ref{section: invariant subalgebras}, see Corollary \ref{cor: group with n-many exotic invariant vN subalgebras inside L(G)} and Corollary \ref{cor: non-amenable group with n-many exotic invariant vN subalgebras inside L(G)}. In fact, both of them follow from the full classification of invariant subalgebras in $L(\mathbf{k}\rtimes \mathbf{k}^*)$ as in Theorem \ref{thm: invariant vN subalgebras inside L(k rtimes k*)}. 

\subsection*{Acknowledgments}
Both authors are supported by the National Natural Science Foundation of China (Grant No. 12471118). Besides, this work was partially supported by the Simons Foundation grant (award no. SFI-MPS-T-Institutes-00010825) and from State Treasury funds as part of a task commissioned by the Minister of Science and Higher Education under the project ``Organization of the Simons Semesters at the Banach Center - New Energies in 2026-2028'' (agreement no. MNiSW/2025/DAP/491). J.Y. thanks Prof. Manfred Einsiedler for a useful email correspondence dating back to 2020. We are grateful to Prof. Tattwamasi Amrutam and Prof. Hanfeng Li for very helpful comments and pointing out several inaccuracies in a previous version.

\section{Preliminaries}\label{section: preliminaries}

In this section, we collect some basic facts on algebraic actions, finite subgroups of multiplicative groups over fields, joinings and characters. All of these will be applied to the semi-direct product group $\mathbf{k}\rtimes \mathbf{k}^*$ and the induced algebraic action $\mathbf{k}^*\curvearrowright\widehat{\mathbf{k}}$, where $\mathbf{k}$ is a suitable chosen field.

\subsection{Algebraic actions}

Let $\Gamma$ be a countable discrete group.
For a compact metrizable abelian group $X$ equipped with the normalized Haar measure $\mu$, an action $\alpha: \Gamma\curvearrowright (X,\mu)$ is called an \emph{algebraic action} if $\alpha: \Gamma\rightarrow Aut(X)$ is a group homomorphism from $\Gamma$ to the continuous automorphism group $Aut(X)$. Note that the Pontryagin dual $\widehat{X}$ inherits a left $\mathbb{Z}\Gamma$-module structure. Conversely, given a countable left $\mathbb{Z}\Gamma$-module $M$, then it induces an algebraic action $\Gamma\curvearrowright\widehat{M}$ defined by $\langle g\chi,m\rangle:=\langle \chi, g^{-1}m\rangle$ for all $g\in \Gamma$, $\chi\in \widehat{M}$ and all $m\in M$, where the compact abelian group $\widehat{M}$ (the Pontryagin dual of $M$)  is equipped with the Haar measure and $\langle \cdot,\cdot\rangle$ denotes the pairing between $\widehat{M}$ and $M$. Hence, for an countable abelian group $A$, the semi-direct product group $G:=A\rtimes \Gamma$ induces an algebraic action $\Gamma\curvearrowright \widehat{A}$. 

In this paper, we will work with $G=\mathbf{k}\rtimes \mathbf{k}^*$, where $\mathbf{k}$ is a suitable chosen infinite field, $\mathbf{k}^*$ denotes the multiplicative group of non-zero elements in $\mathbf{k}$ and $\mathbf{k}^*$ acts on $\mathbf{k}$ by multiplication. Recall that for an algebraic action $\Gamma\curvearrowright (X,\mu)$, it is  mixing if and only if  the stabilizer subgroup of every non-trivial element in $\widehat{X}$ is finite \cite[Proposition 2.35]{KL_book}. Hence the action $\mathbf{k}^*\curvearrowright \widehat{\mathbf{k}}$ is mixing since every non-trivial element in $\widehat{\mathbf{k}}$ has trivial stabilizer subgroup.

For more information on algebraic actions, we refer the readers to 
\cites{Schmidt_book, KL_book}.

\subsection{A fact on multiplicative groups over fields}

To control the number of non-trivial factors for the algebraic action $\mathbf{k}^*\curvearrowright\widehat{\mathbf{k}}$, we will need to control the number of finite subgroups in $\mathbf{k}^*$. Hence we prepare such a fact in the following lemma.

\begin{lemma}\label{lem: k* which has exactly n-many subgroups}
    Let $n\geq 1$. Then there exists a countable discrete field $\mathbf{k}$ of characteristic zero such that there are precisely $n$-many non-trivial finite subgroups inside $\mathbf{k}^*$.
\end{lemma}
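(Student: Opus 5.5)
Finite subgroups of $\mathbf{k}^*$ are cyclic and consist of roots of unity, so they are exactly the groups $\mu_m=\{x\in\mathbf{k}:x^m=1\}$ for those $m$ with $\mu_m\subseteq\mathbf{k}$. Each such $\mu_m$ is contained in the (finite, if the field is chosen well) torsion subgroup $T=\mu(\mathbf{k})$, and the finite subgroups of $\mathbf{k}^*$ are exactly the subgroups of $T$. If $T$ is cyclic of order $N$, the non-trivial ones correspond to the divisors $d>1$ of $N$, so there are $\tau(N)-1$ of them, where $\tau$ is the number of divisors. The plan is therefore to choose $N$ with $\tau(N)=n+1$ and build a countable field of characteristic zero whose group of roots of unity is exactly cyclic of order $N$.

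\textbf{Choice of $N$.} Take a prime $p$ and set $N=p^{n}$, which has $n+1$ divisors $1,p,\dots,p^n$. To make the field-theoretic step clean, I would take $p$ odd, e.g. $p=3$. The case $N=2$ then does not arise. Note that $-1\in\mathbf{k}^*$ always, so in characteristic zero $\{\pm1\}$ is always a subgroup of order $2$. Hence the torsion is never of odd order. I must therefore correct the choice: use $N=2^{n}$, with $\mathbf{k}=\mathbb{Q}(\zeta_{2^n})$. For $n=1$ this is $\mathbb{Q}$, whose torsion is $\{\pm1\}$, giving exactly one non-trivial finite subgroup.

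\textbf{Key step.} I will show that the roots of unity in $K=\mathbb{Q}(\zeta_{2^n})$ are exactly $\mu_{2^n}$.
\begin{enumerate}[(1)]
\item Suppose $\zeta_m\in K$. Then $\mu_{\mathrm{lcm}(m,2^n)}\subseteq K$, so $\mathbb{Q}(\zeta_{L})\subseteq K$ with $L=\mathrm{lcm}(m,2^n)$.
\item Comparing degrees gives $\varphi(L)\le\varphi(2^n)$. Write $L=2^a u$ with $u$ odd and $a\ge n$. Then $\varphi(L)=2^{a-1}\varphi(u)$ when $a\ge1$.
\item If $u>1$ then $\varphi(u)\ge2$, and if $a>n$ then $2^{a-1}>2^{n-1}$. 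Either case gives a strict inequality $\varphi(L)>\varphi(2^n)$, contradicting (2).
\item Hence $L=2^n$ and $m\mid 2^n$.
\end{enumerate}
The edge case $n=1$, where $\varphi(2)=1$, is handled the same way: $\mathbb{Q}$ contains only $\pm1$, since a root of unity of order $m$ has degree $\varphi(m)>1$ for $m>2$.

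\textbf{Conclusion.} $K$ is a countable field of characteristic zero whose torsion is cyclic of order $2^n$. Its finite subgroups are therefore $\mu_{2^j}$ for $j=0,\dots,n$, of which exactly $n$ are non-trivial.

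The only real obstacle is the irreducibility fact $[\mathbb{Q}(\zeta_m):\mathbb{Q}]=\varphi(m)$ (the standard irreducibility of cyclotomic polynomials), which I will quote. Together with the remark that every finite subgroup of a field's multiplicative group is cyclic, this makes the remaining steps routine.
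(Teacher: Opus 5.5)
Your proposal is correct and follows the paper's proof. Like the paper, you take $\mathbf{k}=\mathbb{Q}(\zeta_{2^n})$, use that finite subgroups of $\mathbf{k}^*$ are cyclic groups of roots of unity, and count the $n$ non-trivial subgroups of $\langle\zeta_{2^n}\rangle$; the only differences are that you prove via the degree bound $\varphi(\mathrm{lcm}(m,2^n))\le\varphi(2^n)$ that the roots of unity in $\mathbb{Q}(\zeta_{2^n})$ are exactly $\mu_{2^n}$, where the paper simply cites this, and that you should drop the abandoned odd-$p$ detour from a final write-up.
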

\begin{proof}
We claim that for a suitable choice of $m\in\mathbb{N}$, we have that $\mathbf{k}:=\mathbb{Q}(\zeta_m)$ satisfies the desired property, where $\zeta_m$ denotes the $m$-th primitive root of unity $e^{2\pi i/m}$.

For any $n\geq 1$, we may take $m=2^{n}$.  Since any finite subgroup of $\mathbf{k}^*$ must be a cyclic group consisting of roots of unity \cite[Lemma 6.1]{Mor} and in fact the roots of unity in $\mathbb{Q}(\zeta_m)$ form a cyclic group $\langle \zeta_m\rangle$ of order $m$ for even $m$ (see e.g. \cite[Theorem 4.2]{moor}), we deduce that for $\mathbf{k}=\mathbb{Q}(\zeta_{2^{n}})$, finite subgroups inside $\mathbf{k}^*$ are listed as:
\begin{align*}
    \{1\}, \langle \zeta_2\rangle, \langle \zeta_{2^2}\rangle,\ldots,\langle \zeta_{2^{n}}\rangle.
\end{align*}
Hence it has exactly $n$-many non-trivial finite subgroups.  
\end{proof}
\begin{remark}
 If $n\geq 1$ satisfies that $n+1$ is prime, then there does not exist $m\neq 2^n$ satisfying that for $\mathbf{k}=\mathbb{Q}(\zeta_m)$, $\mathbf{k}^*$ has exactly $n$-many non-trivial finite subgroups. Except for this case, there exists $m\neq 2^n$ such that $\mathbf{k}=\mathbb{Q}(\zeta_m)$ satisfies the desired condition.
\end{remark}
\begin{proof}
    Note that the roots of unity in $\mathbb{Q}(\zeta_m)$ form a cyclic group $\langle \zeta_m\rangle$ of order $m$ for even $m$ and form a cyclic group $\langle \zeta_{2m}\rangle$ of order $2m$ for odd $m$ (see e.g. \cite[Theorem 4.2]{moor}). Thus the number of non-trivial finite subgroups of $\mathbf{k}^*$ is $d(m)-1$ for even $m$ and is $d(2m)-1=2d(m)-1$ for odd $m$, where $d(m)$ denotes the number of divisors of $m$.  
    
    Let $m=p_1^{a_1}\cdots p_r^{a_r}$ be the prime factorization of $m$. Assume that for $\mathbf{k}=\mathbb{Q}(\zeta_m)$, $\mathbf{k}^*$ has exactly $n$-many non-trivial finite subgroups. Then $n+1=d(m)=(a_1+1)\cdots(a_r+1)$ if $m$ is even, and $n+1=2d(m)=2(a_1+1)\cdots(a_r+1)$ if $m$ is odd. Therefore, it is easy to check that $2^n$ is the unique positive integer $m$ such that $\mathbf{k}=\mathbb{Q}(\zeta_m)$ satisfies the desired condition when $n+1$ is prime. Except for this case, we may write $n+1=q_1^{b_1}\cdots q_s^{b_s}$ for its prime factorization, where $s\geq 2$. Thus we may take $m=2^{q_1^{b_1}-1}r_2^{q_2^{b_2}-1}\cdots r_s^{q_s^{b_s}-1}$ to obtain the desired, where $2,r_2,\cdots, r_s$ are distinct prime numbers.
\end{proof}

\subsection{Joining}  
We refer the readers to \cite[Chapter 6]{Gla} for the following definitions and facts.

Let $\alpha_i: G\curvearrowright (X_i,\mu_i)$ ($i=1,2$) be two ergodic p.m.p. actions. Then a probability measure $\mu$ on the product space $X_1\times X_2$ is called a \emph{joining} of $\alpha_1$ and $\alpha_2$ if it is $G$-invariant and $(\pi_i)_*\mu=\mu_i$, where $\pi_i: X_1\times X_2\rightarrow X_i$ denotes the projection onto the $i$-th coordinate. When $\alpha_1=\alpha_2$, we refer to such joinings as \emph{self-joinings}.

Let $\phi: (X_1,\mu_1)\rightarrow (X_2,\mu_2)$ be a $G$-equivariant measure preserving map. Let $gr(\mu_1,\phi)=\int_X(\delta_x\times \delta_{\phi(x)})d\mu_1(x)=(id\times \phi)_*\mu_1$, where $(id\times \phi): X_1\rightarrow X_1\times X_2$ is the map given by $(id\times \phi)(x)=(x,\phi(x))$. Equivalently, $gr(\mu_1,\phi)(A\times B)=\mu_1(A\cap \phi^{-1}B)$. When $\phi$ is an isomorphism of the two ergodic actions, we say that $gr(\mu_1,\phi)$ is an \emph{isomorphism graph joining}. For a weakly mixing p.m.p. action $G\curvearrowright (X,\mu)$ (e.g. $\mathbf{k}^*\curvearrowright \widehat{\mathbf{k}}$), we call it \emph{2-simple} if each ergodic self-joining is either  an isomorphism graph joining of some automorphism  $\phi\in Aut_G(X, \mu)$ or $\mu\times \mu$. Here, $Aut_G(X,\mu)$ is the automorphism group consisting of all $G$-equivariant measure preserving isomorphisms of  $(X,\mu)$. With a Cantor model for $X$, the topology on $Aut_G(X,\mu)$ is given by the metric \[d(\phi, \psi)=\sum_{n=1}^\infty\frac{1}{2^n}\mu(\phi^{-1}A_n\Delta\psi^{-1}A_n),\] where $A_n$ is a list of clopen subsets of $X$. With this topology $Aut_G(X,\mu)$ is a Polish group, and this topology does not depend on the Cantor model chosen for $X$.

A crucial property on 2-simple actions was proved by Veech \cite[Theorem 12.3]{Gla}, showing that every non-trivial factor of a 2-simple p.m.p. action is defined by modding out some compact subgroup of $Aut_G(X,\mu)$.

\subsection{Characters} Let $G$ be a countable discrete group. A map $\phi: G\rightarrow\mathbb{C}$ is called a \emph{character} on $G$ if satisfies the following three conditions:
\begin{itemize}
    \item $\phi$ is \emph{positive definite}, i.e. $\sum_{i=1}^n\sum_{j=1}^n\overline{\alpha_i}\alpha_j\phi(s_i^{-1}s_j)\geq 0$ for all $n\in\mathbb{N}$ and any elements $s_1,\ldots, s_n\in G$ and any $\alpha_1,\ldots,\alpha_n\in\mathbb{C}$.
    \item $\phi$ is \emph{conjugation-invariant}, i.e. $\phi(sts^{-1})=\phi(s)$ for all $s, t\in G$.
    \item $\phi$ is \emph{normalized}, i.e. $\phi(e)=1$.
\end{itemize}
Let $A\subseteq L(G)$ be a $G$-invariant von Neumann subalgebra. Denote by $E: L(G)\rightarrow A$ the canonical trace $\tau$-preserving conditional expectation on $A$.
A basic fact we will use is that the map $\phi: G\rightarrow \mathbb{C}$ defined by $\phi(g):=\tau(v_g^*E(v_g))$ is a character, where $v_g$ denotes the canonical unitary defined by $g$, i.e. $v_g\in \mathcal{U}(\ell^2(G))$ is defined by $v_g(\delta_s)=\delta_{gs}$ for all $s\in G$.
For a proof of this fact, see \cite[Proposition 3.2]{JZ} (Note that characters are instead called traces in \cite{JZ}).  

The other useful fact is the following well-known trick on characters, which has been used in \cites{Bek,DM}. For a proof, see \cite[Lemma 2.7]{DJ}. 

\begin{proposition}\label{prop: vanishing characters}
    Let $G$ be a countable discrete group. Let $\phi$ be a character on $G$. Let $g\in G$. Assume that $\phi(s_n^{-1}s_m)=0$ for all $n\neq m$, where for some $t_n\in G$, we have that $s_n:=t_ngt_n^{-1}$ are infinitely many pairwisely distinct conjugates of $g$. Then $\phi(g)=0$.
\end{proposition}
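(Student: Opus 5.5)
Since $\phi$ is a character, it is positive definite, so the GNS construction gives a unitary representation $\pi$ of $G$ on a Hilbert space $\mathcal{H}$ with a unit cyclic vector $\xi$ satisfying $\phi(h)=\langle \pi(h)\xi,\xi\rangle$ for all $h\in G$. The plan is to turn the hypothesis $\phi(s_n^{-1}s_m)=0$ into orthogonality of the vectors $\eta_n:=\pi(s_n)\xi$, then use conjugation-invariance to write $\phi(g)$ as an inner product against each $\eta_n$, and finally average over $n$ to force it to zero.

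First, for $n\neq m$ we have $\langle \eta_m,\eta_n\rangle=\langle \pi(s_n^{-1}s_m)\xi,\xi\rangle=\phi(s_n^{-1}s_m)=0$, and $\|\eta_n\|=1$. So $(\eta_n)$ is an orthonormal sequence in $\mathcal{H}$; this is the only place the hypothesis on pairwise distinctness is used (it guarantees infinitely many indices).

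Next, conjugation-invariance gives $\phi(g)=\phi(t_ngt_n^{-1})=\phi(s_n)=\langle \eta_n,\xi\rangle$ for every $n$. Hence for each $N$,
\[
N\,\phi(g)=\Big\langle \sum_{n=1}^N\eta_n,\ \xi\Big\rangle,
\]
and Cauchy--Schwarz together with orthonormality yields $N|\phi(g)|\le \|\sum_{n=1}^N\eta_n\|=\sqrt{N}$. Thus $|\phi(g)|\le N^{-1/2}$ for all $N$, so $\phi(g)=0$. Alternatively one can invoke Bessel's inequality: $\sum_n|\langle\xi,\eta_n\rangle|^2\le\|\xi\|^2=1$ with every term equal to $|\phi(g)|^2$, giving the same conclusion.

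There is no serious obstacle here; the only point requiring care is keeping the order of factors straight so that $\langle\eta_m,\eta_n\rangle$ indeed equals $\phi(s_n^{-1}s_m)$ (rather than $\phi(s_m^{-1}s_n)$, which would be the complex conjugate and equally zero anyway, since $\phi(h^{-1})=\overline{\phi(h)}$ for positive definite functions). So the step I would double-check is just the GNS bookkeeping in the first step.
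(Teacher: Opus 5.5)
Your proof is correct and follows the standard argument: the paper gives no proof of its own and cites \cite[Lemma 2.7]{DJ}, where the same route is used, namely that the vectors $\pi(s_n)\xi$ are orthonormal in the GNS space and $\phi(g)=\langle \pi(s_n)\xi,\xi\rangle$ for every $n$, which combined with averaging or Bessel's inequality forces $\phi(g)=0$. One minor correction to your remark: the orthonormality step does not use pairwise distinctness, and distinctness is automatic anyway since $\phi(s_n^{-1}s_m)=0\neq 1=\phi(e)$; the only thing your argument needs is that the index set is infinite.
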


\section{Factors of the algebraic action $\mathbf{k}^*\curvearrowright\widehat{\mathbf{k}}$}\label{section: factors of algebraic actions}

In this section, following \cite[Remark 1.4]{BF}, we classify all factors of $\mathbf{k}^*\curvearrowright \widehat{\mathbf{k}}$ by showing that it is a 2-simple action and hence we can apply Veech's classical theorem mentioned above. 

Let $\mathbf{k}$ be a countably infinite discrete field (not necessarily of characteristic zero) and let $\mathbf{k}^*$ denote the group of all non-zero elements of $\mathbf{k}$ under multiplication. We denote by $\widehat{\mathbf{k}}$ the Pontryagin dual of $(\mathbf{k},+)$. Let $\mathbf{k}^*\curvearrowright(\widehat{\mathbf{k}},m)$ be the natural algebraic action, where $m$ denotes the normalized Haar measure on $\widehat{\mathbf{k}}$.  

The following fact was recorded as \cite[Remark 1.4]{BF}. We give a  proof for completeness. This serves as the starting point for our construction.

\begin{lemma}\label{lem: 2-simple}
The algebraic action $\mathbf{k}^*\curvearrowright\widehat{\mathbf{k}}$ is 2-simple.   
\end{lemma}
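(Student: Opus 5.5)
The plan is to work entirely with Fourier coefficients of a self-joining, using the fact that $\mathbf{k}$ is abelian and $\mathbf{k}^*$ acts on it with very large orbits. Let $\lambda$ be an ergodic self-joining of $\mathbf{k}^*\curvearrowright(\widehat{\mathbf{k}},m)$. Its Fourier transform on $\mathbf{k}\times\mathbf{k}$ is
\[\widehat{\lambda}(a,b)=\int \langle x,a\rangle\langle y,b\rangle\, d\lambda(x,y).\]
It has the following properties.
\begin{itemize}
\item It is positive definite.
\item The marginal conditions give $\widehat{\lambda}(a,0)=\delta_{a,0}$ and $\widehat{\lambda}(0,b)=\delta_{b,0}$.
\item $\mathbf{k}^*$-invariance gives $\widehat{\lambda}(ga,gb)=\widehat{\lambda}(a,b)$.
\end{itemize}
Since the $\mathbf{k}^*$-orbit of $(a,b)$ with $a\neq 0$ is $\{(c,-rc): c\in\mathbf{k}^*\}$, where $r:=-b/a$, there is a function $f$ on $\mathbf{k}^*$ with $\widehat{\lambda}(c,-rc)=f(r)$ for all $c\neq 0$. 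The goal is to show $f(r)\in\{0,1\}$, and that $f(r)=1$ for at most one $r$.

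Fix $r\in\mathbf{k}^*$ and consider in $L^2(\widehat{\mathbf{k}}\times\widehat{\mathbf{k}},\lambda)$ the unitaries $u_c(x,y)=\langle x,c\rangle\langle y,-rc\rangle$ for $c\in\mathbf{k}$. These satisfy $u_cu_{c'}=u_{c+c'}$, so $c\mapsto u_c$ is a unitary representation of $(\mathbf{k},+)$ by multiplication operators. Moreover $\langle u_c,u_{c'}\rangle=\psi(c-c')$, where $\psi(c)=f(r)$ for $c\neq0$ and $\psi(0)=1$. Choose a Følner sequence $F_N$ of finite subsets of the amenable group $(\mathbf{k},+)$, and set $w_N=|F_N|^{-1}\sum_{c\in F_N}u_c$.

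By the mean ergodic theorem, $w_N$ converges in $L^2(\lambda)$ to the projection $w$ of the constant function $1$ onto the $\{u_c\}$-invariant functions. There are two ways to compute the limit.
\begin{itemize}
\item Directly from the inner products, $\|w_N\|^2=|F_N|^{-2}\sum_{c,c'\in F_N}\psi(c-c')\to f(r)$, and $\int w\,d\lambda=\lim\langle w_N,1\rangle=f(r)$.
\item The subspace of $\{u_c\}$-invariant vectors is invariant under the Koopman representation of $\mathbf{k}^*$, because $g\cdot u_c=u_{g^{-1}c}$ up to convention. The constant $1$ is also $\mathbf{k}^*$-invariant, so the projection $w$ is $\mathbf{k}^*$-invariant. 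Ergodicity of $\lambda$ then forces $w$ to be constant, namely $w=f(r)$, hence $\|w\|^2=|f(r)|^2$.
\end{itemize}
Comparing the two computations gives $f(r)=|f(r)|^2$, so $f(r)\in\{0,1\}$.

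If $f\equiv 0$, then $\widehat{\lambda}=\delta_{(0,0)}=\widehat{m\times m}$, so $\lambda=m\times m$. If $f(r)=1$ for some $r$, then $\|u_c-1\|_2^2=2-2\,\mathrm{Re}\, f(r)=0$, so $u_c=1$ $\lambda$-a.e. for every $c$. This means that $\lambda$-a.e. $\langle x,c\rangle=\langle y,rc\rangle$ for all $c$, i.e. $x=\phi_r(y)$, where $\phi_r$ is the dual of multiplication by $r$ on $\mathbf{k}$. This is a continuous automorphism of $\widehat{\mathbf{k}}$ preserving $m$, and it commutes with the $\mathbf{k}^*$-action because $\mathbf{k}^*$ is commutative. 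Hence $\lambda$ is the isomorphism graph joining of $\phi_r^{-1}\in Aut_{\mathbf{k}^*}(\widehat{\mathbf{k}},m)$. For uniqueness, if $f(r)=f(s)=1$, then $\phi_r=\phi_s$ $m$-a.e., hence everywhere by continuity, so $r=s$. (This also follows since $\widehat\lambda$ is already determined.)

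Weak mixing of the action, required in the definition of 2-simple, follows from mixing, which was recorded in Section 2 since nontrivial stabilizers are trivial. The step needing most care is the mean-ergodic argument: one must justify that the limit is the projection onto the invariant vectors, that this projection commutes with the $\mathbf{k}^*$-Koopman operators, and that conventions in $g\cdot u_c$ only permute the family $\{u_c\}$.
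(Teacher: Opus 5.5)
Your proof is correct, and it takes a genuinely different route from the paper.

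\textbf{The paper's route.} The paper does not analyze the joining by hand. It invokes Lemmas 2.4 and 2.8 of Bj\"orklund--Fish to know a priori that every ergodic $\mathbf{k}^*$-invariant joining is the Haar measure $m_{W^{\perp}}$ on the annihilator of some $\mathbf{k}$-linear subspace $W\subseteq\mathbf{k}\oplus\mathbf{k}$. It then runs through the cases $\dim_{\mathbf{k}}W\in\{0,1,2\}$. The marginal conditions exclude the two axes and $W=\mathbf{k}\oplus\mathbf{k}$. For a line $W=\{(c,c\lambda)\}$ with $\lambda\neq0$, it identifies $m_{W^{\perp}}$ as the graph of a dual multiplication map.

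\textbf{Your route.} You prove the needed instance of that measure classification directly. Diagonal invariance makes $\widehat{\lambda}$ constant on each punctured line $\{(c,-rc):c\neq 0\}$. The mean ergodic theorem for the additive action $c\mapsto u_c$, combined with ergodicity of $\lambda$, gives $f(r)=|f(r)|^2$. Hence $\widehat{\lambda}$ is the indicator function either of $\{0\}$ or of a single line, which says exactly that $\lambda=m_{W^{\perp}}$ with $\dim W\le 1$.

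\textbf{Comparison.} The paper's argument is shorter but rests on an external, more general rigidity result. Yours is self-contained and elementary: it uses only uniqueness of Fourier--Stieltjes transforms and von Neumann's mean ergodic theorem for the amenable group $(\mathbf{k},+)$. It also makes the next lemma, $Aut_{\mathbf{k}^*}(\widehat{\mathbf{k}},m)\cong\mathbf{k}^*$, immediate. Indeed, the graph joining of any centralizer element is ergodic, being isomorphic to the original mixing action. So your computation forces that element to be one of your maps $\phi_r^{-1}$, which is the paper's $\phi_r$.

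\textbf{Minor points.} When $b=0$, your $r=-b/a$ equals $0\notin\mathbf{k}^*$. Those pairs are handled by the marginal condition, so $f$ only needs to be defined on $\mathbf{k}^*$; it would be cleaner to say so explicitly. Your uniqueness-of-$r$ step is correct but not needed for 2-simplicity.
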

\begin{proof}
    By definition of 2-simple, we need to check that every ergodic joining $\nu$ of the given action on $\widehat{\mathbf{k}}\times\widehat{\mathbf{k}}$ must be either $m\times m$ or a graph joining of some automorphism, i.e. there exists some $\phi\in\mathit{Aut_{\mathbf{k}^*}}(\widehat{\mathbf{k}},m)$ such that $\nu=(id, \phi)_*m$ via the map $\widehat{\mathbf{k}}\ni x\overset{\text{$(id,\phi)$}}{\mapsto} (x,\phi(x))\in \widehat{\mathbf{k}}\times\widehat{\mathbf{k}}$. 
    
    By Lemma 2.4 and Lemma 2.8 in \cite{BF}, we deduce that $\nu=m_{W^{\perp}}$ for some linear subspace $W\subseteq\mathbf{k}\oplus \mathbf{k}=:V$, where $m_{W^{\perp}}$ denotes the normalized Haar measure on ${W^{\perp}}$. Note that $m \times m$ is the normalized Haar measure on $\widehat{\mathbf{k}\times \mathbf{k}}\cong \widehat{\mathbf{k}}\times\widehat{\mathbf{k}}$ and $m_{W^{\perp}}$ is the restriction of $m \times m$ on subgroup ${W^{\perp}}\subseteq \widehat{\mathbf{k}\times \mathbf{k}}$.

    By definition, \[W^{\perp}=\{v\in \widehat{\mathbf{k}}\times\widehat{\mathbf{k}}: v(x)=1, \forall x \in W\}.\] 

    As a vector subspace, $W$ has dimension 0, 1, or 2.

    \textbf{Case 1.} $dim_{\mathbf{k}}W=0$.

    In this case, $W=\{0\oplus 0\}$. Then $W^{\perp}=\widehat{\mathbf{k}}\times\widehat{\mathbf{k}}$ and thus $\nu=m\times m$. 

    \textbf{Case 2.} $dim_{\mathbf{k}}W=1$.

    Pick any $0\neq(c_1, c_2)\in W$. Then $W=\{(cc_1, cc_2): c\in \mathbf{k}\}$. We may discuss two cases.

    \textbf{Subcase 1.} $c_1=0$ or $c_2=0$. Then $W=0\oplus\mathbf{k}$ or $W=\mathbf{k}\oplus0$. 

    If $W=0\oplus\mathbf{k}$, then $W^{\perp}=\widehat{\mathbf{k}}\times\{1\}$. Thus $\nu=m_{\widehat{\mathbf{k}}\times\{1\}}$, which contradicts to the marginal of $\nu$ is $m$. By symmetry, $W=\mathbf{k}\oplus0$ is also impossible.

    \textbf{Subcase 2.} $c_1c_2\neq0$. 

    We may write $W=\{(c,c\lambda):c\in\mathbf{k}\}$, where $\lambda=\frac{c_2}{c_1}\neq 0$.

     Let us determine $W^{\perp}$. By definition, $v=(v_1,v_2)\in\widehat{\mathbf{k}}\times\widehat{\mathbf{k}}$ lies in $W^{\perp}$ if and only if $v(x)=1$ for all $x\in W$. In other words, \[v_1(c)v_2(c\lambda)=1, \forall c\in \mathbf{k}.\] By definition of $\mathbf{k}^*\curvearrowright\widehat{\mathbf{k}}$, $v_2(c\lambda)=(\frac{1}{\lambda}·v_2)(c)$. Thus $W^{\perp}=\{(v_1,v_2)\in\widehat{\mathbf{k}}\times\widehat{\mathbf{k}} : v_1(\frac{1}{\lambda}v_2)=1\}$.

     Define $\phi(v)(t)=v(ts^{-1})$ for all $t\in \mathbf{k}$ and $v\in \widehat{\mathbf{k}}$, where $s=-\lambda$. Clearly, $\phi$ is $\mathbf{k}^*$-equivariant. Moreover, it is easy to check that $\phi$ is a topological group automorphism of $\widehat{\mathbf{k}}$, which implies that $\phi$ preserves the normalized Haar measure $m$ on $\widehat{\mathbf{k}}$. Thus $\phi\in \mathit{Aut_{\mathbf{k}^*}}(\widehat{\mathbf{k}},m)$. We now prove that $\nu=(id,\phi)_*m$. 

     Consider the following group isomorphism \[\widehat{\mathbf{k}}\ni v \mapsto(v, v_2)\in W^{\perp}\subseteq \widehat{\mathbf{k}}\times\widehat{\mathbf{k}}\] with $v_2$ uniquely determined by the relation that $v(\frac{1}{\lambda}v_2)=1$. Then \[v_2(t)=\frac{1}{v(t\lambda^{-1})}=v(-t\lambda^{-1})=v(t(-\lambda)^{-1})=v(ts^{-1})=\phi(v)(t)\] for all $t\in \mathbf{k}$. Thus the group isomorphism above is precisely $(id,\phi)$ as given.

     Therefore, since any topological group isomorphism of compact groups preserves the normalized Haar measure, we deduce that $\nu=
    m_{W^{\perp}}=(id,\phi)_*m$. This finishes the proof of this Subcase 2.

    \textbf{Case 3.} $dim_{\mathbf{k}}W=2$.

    In this case, $W=V$. Then $W^{\perp}=\{(1,1)\}$, impossible due to the marginal condition.
\end{proof}

\begin{lemma}
The automorphism group of the algebraic action of $\mathbf{k}^*\curvearrowright\widehat{\mathbf{k}}$ is $\mathbf{k}^*$, i.e. we have $Aut_{\mathbf{k}^*}(\widehat{\mathbf{k}},m)\cong\mathbf{k}^*$ as topological groups.
\end{lemma}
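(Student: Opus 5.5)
We will show that $Aut_{\mathbf{k}^*}(\widehat{\mathbf{k}},m)$ is exactly the image of $\mathbf{k}^*$ under the natural action map and that this identification is a homeomorphism, with $\mathbf{k}^*$ carrying the discrete topology. First, the map $\iota:\mathbf{k}^*\to Aut_{\mathbf{k}^*}(\widehat{\mathbf{k}},m)$, $\iota(s)(v)(t)=v(ts^{-1})$, is well defined. This is the map $\phi$ from Subcase 2 of Lemma \ref{lem: 2-simple}: it is a continuous group automorphism of $\widehat{\mathbf{k}}$, hence preserves $m$. It commutes with the $\mathbf{k}^*$-action because $\mathbf{k}^*$ is abelian. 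It is also a homomorphism. It is injective: if $\iota(s)=id$, then $v((s^{-1}-1)t)=v(t)$ for all $v,t$, and since characters separate points of $\mathbf{k}$ this forces $s=1$.

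For surjectivity, take any $\psi\in Aut_{\mathbf{k}^*}(\widehat{\mathbf{k}},m)$ and form the graph joining $\nu=(id,\psi)_*m$. This is a self-joining. It is ergodic because it is isomorphic, as a $\mathbf{k}^*$-system, to $(\widehat{\mathbf{k}},m)$, which is mixing (by \cite[Proposition 2.35]{KL_book}, as noted in the preliminaries) and hence ergodic. The proof of Lemma \ref{lem: 2-simple} shows that every ergodic joining has the form $m_{W^\perp}$ for a subspace $W$. Since $\nu$ is a graph, it is not $m\times m$: otherwise the second coordinate would be both independent of and a function of the first, which is impossible since $m$ is non-atomic. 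So we are in Subcase 2, and $\nu=(id,\iota(s))_*m$ with $s=-\lambda\in\mathbf{k}^*$. Two graph measures $(id,\psi)_*m$ and $(id,\psi')_*m$ coincide only if $\psi=\psi'$ $m$-a.e., because the graph measure determines the conditional distribution of the second coordinate given the first. Hence $\psi=\iota(s)$ in the measure-algebra sense, and $\iota$ is a group isomorphism.

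It remains to show $\iota$ is a homeomorphism onto its image with the Polish topology. This is the main obstacle: we must prove that $\{id\}$ is open, i.e.\ that the Polish topology on $Aut_{\mathbf{k}^*}$ restricted to $\iota(\mathbf{k}^*)$ is discrete. We will use the Koopman representation. The topology on $Aut_G(X,\mu)$ coincides with the weak (equivalently strong) operator topology on the induced unitaries of $L^2(\widehat{\mathbf{k}},m)$. In the character basis $\{e_t\}_{t\in\mathbf{k}}$, where $e_t(v)=v(t)$, the unitary of $\iota(s)$ permutes the basis via $e_t\mapsto e_{st}$, up to the convention for inverses. For $s\neq 1$ we get
\[\langle U_{\iota(s)}e_1,e_1\rangle=\langle e_{s^{\pm1}},e_1\rangle=0.\]
So the weak neighbourhood $\{U:|\langle Ue_1,e_1\rangle-1|<1/2\}$ meets $\iota(\mathbf{k}^*)$ only in $id$. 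Therefore $\iota(\mathbf{k}^*)$ is a countable discrete subgroup, and since $\iota$ is a bijection onto the whole group, $Aut_{\mathbf{k}^*}(\widehat{\mathbf{k}},m)$ is discrete and $\iota$ is a topological isomorphism. If a Cantor-model metric is preferred, we instead take $A_n$ to be the clopen set $\{v:\mathrm{Re}\,v(1)>0\}$, or a clopen approximation of it, and check directly that $\mu(\phi^{-1}A_n\Delta A_n)$ is bounded below uniformly over $\phi\neq id$.
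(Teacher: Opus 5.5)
Your proof is correct. The algebraic part is essentially the paper's argument: injectivity of $s\mapsto\phi_s$, then surjectivity via the graph self-joining $(id,\psi)_*m$, the classification $\nu=m_{W^\perp}$ coming from \cite{BF}, and uniqueness of the graph map. The paper proves that uniqueness by an explicit computation with sets $\phi^{-1}E\times E$. You instead note that the disintegration of a graph measure over the first coordinate is $\delta_{\psi(x)}$, which is equally valid.

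Where you genuinely differ is the topological part.
\begin{itemize}
\item The paper gets discreteness for free. Once $T$ is a bijection, $Aut_{\mathbf{k}^*}(\widehat{\mathbf{k}},m)$ is a countable Polish group (a closed subgroup of $Aut(\widehat{\mathbf{k}},m)$), hence discrete by the Baire category theorem.
\item You identify the topology with the weak operator topology on Koopman unitaries. You then compute $\langle U_{\phi_s}e_1,e_1\rangle=\langle e_{s},e_1\rangle=0$ for $s\neq 1$, so the identity is isolated.
\end{itemize}
Your route is more explicit and does not need Polishness of the centralizer. It even shows that $\{\phi_s\}$ is uniformly discrete inside the full $Aut(\widehat{\mathbf{k}},m)$. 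The paper's route is shorter once countability is known, and it works for any countable closed subgroup without a computation.

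There are two small slips.
\begin{itemize}
\item In the injectivity step, $\phi_s=id$ gives $v((s^{-1}-1)t)=1$, not $v((s^{-1}-1)t)=v(t)$. Since elements of $Aut$ are only defined $m$-a.e., you should also say that a.e. equality upgrades to everywhere equality by continuity and full support of $m$. Alternatively, your Koopman computation already gives injectivity directly.
\item The closing Cantor-model alternative is wrong as stated. First, $\widehat{\mathbf{k}}$ is connected because $\mathbf{k}$ is torsion-free, so it has no nontrivial clopen sets. Second, the set $\{v:\mathrm{Re}\,v(1)>0\}$ is invariant under $\phi_{-1}\colon v\mapsto \bar v$, so no uniform lower bound on $\mu(\phi^{-1}A\,\Delta\, A)$ holds for it. Simply drop that sentence; the Koopman argument already suffices.
\end{itemize}
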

\begin{proof}
    Consider the map $\mathbf{k}^*\ni s \overset{T}{\mapsto}\phi_s\in Aut_{\mathbf{k}^*}(\widehat{\mathbf{k}},m)$, where $\phi_s(v)(t)=v(ts^{-1})$ for all $t\in \mathbf{k}$ and $m$-a.e. $v\in\widehat{\mathbf{k}}$. Clearly, $T$ is an injective group homomorphism.
    
    Let $\phi\in\mathit{Aut_{\mathbf{k}^*}}(\widehat{\mathbf{k}},m)$. We now show that there exists $s\in \mathbf{k}^*$ such that  $\phi=\phi_s$. 
    
    Note that $\phi$ is a measurable conjugacy which commutes with the action $\mathbf{k}^*\curvearrowright\widehat{\mathbf{k}}$. The commuting condition tells us that $\phi(cv)=c\phi(v)$ for all $c\in \mathbf{k}^*$ and $m$-a.e. $v\in\widehat{\mathbf{k}}$. 

    Consider the map $\widehat{\mathbf{k}}\ni v \overset{(id,\phi)}{\mapsto}(v, \phi(v))\in\widehat{\mathbf{k}}\times\widehat{\mathbf{k}}$. This map is $\mathbf{k}^*$-equivariant since $\phi$ commutes with the action $\mathbf{k}^*\curvearrowright\widehat{\mathbf{k}}$. Thus $\nu:=(id,\phi)_*m$ is a $\mathbf{k}^*$-invariant ergodic probability measure on $\widehat{\mathbf{k}}\times\widehat{\mathbf{k}}$. Then we may apply the above mentioned Lemma 2.4 and Lemma 2.8 in \cite{BF} to deduce that $\nu=m_{W^{\perp}}$ for some linear subspace $W\subseteq\mathbf{k}\oplus \mathbf{k}$.

    Clearly, $W$ is dimension one and of the form $W=\{(c,c\lambda):c\in\mathbf{k}\}$ for some $\lambda\in \mathbf{k}^*$. Moreover, the proof of Subcase 2 in Lemma \ref{lem: 2-simple} shows that $\nu=(id,\psi)_*m$, where $\psi=\phi_s$ with $s=-\lambda$.

    Since $(id,\phi)_*m=\nu=(id,\psi)_*m$, $\phi$ and $\psi$ must be the same ($m$-a.e.). Indeed, for any measurable subset $E\subseteq \widehat{\mathbf{k}}$, we have that \[((id,\phi)_*m)(\phi^{-1}E\times E)=m((id,\phi)^{-1}(\phi^{-1}E\times E))=m(\phi^{-1}E\cap\phi^{-1}E)=m(\phi^{-1}E)\] and \[((id,\psi)_*m)(\phi^{-1}E\times E)=m((id,\psi)^{-1}(\phi^{-1}E\times E))=m(\phi^{-1}E\cap\psi^{-1}E).\] Thus $m(\phi^{-1}E\cap\psi^{-1}(E))=m(\phi^{-1}E)$. Similarly, we obtain that $m(\phi^{-1}E\cap\psi^{-1}E)=m(\psi^{-1}E)$. Note that $m(\phi^{-1}E)=m(E)=m(\psi^{-1}E)$ since $\phi$ and $\psi$ are measure-preserving, we deduce that $m(\phi^{-1}E\Delta\psi^{-1}E)=0$ for any measurable subset $E\subseteq \widehat{\mathbf{k}}$. Besides, it is easy to check that \[\{x\in \widehat{\mathbf{k}}: \phi(x)\neq \psi(x)\} \subseteq \bigcup_{n=1}^{\infty}(\phi^{-1}U_n\Delta\psi^{-1}U_n)\] for any countable basis $(U_n)_n$ of $\widehat{\mathbf{k}}$. Hence we have $\phi=\psi$ $m$-a.e.
    
    Thus $\phi$ corresponds to $s=-\lambda\in \mathbf{k}^*$. 
    
    Hence, $T$ is bijective and then $Aut_{\mathbf{k}^*}(\widehat{\mathbf{k}},m)$ is countable. 
    
    Since any countable Polish group must be discrete by Baire's category theorem, we deduce that  $Aut_{\mathbf{k}^*}(\widehat{\mathbf{k}},m)$ is discrete. Thus $T$ is in fact a homeomorphism.
\end{proof}

By Veech's theorem, i.e. \cite[Theorem 12.3]{Gla}, we deduce the key result which will be applied to prove the main theorem. 
\begin{corollary}\label{cor: classifying all factor maps}
    The only non-trivial factors of the algebraic action $\mathbf{k}^*\curvearrowright\widehat{\mathbf{k}}$ arise as quotients by non-trivial finite subgroups of $\mathbf{k}^*$. In other words, the only proper $\mathbf{k}^*$-invariant von Neumann subalgebras in $L(\mathbf{k})$ are of the form $L(\mathbf{k})^{F}$ for some non-trivial finite subgroup $F \subseteq\mathbf{k}^*$, where $L(\mathbf{k})^{F}=\{a\in L(\mathbf{k}):a=\sum_{q\in\mathbf{k}}c_qu_q, c_q=c_{qf}, \forall q\in \mathbf{k}, \forall f \in F\}$, the fixed point von Neumann subalgebras of $L(\mathbf{k})$ under the $F$-action.
\end{corollary}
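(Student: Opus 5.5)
The plan is to combine Lemma \ref{lem: 2-simple}, the preceding lemma identifying $Aut_{\mathbf{k}^*}(\widehat{\mathbf{k}},m)\cong\mathbf{k}^*$ as a discrete group, and Veech's theorem \cite[Theorem 12.3]{Gla}, and then translate the dynamical statement into von Neumann algebra language through the isomorphism $L(\mathbf{k})\cong L^\infty(\widehat{\mathbf{k}},m)$.

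\textbf{Step 1: all factors are quotients by compact subgroups.} The action $\mathbf{k}^*\curvearrowright\widehat{\mathbf{k}}$ is mixing, hence weakly mixing, and it is 2-simple by Lemma \ref{lem: 2-simple}. Veech's theorem then says that every non-trivial factor is the quotient $\widehat{\mathbf{k}}\to\widehat{\mathbf{k}}/K$, meaning the factor given by the $K$-invariant measurable sets, for some compact subgroup $K\le Aut_{\mathbf{k}^*}(\widehat{\mathbf{k}},m)$. This automorphism group is topologically $\mathbf{k}^*$ with the discrete topology. So compact subgroups are exactly the finite subgroups $F\subseteq\mathbf{k}^*$, acting through $\phi_s(v)(t)=v(ts^{-1})$.

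\textbf{Step 2: discard the trivial cases.} The trivial subgroup $F=\{1\}$ gives the identity factor, which is the full algebra. A "non-trivial factor" presumably excludes both the identity and the one-point factor. The one-point factor cannot occur as $\widehat{\mathbf{k}}/F$ for finite $F$, since the quotient of an infinite diffuse space by a finite group is not trivial. Proper non-trivial factors therefore correspond to non-trivial finite $F$. I would also check that distinct $F$ give distinct factors, although this is not strictly needed for the stated corollary.

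\textbf{Step 3: dictionary with subalgebras.} Under the Fourier identification $u_q\leftrightarrow$ the function $v\mapsto v(q)$, the $\mathbf{k}^*$-action on $\widehat{\mathbf{k}}$ corresponds to conjugation $u_q\mapsto u_{cq}$ on $L(\mathbf{k})$. Since we work with standard probability spaces, $\mathbf{k}^*$-invariant von Neumann subalgebras of $L^\infty(\widehat{\mathbf{k}})$ correspond bijectively to factors. The automorphism $\phi_s$ dualizes to $u_q\mapsto u_{qs^{\pm1}}$; a direct check is
\[
(f_q\circ\phi_s)(v)=v(qs^{-1}).
\]
Hence the subalgebra of the factor $\widehat{\mathbf{k}}/F$, namely $L^\infty(\widehat{\mathbf{k}})^F$, is the fixed-point algebra $L(\mathbf{k})^F$. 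Computing the fixed points on Fourier coefficients gives the condition $c_q=c_{qf}$ for all $f\in F$, since the $F$-action just permutes the basis $\{u_q\}$.

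\textbf{Main obstacle.} The only delicate point is making sure the form of Veech's theorem applies: it needs an ergodic, weakly mixing, 2-simple action, with the factor given exactly by the $K$-invariant $\sigma$-algebra. The rest, including identifying compact subgroups of a discrete group with finite ones and the Fourier bookkeeping, is routine.
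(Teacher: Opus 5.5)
Your proposal is correct and follows the paper's route: 2-simplicity (Lemma \ref{lem: 2-simple}), the identification $Aut_{\mathbf{k}^*}(\widehat{\mathbf{k}},m)\cong\mathbf{k}^*$ as a discrete group (so its compact subgroups are exactly the finite subgroups), and Veech's theorem \cite[Theorem 12.3]{Gla}. The paper states the corollary as an immediate consequence of these three facts. Your Steps 2--3 spell out the routine translation to the fixed-point algebras $L(\mathbf{k})^F$, with $\mathbb{C}$, the one-point factor, correctly excluded as the trivial case.
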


\section{Invariant subalgebras inside $L(\mathbf{k}\rtimes \mathbf{k}^*)$}\label{section: invariant subalgebras}

Throughout this section, we fix  $\mathbf{k}$ to be a countable discrete field of characteristic zero. Set $E:=\{x\in\mathbf{k}: \text{$x$ is algebraic over $\mathbb{Q}$}\}$, let $\iota: E\hookrightarrow \mathbb{C}$ be an embedding of $E$ into $\mathbb{C}$. By replacing $E$ with $\iota(E)$, we may assume that $E\subseteq \mathbb{C}$. Note that for any $x\in \mathbf{k}$ with $x^m=1$ for some $m\geq 1$, $x$ is algebraic over $\mathbb{Q}$, thus $x\in \mathbb{C}$. Hence for any finite subgroup $F\subseteq\mathbf{k}^*$, we have that $F\subseteq \mathbb{C}$. Thus \cite[Lemma 6.1]{Mor} yields that $F$ is a cyclic subgroup generated by the primitive $m$-th root of unity $e^{2\pi i/m}$, where $m$ is the order of $F$.

For elements in $\mathbf{k}$, we usually  write them as $p,q$, while elements in $\mathbf{k}^*$ would be denoted by $r,s$.
We write $u_q$  (respectively $v_r$) for canonical unitaries in $L(\mathbf{k})$ (respectively, $L(\mathbf{k}^*)$), where $q\in\mathbf{k}$ and $r\in\mathbf{k}^*$,
so $v_ru_qv_r^*=u_{rq}$, $u_q^*=u_{-q}$ and $v_r^*=v_{1/r}$.

\begin{proposition}\label{prop: masas}
$L(\mathbf{k})$ and $L(\mathbf{k}^*)$ are maximal abelian von Neumann subalgebras (masas for short) in $L(\mathbf{k}\rtimes\mathbf{k}^*)$.  
\end{proposition}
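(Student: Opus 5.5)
The plan is to use the standard criterion for group subalgebras: if $H\leq G$ is abelian, then $L(H)$ is a masa in $L(G)$ precisely when $H$ is maximal abelian in $G$ and, for every $g\in G\setminus H$, the $H$-conjugacy class $\{hgh^{-1}:h\in H\}$ is infinite. I will recall why this criterion holds. Take $x=\sum_{g}x_g u_g\in L(H)'\cap L(G)$ with $\sum_g|x_g|^2<\infty$. Commuting with every $u_h$ forces $x_{hgh^{-1}}=x_g$, so the coefficient function is constant on $H$-conjugacy classes. Square summability then kills $x_g$ on every infinite $H$-orbit. If all $H$-orbits outside $H$ are infinite, $x$ is supported on $H$, which gives $x\in L(H)$. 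Here $G=\mathbf{k}\rtimes\mathbf{k}^*$, and I write a general element as $(q,r)$, which corresponds to $u_qv_r$.

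For $L(\mathbf{k})$, conjugate $u_qv_r$ by $u_p$:
\[u_p(u_qv_r)u_p^*=u_{q+p}v_ru_{-p}=u_{q+p-rp}v_r=u_{q+(1-r)p}v_r.\]
When $r\neq1$, the map $p\mapsto(1-r)p$ is injective on the infinite field $\mathbf{k}$. So each element with $r\neq1$, i.e. each element outside $\mathbf{k}$, has an infinite $\mathbf{k}$-conjugacy class. Since $\mathbf{k}$ is abelian, the criterion shows $L(\mathbf{k})$ is a masa.

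For $L(\mathbf{k}^*)$, conjugate $u_qv_r$ by $v_s$:
\[v_s(u_qv_r)v_s^*=u_{sq}v_r.\]
When $q\neq0$, the set $\{sq:s\in\mathbf{k}^*\}=\mathbf{k}^*$ is infinite. So every element outside $\mathbf{k}^*$ has an infinite $\mathbf{k}^*$-orbit, and $L(\mathbf{k}^*)$ is a masa by the same argument.

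There is no real obstacle here. The one point that needs care is the Fourier-coefficient argument, namely that constancy on infinite orbits together with $\ell^2$-summability forces vanishing. That is routine, and I would write it out briefly in the proof instead of citing it.
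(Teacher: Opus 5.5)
Your proposal is correct and is essentially the paper's argument. The paper cites \cite[Lemma 3.3.1]{masa_book} for the criterion that $L(H)$ is a masa when every $g\in G\setminus H$ has infinite $H$-conjugacy class, asserts this condition is easy to check for $H=\mathbf{k}$ and $H=\mathbf{k}^*$, and your conjugation formulas $u_p(u_qv_r)u_p^*=u_{q+(1-r)p}v_r$ and $v_s(u_qv_r)v_s^*=u_{sq}v_r$ carry out exactly that check. The only difference is that you prove the Fourier-coefficient criterion directly instead of citing it, and the ``maximal abelian'' clause in your statement of the criterion is redundant, since the infinite-orbit condition already implies it.
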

\begin{proof}
This follows from \cite[Lemma 3.3.1]{masa_book}. Indeed, it is easy to check that for $H=\mathbf{k}$ or $\mathbf{k}^*$, we have that $\{hgh^{-1}: h\in H\}$ has infinite cardinality for each $g\in G\setminus H$, where $G=\mathbf{k}\rtimes \mathbf{k}^*$.  
\end{proof}

\begin{lemma}\label{lem: (L(k)^F)' cap L(G) = L(k) rtimes F}
    $[L(\mathbf{k})^F]'\cap L(\mathbf{k}\rtimes \mathbf{k}^*)=L(\mathbf{k})\rtimes F$, where $F$ is any finite subgroup of $\mathbf{k}^*$.
\end{lemma}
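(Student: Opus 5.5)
Here $L(\mathbf{k})\rtimes F$ is read as the von Neumann subalgebra of $L(\mathbf{k}\rtimes\mathbf{k}^*)$ generated by $L(\mathbf{k})$ and $\{v_r: r\in F\}$, that is, $L(\mathbf{k}\rtimes F)$. The plan is to prove the two inclusions separately, using Fourier expansions with respect to the crossed product decomposition $L(\mathbf{k}\rtimes\mathbf{k}^*)=L(\mathbf{k})\rtimes\mathbf{k}^*$.

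\emph{The inclusion $\supseteq$.} $L(\mathbf{k})$ is abelian, so it commutes with $L(\mathbf{k})^F$. For $r\in F$ and $a=\sum_q c_qu_q\in L(\mathbf{k})^F$ we have $v_rav_r^*=\sum_q c_q u_{rq}=\sum_q c_{r^{-1}q}u_q=a$, since $c_{r^{-1}q}=c_q$ by $F$-invariance of the coefficients. Hence $v_r$ commutes with $L(\mathbf{k})^F$ as well.

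\emph{The inclusion $\subseteq$.} Take $x\in[L(\mathbf{k})^F]'\cap L(\mathbf{k}\rtimes\mathbf{k}^*)$ and write its $\ell^2$-expansion $x=\sum_{r\in\mathbf{k}^*}x_rv_r$ with $x_r\in L(\mathbf{k})$. For $a\in L(\mathbf{k})^F$, commutation gives
\[
xa=\sum_r x_r\,\sigma_r(a)\,v_r \quad\text{and}\quad ax=\sum_r a\,x_r\,v_r,
\]
where $\sigma_r(a)=v_rav_r^*$. By uniqueness of the coefficients and commutativity of $L(\mathbf{k})$, this yields $x_r(\sigma_r(a)-a)=0$ for every $r$ and every $a\in L(\mathbf{k})^F$. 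It therefore suffices to show that $x_r=0$ whenever $r\notin F$.

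\emph{The key step (main obstacle).} For $r\notin F$, I want to find elements $a\in L(\mathbf{k})^F$ such that the supports of $\sigma_r(a)-a$ have joins equal to $1$. Work in the dual picture: $L(\mathbf{k})\cong L^\infty(\widehat{\mathbf{k}})$, and $\sigma_r$ corresponds to the automorphism $\chi\mapsto r\cdot\chi$. The relation $x_r(\sigma_r(a)-a)=0$ for all $a$ in the invariant algebra $L^\infty(\widehat{\mathbf{k}})^F$ forces $x_r$ to be supported on the set
\[
\{\chi : r\chi\in F\chi\}=\bigcup_{f\in F}\{\chi: (r-f)\chi=1\}.
\]
This needs a separation argument: $L^\infty(\widehat{\mathbf{k}})^F$ separates $F$-orbits almost everywhere, which holds because $F$ is finite and so the quotient by $F$ is a standard Borel space. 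Next, for $r\neq f$, the set $\{\chi:(r-f)\chi=1\}$ is a single point: since $r-f\in\mathbf{k}^*$ acts as an automorphism of $\widehat{\mathbf{k}}$, the only solution is $\chi=1$. A single point has Haar measure zero because $\widehat{\mathbf{k}}$ is infinite. Hence $x_r=0$ for $r\notin F$.

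A purely algebraic alternative avoids measure theory. Take $a=\sum_{f\in F}u_{fq}$ for each $q\neq 0$. Then $\sigma_r(a)-a$ is a difference of two sums of distinct unitaries over disjoint index sets, because $rFq\neq Fq$ when $r\notin F$. Showing that $x_r=0$ then becomes an argument that $x_r$ times such elements vanishing for all $q$ forces $x_r=0$; one can try this via Fourier coefficients and an infinite-support/mixing estimate, sending $q\to\infty$ and using $\tau(x_r^*x_r\,\cdot)$. I expect the measure-theoretic route to be cleaner, and I would present that one.

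With $x_r=0$ for all $r\notin F$, we get $x=\sum_{r\in F}x_rv_r\in L(\mathbf{k})\rtimes F$, which completes the inclusion $\subseteq$.
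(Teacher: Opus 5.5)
Your argument is correct, but for the inclusion $\subseteq$ it follows a different route from the paper's main proof. Both start from $x_r(\sigma_r(a)-a)=0$.

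The paper then uses only the single test element $\xi=\sum_{f\in F}u_f\in\mathbb{C}[\mathbf{k}]$. It notes that $\sigma_r(\xi)-\xi\neq 0$ for $r\notin F$, and invokes Linnell's theorem: such a nonzero group-ring element is not a zero divisor in $L(\mathbf{k})$, because $\mathbf{k}$ is torsion-free and elementary amenable. This is very short, but it rests on an external result.

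Your measure-theoretic route is essentially the alternative dynamical proof recorded in the remark right after the lemma. Both hinge on the same fact: for $r\notin F$, the only character with $r\chi\in F\chi$ is the trivial one, which is a Haar-null point. The remark passes to $Y=\widehat{\mathbf{k}}/F$ and applies the faithful conditional expectation onto $L^\infty(Y)$ to $x_r^*x_r$. It then uses essential freeness of $r$ on $Y$ to produce $B$ with $rB\cap B=\emptyset$. You stay on $\widehat{\mathbf{k}}$ and localize the support of $x_r$ directly, which is shorter and avoids the conditional expectation.

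To make your separation step precise, argue as follows:
\begin{itemize}
\item fix countably many continuous $F$-invariant functions $a_n$ separating $F$-orbits (available since $\widehat{\mathbf{k}}/F$ is compact metrizable);
\item use $x_r(\sigma_r(a_n)-a_n)=0$ almost everywhere for each $n$;
\item intersect the resulting countably many conull sets.
\end{itemize}

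Your sketched algebraic alternative also closes completely, and it is the most elementary option. Take $a_q=\sum_{f\in F}u_{fq}$. Since $rFq$ and $Fq$ are disjoint, $(\sigma_r(a_q)-a_q)^*(\sigma_r(a_q)-a_q)=2|F|+\sum_c\pm u_{cq}$, where $c$ runs over a fixed finite list of nonzero elements independent of $q$. Hence $0=\tau\bigl(x_r^*x_r(\sigma_r(a_q)-a_q)^*(\sigma_r(a_q)-a_q)\bigr)\to 2|F|\,\tau(x_r^*x_r)$ as $q\to\infty$. The limit holds because the Fourier coefficients of $x_r^*x_r\in\ell^2(\mathbf{k})$ at $-cq$ tend to $0$. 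This forces $x_r=0$ without Linnell's theorem or any disintegration.
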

\begin{proof}
$\supseteq$: Note that $L(\mathbf{k})^{F}=\{{\sum_{f\in F}u_{qf}:q\in \mathbf{k}\}}''$ and $L(\mathbf{k})\rtimes F=L(\mathbf{k}\rtimes F)=\{u_p, v_r: p\in \mathbf{k}, r\in F\}''$. Thus we only need to show that $v_r(\sum_{f\in F}u_{qf})=(\sum_{f\in F}u_{qf})v_r$ for any $q\in \mathbf{k}$ and $r\in F$.

Since $r\in F$, a calculation shows that \[v_r(\sum_{f\in F}u_{qf})=(\sum_{f\in F}u_{qrf})v_r=(\sum_{f\in F}u_{qf})v_r.\]

$\subseteq$: We thank Prof. Hanfeng Li for pointing out the following concise proof. 

Take any $a\in [L(\mathbf{k})^F]'\cap L(\mathbf{k}\rtimes \mathbf{k}^*)=[L(\mathbf{k})^F]'\cap [L(\mathbf{k})\rtimes \mathbf{k}^*]$. Write $a=\sum_{s\in \mathbf{k}^*}f_sv_s$ for its Fourier expansion. where $f_s\in L(\mathbf{k})\subseteq \ell^2(\mathbf{k})$ and $v_s=\sigma_s\otimes \lambda_s\in B(\ell^2(\mathbf{k}
)\otimes \ell^2(\mathbf{k}^*))$. We need to prove that $f_s=0$ if $s\notin F$. 

For any $\xi \in L(\mathbf{k})^F$, we may compute the Fourier expansion of both sides of the identity $a\xi=\xi a$: \[a\xi =(\sum_{s\in \mathbf{k}^*}f_sv_s)\xi=\sum_{s\in \mathbf{k}^*}(f_s\sigma_s(\xi))v_s,\] and \[\xi a=\sum_{s\in \mathbf{k}^*}(\xi f_s)v_s.\]
Thus, comparing the coefficients of $v_s$, we obtain that $f_s(\sigma_s(\xi)-\xi)=0$.

Note that $\mathbf{k}$ is certainly a torsion-free abelian and hence elementary amenable group. Moreover, for $\xi:=\sum_{f\in F}u_f\in \mathbb{C}[\mathbf{k}]\cap L(\mathbf{k})^F$, we have $\sigma_s(\xi)-\xi\neq 0$ for all $s\not\in F$. Thus, by \cite[Theorem 2]{Lin}, we obtain the desired result.
\end{proof}

\begin{remark} Below is an alternative dynamical proof of ``$\subseteq$'' part of the lemma above, which was placed in the main text in a previous version.
\begin{proof}
    Let $\pi:X\rightarrow (Y,\nu)$ be the factor map corresponding to the $\mathbf{k}^*$-invariant von Neumann subalgebra $L(\mathbf{k})^F$, where $X=\widehat{\mathbf{k}}$. Then the claim is equivalent to $L^\infty(Y)'\cap(L^\infty(X)\rtimes \mathbf{k}^*)\subseteq L^\infty(X)\rtimes F$. 

Let $E: L^\infty(X,m)\rightarrow L^\infty(Y,\nu)$ be the conditional expectation. Then $E(f)(y)=\int_Xf(x)\,d\mu_y(x)$, where $f\in L^\infty(X,m)$ and $m=\int_Y\mu_y\,d\nu(y)$ is the measure decomposition with respect to $\pi$. Note that $E$ satisfies $E|_{L^\infty(Y,\nu)}=id$ and $E(f\xi)=E(f)\xi$ for all $f\in L^\infty(X,m)$, $\xi \in L^\infty(Y,\nu)$. Note that $E$ is also faithful, i.e. if $f\geq0$ in $L^\infty(X,m)$ and $E(f)=0$, then $f=0$. Indeed, $E(f)=0$ implies that $\int_Xf(x)\,d\mu_y(x)=0$ for $\nu$-a.e. $y\in Y$. Since $f(x)\geq0$ for $m$-a.e. $x\in X$. We deduce that $f(x)=0$ for $\mu_y$-a.e. $x\in X$ and $\nu$-a.e. $y\in Y$. Then $m(\{x:f(x)\neq0\})=\int_Y \mu_y(\{x:f(x)\neq0\})\,d\nu(y)=0$, which implies $f=0$. 

Take any $a\in L^\infty(Y)'\cap(L^\infty(X)\rtimes \mathbf{k}^*)$. Write $a=\sum_{s\in \mathbf{k}^*}f_sv_s$ for its Fourier expansion, where $f_s\in L^\infty(X,m)$ and $v_s=\sigma_s\otimes \lambda_s\in B(L^2(X,m)\otimes \ell^2(\mathbf{k}^*))$. We need to prove that $f_s=0$ if $s\notin F$. 

For any $\xi \in L^\infty(Y,\nu)$, the same argument as in the main text shows that $f_s(\sigma_s(\xi)-\xi)=0$, hence $f_s^*f_s(\sigma_s(\xi)-\xi)=0$. Taking $E$ on both sides, we deduce that $0=E(f_s^*f_s(\sigma_s(\xi)-\xi))=E(f_s^*f_s)(\sigma_s(\xi)-\xi)$ for all $s\in \mathbf{k}^*$ and $\xi \in L^\infty(Y,\nu)$. 

Since $s\notin F$, we may check that $s$ acts essentially freely on $(Y,\nu)$, i.e. the set $\{y\in Y: sy=y\}$ of points fixed by $s$ is of measure zero. Indeed, for any $x\in X$, let $[x]$ denote the equivalence class of $x$ in $Y$. If $s[x]=[x]$, then $sx=rx$ for some $r\in F$, which implies $x(s^{-1}q)=x(r^{-1}q)$ for all $q\in \mathbf{k}$. Hence $x((s^{-1}-r^{-1})q)=1$ for all $q\in \mathbf{k}$. Since $s\notin F$, we deduce that $\{(s^{-1}-r^{-1})q:q\in \mathbf{k}\}=\mathbf{k}$, thus $x\equiv1$. Therefore, there is only one point $[1]\in Y$ which is fixed by $s$. Note that $\pi^{-1}\{[1]\}=\{1\}\subseteq\widehat{\mathbf{k}}$,  thus $\nu\{[1]\}=\pi_*m\{[1]\}=m\{1\}$. Since $\mathbf{k}$ is an infinite discrete field, the Haar measure $m$ on $\widehat{\mathbf{k}}$ is non-atomic (otherwise $\widehat{\mathbf{k}}$ is discrete and then $\mathbf{k}$ is compact, contradicting to $\mathbf{k}$ is infinite discrete), which implies $m\{1\}=0$. Thus the set $\{[1]\}$ of points fixed by $s$ is of measure zero. 

Hence, we get that $E(f_s^*f_s)=0$. Indeed, otherwise, we have $\nu(U)>0$ for $U=\{y\in Y: E(f_s^*f_s)(y)\neq0\}$. By \cite[Proposition A.22]{KL_book}, there exists a measurable subset $B\subseteq U$ such that $\nu(B)>0$ and $sB\cap B=\varnothing$. Let $\xi=\chi_B$. Then for any $y\in B$, $E(f_s^*f_s)(y)(\sigma_s(\xi)-\xi)(y)=E(f_s^*f_s)(y)(\chi_B(s^{-1}y)-\chi_B(y))=-E(f_s^*f_s)(y)\neq0$, contradicting to $E(f_s^*f_s(\sigma_s(\xi)-\xi))=0$.

Since $E$ is faithful, we deduce that $f_s^*f_s=0$, thus $f_s=0$. This finishes the proof.
\end{proof}
\end{remark}

\begin{lemma}\label{lem: invariant A contains L(k)}
    Assume that $A$ is a $\mathbf{k}\rtimes \mathbf{k}^*$-invariant von Neumann subalgebra such that $L(\mathbf{k})\subseteq A\subseteq L(\mathbf{k}\rtimes \mathbf{k}^*)$ and the center $Z(A)=L(\mathbf{k})^F$ for a non-trivial finite (cyclic) subgroup $F\subseteq\mathbf{k}^*$. Then $A=L(\mathbf{k}\rtimes F)$.
\end{lemma}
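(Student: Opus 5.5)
We show the two inclusions $A\subseteq L(\mathbf{k}\rtimes F)$ and $L(\mathbf{k}\rtimes F)\subseteq A$; the second is the real work.

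\emph{Easy inclusion.} Since $L(\mathbf{k})^F=Z(A)\subseteq A$, every element of $A$ commutes with $L(\mathbf{k})^F$. Hence
\[
A\subseteq [L(\mathbf{k})^F]'\cap L(\mathbf{k}\rtimes\mathbf{k}^*)=L(\mathbf{k})\rtimes F=L(\mathbf{k}\rtimes F)
\]
by Lemma \ref{lem: (L(k)^F)' cap L(G) = L(k) rtimes F}.

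\emph{Reverse inclusion: reduction.} Since $L(\mathbf{k})\subseteq A$, every $a\in L(\mathbf{k}\rtimes F)$ has a unique expansion $a=\sum_{r\in F}f_rv_r$ with $f_r\in L(\mathbf{k})$. The subspace $L(\mathbf{k})v_r$ is the image of the normal projection $x\mapsto E_{L(\mathbf{k})}(xv_r^*)v_r$. As $A\supseteq L(\mathbf{k})$, the trace-preserving conditional expectation $E_A$ is $L(\mathbf{k})$-bimodular. It also commutes with conjugation by $v_s$ for $s\in\mathbf{k}^*$ and by $u_q$ for $q\in\mathbf{k}$, by invariance of $A$. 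So $A$ is an $L(\mathbf{k})$-bimodule of the form $\bigoplus_{r\in F_0}L(\mathbf{k})v_r$ for some set $F_0\subseteq F$, provided we know that for each $r$ either $A\cap L(\mathbf{k})v_r$ is all of $L(\mathbf{k})v_r$ or $\{0\}$.

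To get this dichotomy:
\begin{itemize}
\item The space $\{f\in L(\mathbf{k}): fv_r\in A\}$ is a weakly closed ideal of $L(\mathbf{k})$, since $u_q fv_r\in A$ and $fv_r u_q\in A$.
\item So it equals $L(\mathbf{k})p_r$ for a projection $p_r$.
\item Conjugating $fv_r$ by $v_s$ gives $\sigma_s(f)v_r$, because $\mathbf{k}^*$ is abelian. Hence $p_r$ is $\mathbf{k}^*$-invariant.
\item The action $\mathbf{k}^*\curvearrowright\widehat{\mathbf{k}}$ is ergodic (indeed mixing), so $p_r\in\{0,1\}$.
\end{itemize}
We also need $E_A(v_r)\in L(\mathbf{k})v_r$. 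This holds because $E_A$ commutes with $\mathrm{Ad}\,u_q$, and $\mathrm{Ad}\,u_q$ preserves each summand $L(\mathbf{k})v_r$ of $L(\mathbf{k}\rtimes F)$. Therefore $A=\bigoplus_{r\in F_0}L(\mathbf{k})v_r$ with $F_0$ a subgroup of $F$, i.e. $A=L(\mathbf{k}\rtimes F_0)$.

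\emph{Reverse inclusion: identifying $F_0$.} The centre of $L(\mathbf{k}\rtimes F_0)$ is contained in
\[
L(\mathbf{k}\rtimes F_0)\cap [L(\mathbf{k})]'=L(\mathbf{k}),
\]
since $L(\mathbf{k})$ is a masa by Proposition \ref{prop: masas}. It therefore equals the set of elements of $L(\mathbf{k})$ commuting with each $v_r$, $r\in F_0$, which is $L(\mathbf{k})^{F_0}$. So $L(\mathbf{k})^{F_0}=L(\mathbf{k})^F$.

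Now look at the Fourier coefficients. An element of $L(\mathbf{k})^{F}$ has coefficients constant on $F$-orbits in $\mathbf{k}$. For $q\neq0$, the $F$-orbit of $q$ has $|F|$ points and the $F_0$-orbit has $|F_0|$ points. Taking $\sum_{f\in F_0}u_{f}\in L(\mathbf{k})^{F_0}$, which must lie in $L(\mathbf{k})^F$, forces $F_0=F$. Hence $A=L(\mathbf{k}\rtimes F)$.

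\emph{Main obstacle.} The delicate step is the dichotomy that $A\cap L(\mathbf{k})v_r$ is either $0$ or all of $L(\mathbf{k})v_r$. I would handle it as above: the set of admissible coefficients is a weakly closed $\mathbf{k}^*$-invariant ideal of $L(\mathbf{k})$, and ergodicity then forces it to be trivial or everything.
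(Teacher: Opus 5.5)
Your strategy works and differs from the paper's, but the weight sits on a step you treat as routine. The step you call the main obstacle is actually the easy one. The claim $E_A(v_r)\in L(\mathbf{k})v_r$ does not follow from ``$E_A$ commutes with $\mathrm{Ad}\,u_q$ and $\mathrm{Ad}\,u_q$ preserves each summand''. Equivariance alone cannot separate the summands: for instance, if $|F|\geq 3$ and $r\neq s$ lie in $F\setminus\{1\}$, the map $fv_r\mapsto\sigma_t(f)v_s$, with $t=(1-s)/(1-r)$ and $\sigma_t(u_p)=u_{tp}$, intertwines $\mathrm{Ad}\,u_q$ on $L(\mathbf{k})v_r$ and on $L(\mathbf{k})v_s$. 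Moreover, commuting with $\mathrm{Ad}\,u_q$ is automatic here, since $u_q\in A$.

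The missing argument uses the left $L(\mathbf{k})$-modularity that you stated but did not use at this point.
\begin{itemize}
\item From $u_qv_ru_q^*=u_{(1-r)q}v_r$ you get $u_qE_A(v_r)u_q^*=u_{(1-r)q}E_A(v_r)$.
\item Write $E_A(v_r)=\sum_{s\in F}g_sv_s$ and compare $v_s$-components. This gives $g_s=g_su_{(s-r)q}$ for all $q\in\mathbf{k}$.
\item For $s\neq r$ this means the Fourier coefficients of $g_s$ are invariant under every translation of the infinite group $\mathbf{k}$, so $g_s=0$.
\item Now take $a=\sum_rf_rv_r\in A$. Comparing components of $a=E_A(a)=\sum_rf_rg_rv_r$ gives $f_r=f_rg_r$. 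Hence $f_rv_r=f_rE_A(v_r)=E_A(f_rv_r)\in A$.
\end{itemize}
This is exactly the splitting $A=\bigoplus_r\bigl(A\cap L(\mathbf{k})v_r\bigr)$ you need. With this repair, the rest of your argument is correct: the ideal $L(\mathbf{k})p_r$, its $\mathbf{k}^*$-invariance, ergodicity, and the centre computation with $\sum_{f\in F_0}u_f$.

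The paper uses the two symmetries the other way round. It first places $E(v_{z^k})$ in the finite-dimensional algebra $L(F)$, using $\mathrm{Ad}\,v_s$-invariance and the fact that $L(\mathbf{k}^*)$ is a masa. It then compares the Fourier coefficients of $u_hE(v_{z^k})u_h^*$ computed directly with those obtained via $E\circ E=E$ and $L(\mathbf{k})$-modularity. This yields $c^{(k)}_\ell c^{(\ell)}_{\ell'}=0$ for $\ell\neq\ell'$ and $c^{(k)}_i=c^{(k)}_ic^{(i)}_i$, hence $E(v_{z^\ell})\in\{0,v_{z^\ell}\}$. The final centre argument is the same as yours.

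Your route localizes $E_A(v_r)$ in the $r$-th summand via $\mathrm{Ad}\,u_q$ and gets the $0$/$1$ dichotomy from ergodicity of $\mathbf{k}^*\curvearrowright\widehat{\mathbf{k}}$. It is more conceptual and avoids the bookkeeping with scalar coefficients, whereas the paper's is a self-contained finite computation. The two ingredients also combine into a shortcut. Once $E_A(v_r)=g_rv_r$, the paper's observation $E_A(v_r)\in L(\mathbf{k}^*)$ forces $g_r\in\mathbb{C}$, and idempotence of $E_A$ gives $g_r\in\{0,1\}$. So the ideal/ergodicity step can be dropped altogether.
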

\begin{proof}
First, by Lemma \ref{lem: (L(k)^F)' cap L(G) = L(k) rtimes F}, we deduce that $A\subseteq Z(A)'\cap L(\mathbf{k}\rtimes \mathbf{k}^*)= L(\mathbf{k}\rtimes F)$.

Denote by $E: L(\mathbf{k}\rtimes \mathbf{k}^*)\rightarrow A$ the trace preserving conditional expectation.

Using the $\mathbf{k}\rtimes \mathbf{k}^*$-invariance of $A$, we deduce that $E(v_s)\in L(\mathbf{k}^*)'\cap L(\mathbf{k}\rtimes \mathbf{k}^*)=L(\mathbf{k}^*)$ for all $s\in \mathbf{k}^*$ by Proposition \ref{prop: masas}.
Hence $E(v_s)\in L(\mathbf{k}^*)\cap A\subseteq L(\mathbf{k}^*)\cap L(\mathbf{k}\rtimes F)\subseteq L(F)$. Write $F=\langle z\rangle$ with $|z|=1$ and $z$ is a primitive $m$-th root of $1$, where $m$ is the order of $F$.

Since $L(\mathbf{k})\subseteq A \subseteq L(\mathbf{k}\rtimes F)$, we have that $A=\{E(b): b\in L(\mathbf{k}\rtimes F)\}''=\{L(\mathbf{k}), E(v_{z^i}): 0\leq i\leq m-1\}''$.

For any $0\leq k\leq m-1$, since $E(v_{z^k})\in L(F)$, we may write $E(v_{z^k})=\sum_{\ell=0}^{m-1}c_{\ell}^{(k)}v_{z^{\ell}}$, where $c_{\ell}^{(k)}\in\mathbb{C}$.

For any $0\neq h\in \mathbf{k}$, we have 
\begin{align*}
    A\ni u_hE(v_{z^k})u_h^*
    =E(u_hv_{z^k}u_h^*)=E(u_{h-z^kh}v_{z^k}).
\end{align*}
On the one hand, we have 
\begin{align}\label{eq-contain-L(k)-1}
u_hE(v_{z^k})u_h^*=\sum_{\ell=0}^{m-1}c_{\ell}^{(k)}u_hv_{z^{\ell}}u_h^*=\sum_{\ell=0}^{m-1}c_{\ell}^{(k)}u_{h-z^{\ell}h}v_{z^{\ell}}.
\end{align}
On the other hand,
\begin{align}\label{eq-contain-L(k)-2}
    u_hE(v_{z^k})u_h^*
    &=E(u_{h-z^kh}v_{z^k}) \notag\\
    &=E(E(u_{h-z^kh}v_{z^k})) \notag\\
    &\overset{\eqref{eq-contain-L(k)-1}}{=}E(\sum_{\ell=0}^{m-1}c_{\ell}^{(k)}u_{h-z^{\ell}h}v_{z^{\ell}}) \notag\\
    &=\sum_{\ell=0}^{m-1}c_{\ell}^{(k)}u_{h-z^{\ell}h}E(v_{z^{\ell}}) ~~(\text{since $L(\mathbf{k})\subseteq A$}) \notag\\
    &=\sum_{\ell=0}^{m-1}c_{\ell}^{(k)}u_{h-z^{\ell}h}\sum_{\ell'=0}^{m-1}c_{\ell'}^{(\ell)}v_{z^{\ell'}} \notag\\ 
    &=\sum_{\ell,\ell'=0}^{m-1}c_{\ell}^{(k)}c_{\ell'}^{(\ell)}u_{h-z^{\ell}h}v_{z^{\ell'}}.
\end{align}
Then we compare the coefficients using \eqref{eq-contain-L(k)-1} and \eqref{eq-contain-L(k)-2}.

First, look at the coefficient of $u_{h-z^ih}v_{z^i}$ for any fixed $0\leq i\leq m-1$, we deduce that
$c_i^{(k)}=c_i^{(k)}c_i^{(i)}$ for any $0<i,k\leq m-1$.

\textbf{Case 1.} If for all $0<i\leq m-1$ we have $c_i^{(i)}\neq 1$. Then $c_i^{(k)}=0$ for all $0<i, k\leq m-1$. Hence $E(v_{z^k})=c_0^{(k)}v_{z^0}=c_0^{(k)}$. By computing the trace, we deduce that $E(v_{z^k})=0$ for all $0<k\leq m-1$. Thus $A=L(\mathbf{k})$, contradicting to $Z(A)=L(\mathbf{k})^F$.

\textbf{Case 2.} There exists some $0<i\leq m-1$ such that $c_i^{(i)}=1$. Then $\tau(v_{z^i}^*E(v_{z^i}))=c_i^{(i)}=1$, i.e. $\|E(v_{z^i})-v_{z^i}\|_2^2=0$. Hence $v_{z^i}=E(v_{z^i})\in A$. Pick the minimal $d$ such that $0<d\leq m-1$ 
 and $v_{z^d}\in A$. 

 We claim that $A=L(\mathbf{k})\rtimes \langle z^d\rangle$.

To show this, we still compare the coefficients of both sides of \eqref{eq-contain-L(k)-1}
 and \eqref{eq-contain-L(k)-2}.

Given any $\ell\neq \ell'$ and look at the coefficient of $u_{h-z^{\ell}h}v_{z^{\ell'}}$, we deduce that 
\begin{align}\label{eq-contain-L(k)-3}
0=c_{\ell}^{(k)}c_{\ell'}^{(\ell)}.
\end{align}

Now assume that $v_{z^{\ell}}\not\in A$ for some $\ell$, i.e. $E(v_{z^{\ell}})\neq v_{z^{\ell}}$. Then by looking at the Fourier expansion of $E(v_{z^{\ell}})$, we have  either   $\exists \ell'\neq \ell$ such that $c_{\ell'}^{(\ell)}\neq 0$ or $E(v_{z^{\ell}})=0$.
In the first case, we deduce that $c_{\ell}^{(k)}=0$ for all $k$ by \eqref{eq-contain-L(k)-3}. In particular, $0=c_{\ell}^{(\ell)}=\tau(v_{z^{\ell}}^*E(v_{z^{\ell}}))=\|E(v_{z^{\ell}})\|_2^2$. Hence $E(v_{z^{\ell}})=0$. So in either case, we have $E(v_{z^{\ell}})=0$.

This shows that for any $0\leq \ell\leq m-1$, we have either $E(v_{z^{\ell}})=0$ or $v_{z^{\ell}}$.
Clearly, this shows that $A=L(\mathbf{k})\rtimes \langle z^d\rangle$ since $A=\{L(\mathbf{k}), E(v_{z^i}): 0\leq i\leq m-1\}''$. 

Finally, note that a calculation shows that $Z(A)=L(\mathbf{k})^{\langle z^d\rangle}$. Thus the assumption on $Z(A)$ implies that in fact we have that $\langle z^d\rangle=F$. Hence $A=L(\mathbf{k}\rtimes F)$.
\end{proof}

\begin{lemma}\label{lem: difference between two complex numbers with modulus one}
    Let $x,y,z$ be complex numbers with modulus one. Assume that $1-x=y-z$. Then either $x=1$ or $y=1$ or $y=-x$.
\end{lemma}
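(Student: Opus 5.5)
Rewrite the hypothesis as $1+z=x+y$, an equality between two sums of pairs of unit complex numbers. The whole proof rests on the classical fact that a sum of two unit vectors determines the pair up to order, unless the sum is zero.

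\textbf{The sum is nonzero.} Let $w:=1+z=x+y\neq 0$. For unit vectors $a,b$ with $a+b=w$, we have $|a+b|^2=2+2\,\mathrm{Re}(a\bar b)$, so $\mathrm{Re}(a\bar b)$ is determined by $|w|$. Moreover $a$ and $b$ are symmetric about the line through $0$ and $w$: both make angle $\theta$ with $w/|w|$, where $\cos\theta=|w|/2$. Concretely, write $a=e^{i\alpha}$, $b=e^{i\beta}$. Then
\[
a+b=2\cos\!\left(\tfrac{\alpha-\beta}{2}\right)e^{i(\alpha+\beta)/2},
\]
so the modulus and argument of $w$ fix the unordered pair $\{\alpha,\beta\}$ modulo $2\pi$. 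Alternatively: $b=w-a$ with $|b|=1$ gives $|w|^2-2\,\mathrm{Re}(\bar w a)=0$. The points $a$ on the unit circle satisfying this linear condition are the intersection of the circle with a line, hence at most two points. Since $\{1,z\}$ and $\{x,y\}$ both satisfy it, we get $\{x,y\}=\{1,z\}$. Therefore $x=1$ or $y=1$.

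\textbf{The sum is zero.} If $w=0$, then $z=-1$ and $x+y=0$, so $y=-x$.

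The only delicate step is the uniqueness of the decomposition, which reduces to the line–circle intersection having at most two points. Nondegeneracy of the line requires $w\neq0$, which is exactly why the case $w=0$ is split off.
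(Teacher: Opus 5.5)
Your proof is correct, but it takes a different route from the paper's.

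\textbf{The paper's route.} It argues by direct computation. With $x=e^{i\alpha}$, $y=e^{i\beta}$, the condition $|x+y-1|=1$ expands to $(\cos\alpha-1)(\cos\beta-1)+\sin\alpha\sin\beta=0$. Half-angle identities factor this as $\sin\frac{\alpha}{2}\sin\frac{\beta}{2}\cos\frac{\alpha-\beta}{2}=0$, and the three alternatives are read off from the three factors.

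\textbf{Your route.} You rewrite the hypothesis as $1+z=x+y$ and use that a nonzero $w$ decomposes essentially uniquely as a sum of two unit vectors. The unit circle meets the line $2\,\mathrm{Re}(\bar w a)=|w|^2$ in at most two points, and $1,z$ as well as $x,y$ all lie on it.

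\textbf{What each buys.} The paper's computation is quicker to check line by line. Your argument needs no trigonometric identities and proves more: either $z\neq -1$ and $\{x,y\}=\{1,z\}$, or $z=-1$ and $y=-x$. The paper tacitly uses this stronger form when it applies the lemma, both in the proof of Lemma~\ref{lem: invariant A with Z(A)=L(k)^{F}} and in Case~1, Subcase~3 of Theorem~\ref{thm: invariant vN subalgebras inside L(k rtimes k*)}. In each alternative it also records the value of the third unit number, e.g.\ $1=-z^{-k'}$ in case (c). From the paper's weaker statement, that value has to be recovered by substituting back into $z=x+y-1$.

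\textbf{Two minor remarks.}
\begin{enumerate}
\item The heuristic via $a+b=2\cos\frac{\alpha-\beta}{2}\,e^{i(\alpha+\beta)/2}$ leaves a sign and half-angle ambiguity. The line--circle argument is the part that actually carries the proof.
\item When $z=1$ the line is tangent to the circle, so the intersection is just $\{1\}$. Your conclusion $\{x,y\}=\{1,z\}$ should then be read as $x=y=1$, which still gives what you need.
\end{enumerate}
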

\begin{proof}
Write $x=e^{i\alpha}$ and $y=e^{i\beta}$.
Since $z=x+y-1$ has modulus one, we have that $1=|e^{i\alpha}+e^{i\beta}-1|$. A calculation shows that this boils down to 
\begin{align*}
    (cos\alpha-1)(cos\beta-1)+sin\alpha sin\beta=0
.
\end{align*}
Using $cos\alpha-1=-2sin^2(\frac{\alpha}{2})$ , $cos\beta-1=-2sin^2(\frac{\beta}{2})$ and $sin\alpha=2sin(\frac{\alpha}{2})cos(\frac{\alpha}{2})$ and $sin\beta=2sin(\frac{\beta}{2})cos(\frac{\beta}{2})$, we deduce that
\begin{align*}
    sin(\frac{\alpha}{2})sin(\frac{\beta}{2})(sin(\frac{\alpha}{2})sin(\frac{\beta}{2})+cos(\frac{\alpha}{2})cos(\frac{\beta}{2}))=0,
\end{align*}
this shows that
\begin{align*}
    sin(\frac{\alpha}{2})sin(\frac{\beta}{2})cos(\frac{\alpha-\beta}{2})=0
.
\end{align*}
Hence either $sin(\frac{\alpha}{2})=0$ or $sin(\frac{\beta}{2})=0$ or $cos(\frac{\alpha-\beta}{2})=0$. This is equivalent to either $x=1$ or $y=1$ or $y=-x$.
\end{proof}

\begin{lemma} \label{lem: lemma about theta, k', delta}
    Let $m\geq 1$. Let $z=e^{2\pi i/m}$ be a primitive $m$-th root of 1. For given positive integers $0<\theta, k', \delta <m$, if there exists an integer $k$ such that 
\begin{align*}
    {z^k(1-z^{k'})}(1-z^{\theta})=(1-z^{k+k'})z^{\theta-k'}(1-z^{\delta}).
\end{align*}
Then either $\delta=k'$ and $k\equiv \theta-k'(\text{mod}~ m)$ or $\delta+\theta=m$ and $k\equiv -2k'(\text{mod}~m)$. 

\end{lemma}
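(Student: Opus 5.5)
The plan is to factor each term $1-z^a$ into a positive real modulus times a phase, and then split the equation into a phase condition and a real trigonometric condition. Rather than expanding and invoking Lemma \ref{lem: difference between two complex numbers with modulus one}, I would use the identity
\[
1-z^{a}=-2i\sin(\pi a/m)\,e^{i\pi a/m},
\]
valid for every integer $a$. Set $A=\pi k'/m$, $B=\pi\theta/m$, $D=\pi\delta/m$. By the hypotheses on $\theta,k',\delta$, all three lie in $(0,\pi)$, so $\sin A,\sin B,\sin D>0$.

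Substituting the identity, the common factor $(-2i)^2$ cancels and the equation becomes
\[
\sin A\,\sin B\;e^{i\pi(2k+k'+\theta)/m}=\sin\bigl(\pi(k+k')/m\bigr)\,\sin D\;e^{i\pi(2\theta-2k'+k+k'+\delta)/m}.
\]
The left side is nonzero, so $\sin(\pi(k+k')/m)\neq0$. Both sides are then nonzero reals times unimodular phases. Hence the two phases agree up to sign, which means the exponents differ by an integer multiple of $\pi$:
\[
k+2k'-\theta-\delta=jm \quad\text{for some } j\in\mathbb{Z}.
\]

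Next I would substitute $k+k'=\theta+\delta-k'+jm$ back into the sine. This gives
\[
\sin(\pi(k+k')/m)=(-1)^j\sin(B+D-A),
\]
while the phase ratio contributes exactly $e^{i\pi j}=(-1)^j$. The two signs cancel, leaving the exact real identity
\[
\sin A\sin B=\sin(B+D-A)\sin D.
\]
This sign bookkeeping is the step where an error could slip in, so I would check it carefully. Applying the product-to-sum formula to both sides, the terms $\cos(A-B)=\cos(B-A)$ cancel, and we are left with
\[
\cos(A+B)=\cos(B+2D-A).
\]
Therefore $A+B\equiv\pm(B+2D-A)\pmod{2\pi}$.

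The two signs give the two alternatives of the lemma.
\begin{itemize}
\item With the $+$ sign, $2A-2D\in2\pi\mathbb{Z}$, so $A-D\in\pi\mathbb{Z}$. Since $A,D\in(0,\pi)$, this forces $A=D$, that is, $\delta=k'$. Then the relation $k+2k'-\theta-\delta\equiv0\pmod m$ yields $k\equiv\theta-k'\pmod m$.
\item With the $-$ sign, $2B+2D\in2\pi\mathbb{Z}$. Since $0<B+D<2\pi$, this forces $B+D=\pi$, that is, $\delta+\theta=m$. Then $k\equiv\theta+\delta-2k'\equiv-2k'\pmod m$.
\end{itemize}
This gives exactly the two alternatives in the statement.
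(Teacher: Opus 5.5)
Your proof is correct and follows essentially the same route as the paper: write $1-z^{a}=-2i\sin(\pi a/m)e^{i\pi a/m}$, use that a unimodular number equal to a real number must be $\pm1$ to get $k+2k'\equiv\theta+\delta \pmod m$, cancel the matching signs $(-1)^j$ to reach $\sin A\sin B=\sin(B+D-A)\sin D$, and finish with the product-to-sum formula and the range constraints on $A,B,D$. The differences are cosmetic. You do not first divide by $z^{\theta-k'}$, and you explicitly derive the congruences for $k$ in both cases, which the paper leaves as ``not hard to check.''
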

\begin{proof}
    For integers $0<\theta,k',\delta<m$, we consider when there exists an integer $k$ such that
\[
z^k(1-z^{k'})(1-z^{\theta})=(1-z^{k+k'})z^{\theta-k'}(1-z^{\delta}). \tag{1}
\]
in several steps.

\textbf{Step 1.} Divide (1) by $z^{\theta-k'}\neq0$ and set $A=k+k'$, then we have
\[
z^{A-\theta}(1-z^{k'})(1-z^{\theta})=(1-z^{A})(1-z^{\delta}). \tag{2}
\]

\textbf{Step 2.} Observe that for a positive integer $a$, we have $1-z^a=-2ie^{\pi i a/m}\sin(\pi a/m)$. Substitute into (2) gives
\[
z^{A-\theta}(-2i)^2e^{\pi i(k'+\theta)/m}\sin\frac{\pi k'}{m}\sin\frac{\pi\theta}{m}
=(-2i)^2e^{\pi i(A+\delta)/m}\sin\frac{\pi A}{m}\sin\frac{\pi\delta}{m}.
\]
Since $z^{A-\theta}=e^{2\pi i(A-\theta)/m}$, the left-hand side becomes
\[
-4e^{2\pi iA/m}e^{\pi i(k'-\theta)/m}\sin\frac{\pi k'}{m}\sin\frac{\pi\theta}{m}.
\]
The right-hand side is
\[
-4e^{\pi i(A+\delta)/m}\sin\frac{\pi A}{m}\sin\frac{\pi\delta}{m}.
\]
Canceling $-4$ and dividing by $e^{\pi i(A+\delta)/m}$, we deduce that
\[
e^{\pi i(A+k'-\theta-\delta)/m}
=\frac{\sin(\pi A/m)\sin(\pi\delta/m)}{\sin(\pi k'/m)\sin(\pi\theta/m)}. \tag{3}
\]

\textbf{Step 3.} The left side of (3) has modulus 1 while the right side is real. Hence
\[
e^{\pi i(A+k'-\theta-\delta)/m}=\pm1,
\]
so
\[
\frac{A+k'-\theta-\delta}{m}\in\mathbb{Z},\qquad\text{i.e.}\qquad
A\equiv\theta+\delta-k'\pmod{m}.
\]
Write $A=\theta+\delta-k'+mn$ with $n\in\mathbb{Z}$. Then the left side of (3) equals $(-1)^n$. Also
\[
\sin\frac{\pi A}{m}=\sin\Bigl(\pi\frac{\theta+\delta-k'}{m}+\pi n\Bigr)=(-1)^n\sin\Bigl(\pi\frac{\theta+\delta-k'}{m}\Bigr).
\]
Substitute into (3) and cancel $(-1)^n$, we deduce that
\[
\sin\Bigl(\pi\frac{\theta+\delta-k'}{m}\Bigr)\sin\frac{\pi\delta}{m}
=\sin\frac{\pi k'}{m}\sin\frac{\pi\theta}{m}. \tag{4}
\]

\textbf{Step 4.} Set $\alpha=\pi\theta/m$, $\beta=\pi k'/m$, $\gamma=\pi\delta/m$ (all in $(0,\pi)$). Then (4) becomes
\[
\sin(\alpha+\gamma-\beta)\sin\gamma=\sin\beta\sin\alpha.
\]
Using $\sin X\sin Y=\frac12[\cos(X-Y)-\cos(X+Y)]$, we deduce that
\[
\cos(\alpha-\beta)-\cos(\alpha+2\gamma-\beta)=\cos(\alpha-\beta)-\cos(\alpha+\beta).
\]
Cancel $\cos(\alpha-\beta)$ and multiply by $-1$, then
\[
\cos(\alpha+2\gamma-\beta)=\cos(\alpha+\beta).
\]
Hence
\[
\alpha+2\gamma-\beta=\pm(\alpha+\beta)+2\pi t,\quad t\in\mathbb{Z}.
\]

\textbf{Case +:} $\alpha+2\gamma-\beta=\alpha+\beta+2\pi t\Rightarrow\gamma=\beta+\pi t$. Since $\gamma,\beta\in(0,\pi)$, $t=0$ gives $\gamma=\beta$, i.e. $\delta=k'$. It is not hard to check that we have $k\equiv \theta-k'(\text{mod}~ m)$.

\textbf{Case --:} $\alpha+2\gamma-\beta=-\alpha-\beta+2\pi t\Rightarrow2\alpha+2\gamma=2\pi t\Rightarrow\alpha+\gamma=\pi t$. Because $\alpha,\gamma\in(0,\pi)$, we have $\alpha+\gamma\in(0,2\pi)$, so $t=1$ and $\alpha+\gamma=\pi$, i.e. $\theta+\delta=m$.
We can check that $k\equiv -2k'(\text{mod}~m)$.

Hence we finish the proof.
\end{proof}

The following might be the most difficult step for the whole proof, which extends Lemma \ref{lem: invariant A contains L(k)} by dropping the condition that $L(\mathbf{k})\subseteq A$. It shows that the invariance condition drastically restricts the structure of intermediate subalgebras. We do not know whether there is a more conceptual proof or not, see Remark \ref{remark: direct integral gives more intermediate ones} for more comments on this.

\begin{lemma} \label{lem: invariant A with Z(A)=L(k)^{F}}
    Let $A\subseteq L(\mathbf{k}\rtimes \mathbf{k}^*)$ be a $\mathbf{k}\rtimes \mathbf{k}^*$-invariant von Neumann subalgebra such that $Z(A)=L(\mathbf{k})^F$ for some non-trivial finite (cyclic) subgroup $F\subseteq \mathbf{k}^*$. Then either $A=L(\mathbf{k})^{F}$ or $A=L(\mathbf{k}\rtimes F)$.
\end{lemma}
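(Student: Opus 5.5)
Since $Z(A)=L(\mathbf{k})^F$ and $A\subseteq Z(A)'\cap L(\mathbf{k}\rtimes\mathbf{k}^*)$, Lemma \ref{lem: (L(k)^F)' cap L(G) = L(k) rtimes F} gives $L(\mathbf{k})^F\subseteq A\subseteq L(\mathbf{k}\rtimes F)$. Write $F=\langle z\rangle$ with $z=e^{2\pi i/m}$, and let $E$ be the trace-preserving conditional expectation onto $A$. The plan is to show that either $A\subseteq L(\mathbf{k})$ or $L(\mathbf{k})\subseteq A$. In the second case Lemma \ref{lem: invariant A contains L(k)} gives $A=L(\mathbf{k}\rtimes F)$. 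In the first case $A$ is a $\mathbf{k}^*$-invariant subalgebra of $L(\mathbf{k})$ containing $L(\mathbf{k})^F$, so by Corollary \ref{cor: classifying all factor maps} it equals $L(\mathbf{k})^{F'}$ for some finite $F'\subseteq F$. Since $A$ is abelian, $Z(A)=A$, so $F'=F$ and $A=L(\mathbf{k})^F$.

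To decide between the cases, I would use the character $\phi(g)=\tau(v_g^*E(v_g))$ on $G=\mathbf{k}\rtimes\mathbf{k}^*$. Elements $(q,r)$ with $r\notin F$ satisfy $E(u_qv_r)=0$, because $A\subseteq L(\mathbf{k}\rtimes F)$. For $r=z^{k}$ with $k\neq 0$, the conjugacy class of $u_qv_{z^k}$ is $\{u_{q'}v_{z^k}:q'\in\mathbf{k}\}$, since $1-z^k\neq0$ and $\mathbf{k}^*$ rescales. Thus $\phi(u_qv_{z^k})=c_k$ depends only on $k$. Similarly $\phi(u_q)$ depends only on the orbit $q\mathbf{k}^*$, giving a value $a=\phi(u_q)$ for $q\neq0$.

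The key computation is to expand $E(u_qv_{z^k})$ in Fourier coefficients. Because $E$ is $G$-equivariant and bimodular over $L(\mathbf{k})^F$, conjugating by $u_h$ and $v_s$ and comparing coefficients yields functional equations between the coefficients. These have the form in Lemma \ref{lem: difference between two complex numbers with modulus one} (a coefficient relation $1-x=y-z$ among roots of unity) and in Lemma \ref{lem: lemma about theta, k', delta} (matching $z^k(1-z^{k'})(1-z^\theta)$ against $(1-z^{k+k'})z^{\theta-k'}(1-z^\delta)$, which come from translating $u_{h-z^kh}$ terms). I expect these to force one of two outcomes for each $k$. Either $E(u_qv_{z^k})$ vanishes, or it is supported on terms $u_{q'}v_{z^{k}}$ with $q'$ in a single $F$-orbit. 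In the latter case, arguing as in Case 2 of Lemma \ref{lem: invariant A contains L(k)}, a coefficient $c_k^{(k)}=1$ forces $v_{z^k}$-type elements, hence all of $L(\mathbf{k})$, into $A$.

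The dichotomy is then as follows. If $\phi(u_q)=1$ for $q\neq0$, then $u_q\in A$ for all $q$, so $L(\mathbf{k})\subseteq A$. Otherwise $E(u_q)\in L(\mathbf{k})^F$: it is the average $\frac1m\sum_f u_{qf}$, by the factor classification applied to $E(L(\mathbf{k}))\subseteq A\cap L(\mathbf{k})$, which is $\mathbf{k}^*$-invariant. In that case I must rule out any contribution from the $v_{z^k}$ parts. I would do this with Proposition \ref{prop: vanishing characters}: pick conjugates $s_n=u_{q_n}v_{z^k}$ and check that $s_n^{-1}s_m=u_{z^{-k}(q_m-q_n)}$ lies outside the region where $\phi$ is nonzero. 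This needs care, since $\phi(u_p)=a$ may be nonzero, so instead I would run the argument on $E(u_qv_{z^k})$ projected away from $L(\mathbf{k})$. The main obstacle is exactly this mixed case, where $A$ contains neither $L(\mathbf{k})$ nor is contained in it, for example elements like $\sum_f u_{qf}v_{z^k}$. There the root-of-unity identities from Lemma \ref{lem: lemma about theta, k', delta} must be used to show that such mixed algebras are not closed under multiplication and $u_h$-conjugation simultaneously unless $\delta=k'$, which collapses back to $L(\mathbf{k})\subseteq A$.
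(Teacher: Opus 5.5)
\textbf{There is a genuine gap.} Your reduction is correct and matches the skeleton of the paper's proof. You have $L(\mathbf{k})^F\subseteq A\subseteq L(\mathbf{k}\rtimes F)$, and once you know that $L(\mathbf{k})\subseteq A$ or $A\subseteq L(\mathbf{k})$, Lemma \ref{lem: invariant A contains L(k)} (respectively $A=Z(A)$) finishes. But that dichotomy is the whole content of the lemma, and your proposal does not prove it.

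\textbf{The character route cannot work.} Since $L(\mathbf{k})^F\subseteq A$, you have $\phi(u_p)=\|E(u_p)\|_2^2\geq\|\frac{1}{m}\sum_{f\in F}u_{pf}\|_2^2=\frac{1}{m}$ for every $p\in\mathbf{k}\setminus\{0\}$. Every conjugate of $u_qv_{z^k}$ has the form $u_{q'}v_{z^k}$, so all quotients $s_n^{-1}s_m$ lie in $\mathbf{k}\setminus\{0\}$, and Proposition \ref{prop: vanishing characters} never applies. The fallback of ``projecting away from $L(\mathbf{k})$'' does not produce a character. The natural candidate is $g\mapsto\tau\bigl(v_g^*(\mathrm{id}-E_{L(\mathbf{k})})E(v_g)\bigr)$, with $E_{L(\mathbf{k})}$ the trace-preserving expectation onto $L(\mathbf{k})$. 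For $A=L(\mathbf{k}\rtimes F)$ it equals $1_{(\mathbf{k}\rtimes F)\setminus\mathbf{k}}$, which vanishes at $e$ without vanishing identically, so it is not positive definite.

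\textbf{The core step is only announced.} The sentence ``I expect these functional equations to force one of two outcomes'' is exactly the missing argument. You never say which identity produces the equation of Lemma \ref{lem: lemma about theta, k', delta}.

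\textbf{Two further inaccuracies.} First, a priori $A\cap L(\mathbf{k})=L(\mathbf{k})^{F'}$ for some subgroup $F'\subseteq F$. So $E(u_q)$ is the $F'$-average, and the case $\{1\}\neq F'\subsetneq F$ is not addressed. Second, by equivariance $E(u_qv_{z^k})=u_hE(v_{z^k})u_h^*$ with $h=q/(1-z^k)$ and $E(v_{z^k})\in L(F)$. Its support is therefore $\{u_{h(1-z^{\ell})}v_{z^{\ell}}\}_{\ell}$, not a set of terms $u_{q'}v_{z^k}$ with $q'$ in an $F$-orbit.

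\textbf{The missing idea} is to play the module property $E(ay)=aE(y)$ with $a=E(v_{z^k})\in A$ (not merely $a\in L(\mathbf{k})^F$) against equivariance. Write $E(v_{z^k})=\sum_{\ell}c^{(k)}_{\ell}v_{z^{\ell}}$ with $c^{(k)}_0=0$. The paper computes $E\bigl(E(v_{z^k})\,u_qv_{z^{k'}}u_q^*\bigr)$ in two ways:
\begin{itemize}
\item as $E(v_{z^k})\,u_qE(v_{z^{k'}})u_q^*$;
\item by expanding $E(v_{z^k})$ inside, writing $u_{z^iq(1-z^{k'})}v_{z^{i+k'}}=u_{\tilde q_i}v_{z^{i+k'}}u_{\tilde q_i}^*$ with $\tilde q_i=z^iq(1-z^{k'})/(1-z^{i+k'})$ for $i\neq m-k'$, and applying equivariance.
\end{itemize}
Comparing coefficients of $u_{z^{\theta-k'}q(1-z^{\delta})}v_{z^{\theta}}$ via Lemmas \ref{lem: difference between two complex numbers with modulus one} and \ref{lem: lemma about theta, k', delta} gives, for $z^{k'}\neq-1$, two quadratic relations:
\begin{itemize}
\item $c^{(k)}_{\theta-k'}\bigl(c^{(k')}_{k'}-c^{(\theta)}_{\theta}\bigr)=0$;
\item $c^{(k)}_{-2k'}c^{(m-k')}_{\theta}=0$ for $\theta\neq m-k'$.
\end{itemize}
If some $v_{z^k}\in A$, then $u_{q-qz^k}=u_qv_{z^k}u_q^*v_{z^k}^*\in A$ for all $q$, so $L(\mathbf{k})\subseteq A$. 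Otherwise a case analysis on these relations shows $E(v_{z^k})=0$ for all $k\neq0$. It takes $\theta=2k'$ for $0<k'<m/2$, and uses the $z^{k'}=-1$ relation for $k'=m/2$. Then $E(u_{q-qz^k}v_{z^k})=u_qE(v_{z^k})u_q^*=0$, and $A\subseteq L(\mathbf{k})$. Without this computation, or an equivalent one, your proposal does not establish the lemma.
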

\begin{proof}
Write $F=\langle z\rangle$ with $|z|=1$ and $z=e^{2\pi i/m}$ is a primitive $m$-th root of $1$ for some $m\geq 2$.

Note that $A\subseteq Z(A)'\cap L(\mathbf{k}\rtimes \mathbf{k}^*)=[L(\mathbf{k})^F]'\cap L(\mathbf{k}\rtimes \mathbf{k}^*)=L(\mathbf{k}\rtimes F)$ by Lemma \ref{lem: (L(k)^F)' cap L(G) = L(k) rtimes F}.

Denote by $E: L(\mathbf{k}\rtimes \mathbf{k}^*)\rightarrow A$ the trace preserving conditional expectation.

Using the $\mathbf{k}\rtimes \mathbf{k}^*$-invariance of $A$, we deduce that $E(u_q)\in L(\mathbf{k})'\cap L(\mathbf{k}\rtimes \mathbf{k}^*)=L(\mathbf{k})$ for all $q\in\mathbf{k}$ and $E(v_s)\in L(\mathbf{k}^*)'\cap L(\mathbf{k}\rtimes \mathbf{k}^*)=L(\mathbf{k}^*)$ for all $s\in \mathbf{k}^*$  by Proposition \ref{prop: masas}. Thus, similar to the argument in Lemma \ref{lem: invariant A contains L(k)}, we deduce that $E(v_s)\in L(F)$ for all $s\in \mathbf{k}^*$.

For any $0\leq k\leq m-1$, write $E(v_{z^k})=\sum_{\ell=0}^{m-1}c_{\ell}^{(k)}v_{z^{\ell}}$, where $c_{\ell}^{(k)}\in\mathbb{C}$. Clearly, by taking trace on both sides, we have that $c_0^{(k)}=0$ for all $0<k\leq m-1$.

Note that for any $q\in \mathbf{k}$, we have 
\begin{align*}
E(u_qv_{z^k}u_q^*)=u_qE(v_{z^k})u_q^*=\sum_{j=0}^{m-1}c_j^{(k)}u_{q-qz^j}v_{z^j}.
\end{align*}

Fix any $0<k,k'\leq m-1$ and any $0\neq q\in \mathbf{k}$, let us compute $E(E(v_{z^k})u_qv_{z^{k'}}u_q^*)$ in two ways.

On the one hand, we have
\begin{align}\label{eq: k-invariance-eq-1}
E(E(v_{z^k})u_qv_{z^{k'}}u_q^*)
    &=E(v_{z^k})E(u_qv_{z^{k'}}u_q^*)  \notag\\ 
    &=(\sum_{i=0}^{m-1}c_{i}^{(k)}v_{z^i})(\sum_{j=0}^{m-1}c_j^{(k')}u_{q-qz^j}v_{z^{j}}) \notag\\ 
    &=\sum_{i,j=0}^{m-1}c_i^{(k)}c_j^{(k')}u_{z^iq(1-z^j)}v_{z^{i+j}}.
\end{align}
On the other hand, note that for $i\neq m-k'$, the term $1-z^{i+k'}\neq0$, allowing us to safely define $\tilde{q_i}=\frac{z^iq(1-z^{k'})}{1-z^{i+k'}}$ below and split the sum into two parts. Thus we have
\begin{align}\label{eq: k-invariance-eq-2}
&E(E(v_{z^k})u_qv_{z^{k'}}u_q^*)  \notag \\ 
   &=E(\sum_{i=0}^{m-1}c_i^{(k)}v_{z^i}u_{q(1-z^{k'})}v_{z^{k'}}) \notag\\ 
   &=\sum_{i=0}^{m-1}c_i^{(k)}E(u_{z^iq(1-z^{k'})}v_{z^{i+k'}}) \notag\\ 
   &=\sum_{i=0,i\neq m-k'}^{m-1}c_i^{(k)}E(u_{z^iq(1-z^{k'})}v_{z^{i+k'}})+c_{m-k'}^{(k)}E(u_{z^{m-k'}q-q}) \notag\\ 
   &=\sum_{i=0,i\neq m-k'}^{m-1}c_i^{(k)}u_{\tilde{q_i}}E(v_{z^{i+k'}})u_{\tilde{q_i}}^*+c_{m-k'}^{(k)}E(u_{z^{m-k'}q-q}) \notag\\ 
   &=\sum_{i=0,i\neq m-k'}^{m-1}c_i^{(k)}(\sum_{j=0}^{m-1}c_j^{(i+k')}u_{\tilde{q_i}}v_{z^j}u_{\tilde{q_i}}^*)+c_{m-k'}^{(k)}E(u_{z^{m-k'}q-q}) \notag\\ 
   &=\sum_{i=0,i\neq m-k'}^{m-1}\sum_{j=0}^{m-1}c_i^{(k)}c_j^{(i+k')}u_{\tilde{q_i}(1-z^j)}v_{z^j}+c_{m-k'}^{(k)}E(u_{z^{m-k'}q-q}).
\end{align}
Next, we compare the coefficients of $u_{z^{\theta-k'}(q-z^{\delta}q)}v_{z^{\theta}}$ for any $0<\theta,k',\delta<m$.

First, we compute the corresponding coefficient from \eqref{eq: k-invariance-eq-1}. Set
$z^iq(1-z^j)=z^{\theta-k'}q(1-z^{\delta})$ and $z^{i+j}=z^{\theta}$. We deduce that
$1-z^{i-\theta}=z^{\delta-k'}-z^{-k'}$. Applying Lemma \ref{lem: difference between two complex numbers with modulus one}, we deduce that there are three possibilities (note that most of the following equalities e.g. $i=\theta-k'$ should be understood as congruences $\text{mod}~m$):
\begin{itemize}
    \item[(a)] $i-\theta=0$, $\delta-k'=-k'$. Hence $i=\theta$, $j=0$. 
    \item[(b)] $\delta=k'$, $i-\theta=-k'$. Hence $i=\theta-k'$, $j=k'$.
    \item[(c)] $z^{\delta-k'}=-z^{i-\theta}$, $1=-z^{-k'}$, i.e. $z^{k'}=-1$ and $\delta=i-\theta$. Hence $i=\delta+\theta$, $j=m-\delta$.
\end{itemize}

Recall that $c^{(k')}_0=0$ for all $0<k'\leq m-1$.
Therefore, the coefficient we get from \eqref{eq: k-invariance-eq-1} is equal to the following expression (the case that $\delta=k'$ and $z^{k'}=-1$ is not considered here, since it will not be required later. Moreover, the subscripts e.g. $\delta+\theta$ in $c^{(k)}_{\delta+\theta}$ should be understood as the corresponding number between $0$ and $m$ after mod $m$):  
\begin{align}
\begin{cases}\label{eq: coefficent on LHS of the key lemma}
 c_{\theta-k'}^{(k)}c_{k'}^{(k')},&\text{if}~\delta=k',z^{k'}\neq -1\\ 
c_{\delta+\theta}^{(k)}c_{m-\delta}^{(k')},&\text{if}~\delta\neq k',z^{k'}=-1\\ 
0,&\text{if}~\delta\neq k',z^{k'}\neq -1.
    \end{cases}
\end{align}

To compute the corresponding coefficient from $\eqref{eq: k-invariance-eq-2}$, we first note that $E(u_{z^{m-k'}q-q})\in L(\mathbf{k})$ and hence has no contribution as $\theta\neq 0,m$, therefore, the corresponding coefficient equals $c_i^{(k)}c_j^{(i+k')}$, where $j=\theta$ and 
\begin{align*}
    \tilde{q_i}(1-z^j)=\tilde{q_i}(1-z^{\theta})=z^{\theta-k'}(q-z^{\delta}q).
\end{align*}
Plugging the expression for $\tilde{q_i}$ into the above identity and simplifying it, we  deduce that
\begin{align*}
    \frac{z^i(1-z^{k'})}{1-z^{i+k'}}(1-z^{\theta})=z^{\theta-k'}(1-z^{\delta}).
\end{align*}

Then  Lemma \ref{lem: lemma about theta, k', delta} yields either $\delta=k'$ and $i\equiv \theta-k'(\text{mod}~ m)$ or $\delta+\theta=m$ and $i\equiv -2k'(\text{mod}~m)$. Also note that for $i\equiv \theta-k'(\text{mod}~ m)$ or $i\equiv -2k'(\text{mod}~m)$, it automatically satisfies that $i\neq m-k'$ once we take $0<\theta,k'<m$.
Therefore, the coefficient from $\eqref{eq: k-invariance-eq-2}$ equals the following expression (once again, the subscripts e.g. $-2k'$ in $c_{-2k'}^{(k)}$ should be understood as the corresponding number between $0$ and $m$ after mod $m$):
\begin{align}\label{eq: coefficent on RHS of the key lemma}
    \begin{cases}
        c_{\theta-k'}^{(k)}c_{\theta}^{(\theta)},&\text{if}~\delta=k'\\ 
        c_{-2k'}^{(k)}c_{\theta}^{(m-k')},&\text{if}~\delta=m-\theta, \\ 
        0,&\text{if}~\delta\neq k',m-\theta.
    \end{cases}
\end{align}
Note that if $\delta=k'=m-\theta$, then the above first two cases coincide.

Let us continue the whole proof, and we will compare \eqref{eq: coefficent on LHS of the key lemma} with \eqref{eq: coefficent on RHS of the key lemma} several times for suitable chosen $0<k<m$  and $0<\theta,\delta,k'<m$ with $k'\leq \theta$ and $k'\leq \frac{m}{2}$. 

Assume that there exists some $1\leq k\leq m-1$ such that $E(v_{z^k})=v_{z^k}$.
    Then $v_{z^k}\in A$ and hence for all $q\in\mathbf{k}$, we have $u_{q-qz^k}v_{z^k}=u_qv_{z^k}u_q^*\in A$ as $A$ is $\mathbf{k}$-invariant, thus $u_{q-qz^k}\in A$ for all $q\in \mathbf{k}$. Since $\{q-qz^k: q\in\mathbf{k}\}=\mathbf{k}$, we deduce that $L(\mathbf{k})\subseteq A$. Thus we can apply Lemma \ref{lem: invariant A contains L(k)} to deduce that $A=L(\mathbf{k})\rtimes F$.

So to finish the proof, we may assume that $E(v_{z^k})\neq v_{z^k}$ for all $1\leq k\leq m-1$ from now on.

\textbf{Claim.} $E(v_{z^k})=0$ for all $1\leq k\leq m-1$.

\textbf{Proof of the claim.} 
Note that we may assume $m\geq 3$.
Indeed, if $m=2$, then $E(v_z)=c^{(1)}_0+c_1^{(1)}v_z=c_1^{(1)}v_z$. By taking trace on both sides, we deduce that $c^{(1)}_1v_z=E(v_z)=E(E(v_z))=c^{(1)}_1E(v_z)=[c^{(1)}_1]^2v_z$. Thus $c_1^{(1)}=[c_1^{(1)}]^2$, i.e. $c_1^{(1)}=0$ or $1$. Since we assume $E(v_z)\neq v_z$, we deduce that $c_1^{(1)}=0$, i.e. $E(v_z)=0$.

From now on, we assume that $m\geq 3$, and we split the proof by considering the parity of $m$.

\textbf{Case 1.} $m$ is even.

Note that in this case $z^{k'}=-1$ iff $k'=\frac{m}{2}$.

Now assume the claim fails, so there exists some  $0<k'<m$ such that $E(v_{z^{k'}})\neq 0$.
Since $v_{z^{k'}}=v_{z^{m-k'}}^*$ and $E(v_{z^{m-k'}})=E(v_{z^{k'}}^*)=E(v_{z^{k'}})^*$, we may assume that $0<k'\leq \frac{m}{2}$.

\textbf{Subcase 1}. $0<k'<\frac{m}{2}$.

\textbf{Subsubcase 1.} For all $\theta\neq m-k'$, $c_{\theta}^{(m-k')}=0$.

Then $E(v_{z^{m-k'}})=c^{(m-k')}_{m-k'}v_{z^{m-k'}}$. By taking $E$ on both sides, it is not hard to deduce that either $E(v_{z^{m-k'}})=0$ or $E(v_{z^{m-k'}})=v_{z^{m-k'}}$. Note that $E(v_{z^{m-k'}})=E(v_{z^{k'}}^*)=E(v_{z^{k'}})^*$. This  also means that $E(v_{z^{k'}})=0$ or $E(v_{z^{k'}})=v_{z^{k'}}$.

However, recall that we have $E(v_{z^k})\neq v_{z^k}$ for all $0<k<m$ and $E(v_{z^{k'}})\neq 0$, the above subsubcase 1 does not hold for this $k'$. Therefore, we are in the following Subsubcase 2.

\textbf{Subsubcase 2.} There exists some $\theta\neq m-k'$ such that $c_{\theta}^{(m-k')}\neq 0$. 

Set $\delta=m-\theta$. Note that $0<\delta<m$ and $\delta\neq k'$. Since $k'\neq \frac{m}{2}$, by comparing \eqref{eq: coefficent on LHS of the key lemma} with \eqref{eq: coefficent on RHS of the key lemma}, we deduce that
$0=c_{m-2k'}^{(k)}c_{\theta}^{(m-k')}$. Then we deduce that $c_{m-2k'}^{(k)}=0$ for all $0<k<m$. We may set $k=m-2k'$ to deduce that $c^{(m-2k')}_{m-2k'}=0$. Note that $\|E({v_{z^{m-2k'}}})\|_2^2=\tau(E(v_{z^{m-2k'}}^*)E(v_{z^{m-2k'}}))=\tau(v_{z^{m-2k'}}^*E(v_{z^{m-2k'}}))=c^{(m-2k')}_{m-2k'}$. Hence $E(v_{z^{m-2k'}})=0$. Thus $E(v_{z^{2k'}})=E(v_{z^{m-2k'}}^*)=0$.

So based on the above argument, we deduce that $E(v_{z^{2k'}})=0$.

Next, we will draw a contradiction.

Take $\delta=k'$ and note that we assumed that $k'\neq \frac{m}{2}$, by comparing \eqref{eq: coefficent on LHS of the key lemma} with \eqref{eq: coefficent on RHS of the key lemma}, we deduce that $c_{\theta-k'}^{(k)}c_{k'}^{(k')}=c_{\theta-k'}^{(k)}c_{\theta}^{(\theta)}$, i.e. $c_{\theta-k'}^{(k)}[c_{k'}^{(k')}-c_{\theta}^{(\theta)}]=0$. Set $0<\theta:=2k'<m$, then 
$0=c_{k'}^{(k)}[c_{k'}^{(k')}-c_{2k'}^{(2k')}]$.
From $E(v_{z^{2k'}})=0$ and $E(v_{z^{k'}})\neq 0$, we deduce that $c_{2k'}^{(2k')}=0$ while $c_{k'}^{(k')}=\|E({v_{z^{k'}}})\|_2^2\neq 0$. Hence  $0=c_{k'}^{(k)}$. Set $k=k'$, we deduce that $0=c_{k'}^{(k')}=\|E(v_{z^{k'}})\|_2^2$. i.e. $E(v_{z^{k'
}})=0$, a contradiction.

\textbf{Subcase 2.} $k'=\frac{m}{2}$.

Take any $0<\delta\neq \frac{m}{2}<m$ (such $\delta$ exists since we have assumed that $m\geq 3$), then by comparing \eqref{eq: coefficent on LHS of the key lemma} with \eqref{eq: coefficent on RHS of the key lemma}, we deduce that
\begin{align*}
        c_{\theta+\delta}^{(k)}c_{m-\delta}^{(\frac{m}{2})}=    \begin{cases}
c_{0}^{(k)}c_{\theta}^{(\frac{m}{2})}=0,~&\text{if}~\delta=m-\theta\\ 
0,~&\text{if}~\delta\neq m-\theta
    \end{cases}
    =0.
\end{align*}
Set $k=\frac{m}{2}$. Then $c^{(\frac{m}{2})}_{\theta+\delta}c_{m-\delta}^{(\frac
{m}{2})}=0$ for all $\delta\neq \frac{m}{2}$ and $\frac{m}{2}=k'\leq \theta<m$.

Now assume that $c^{(\frac{m}{2})}_i\neq 0$ for some $0<i\neq \frac{m}{2}<m$, then notice that we may assume that $i<\frac{m}{2}$. Indeed, note that $v_{z^{\frac{m}{2}}}^*=v_{z^{\frac{-m}{2}}}=v_{z^{\frac{m}{2}}}$, thus by comparing the coefficient of $c^{(\frac{m}{2})}_i$ in the Fourier expansion of $E(v_{z^{\frac{m}{2}}})=E(v_{z^{\frac{m}{2}}})^*$, we deduce that $c^{(\frac{m}{2})}_i=\overline{c^{(\frac{m}{2})}_{m-i}}$. Thus we may replace $i$ with $m-i$ if necessary to assume that $0<i<\frac{m}{2}$.

Set $\delta=m-i$ and $\theta=\frac{m}{2}+i$, thus $\theta+\delta=\frac{3m}{2}\equiv \frac{m}{2}~\text{mod}~m$ and $m-\delta=i$, thus  $0=c^{(\frac{m}{2})}_{\frac{m}{2}}c^{(\frac{m}{2})}_i$, hence $c^{(\frac{m}{2})}_{\frac{m}{2}}=0$. Hence $E(v_{z^{\frac{m}{2}}})=0$ as $\|E(v_{z^{\frac{m}{2}}})\|_2^2=c_{\frac{m}{2}}^{(\frac{m}{2})}=0$. This gives us a contradiction.

Thus $c_i^{(\frac{m}{2})}=0$ for all $0<i\neq \frac{m}{2}<m$. This means that $E(v_{z^{\frac{m}{2}}})=c^{(\frac{m}{2})}_{\frac{m}{2}}v_{z^{\frac{m}{2}}}$. Since $E(E(v_{z^{\frac{m}{2}}}))=E(v_{z^{\frac{m}{2}}})$, we deduce that $E(v_{z^{\frac{m}{2}}})=0$ or $v_{z^{\frac{m}{2}}}$. Both lead to a contradiction.

Hence we finish the proof in case $m$ is even.

\textbf{Case 2.} $m$ is odd.

The proof is essentially a simplified version of the above proof of Case 1. Indeed, notice that in this case $z^{k'}\neq -1$ for all $0<k'<m$. Assume the claim fails, then the same argument as in Subcase 1 of Case 1 yields a contradiction and Subcase 2 does not appear.

This finishes the proof of the Claim. \qed

Now, we have proved that $E(v_{z^k})=0$ holds for all $0<k<m$.
    
    Then $E(u_{q-qz^k}v_{z^k})=u_qE(v_{z^k})u_q^*=0$ for all $q\in \mathbf{k}$ and $0<k<m$. This shows that $A=\{E(u_q): q\in \mathbf{k}\}''\subseteq L(\mathbf{k})$ since we already know that $E(u_q)\in L(\mathbf{k})$ for all $q\in \mathbf{k}$. Hence $A$ is abelian and $A=Z(A)=L(\mathbf{k})^F$.
\end{proof}

\begin{remark}\label{remark: direct integral gives more intermediate ones}
    It is known that $L(\mathbf{k})\rtimes F\cong L(\mathbf{k})^F\bar{\otimes}M_{|F|}(\mathbb{C})$, say by an argument using \cite[Theorem 3.29]{Gla}. Thus there do exist abundance of proper intermediate von Neumann subalgebras $A$ with $L(\mathbf
    k)^F\subsetneq A\subsetneq L(\mathbf{k})\rtimes F$ by  the direct integral construction for von Neumann algebras. Hence the $\mathbf{k}\rtimes \mathbf{k}^*$-invariance assumption, which plays the role of ``rigidifying Galois group" that kills off the direct integral intermediate algebras, is necessary and crucial for the classification in the above lemma. 
    We also point out that the inclusion $L(\mathbf{k})^F\subseteq L(\mathbf{k})\rtimes F$ considered here is not covered by all previous known Galois's type theorems as in \cites{BBH, CD, Cho, GK, HSX, ILP, Suz2020}. It is more close to the Bisch-Haagerup type inclusion as considered by Suzuki in his quite recent work \cite[Theorem D]{Suz2026}. However, since $L(\mathbf{k})$ is not a simple C$^*$-algebra, this theorem is still not applicable in our case. Hence, it is reasonable to think of this as a emerging new direction in the study of intermediate subalgebras, following \cites{AJ_IFT, AJZ} in spirit, where non-commutative intermediate factor theorems are proved in both C$^*$ and von Neumann algebra setting.
    \end{remark}

Now we can prove the main result of this paper.

\begin{theorem}\label{thm: invariant vN subalgebras inside L(k rtimes k*)}
Let $\mathbf{k}$ be a countable discrete field of characteristic zero. Then the $G$-invariant von Neumann subalgebras in $L(G)$ for $G=\mathbf{k}\rtimes \mathbf{k}^*$are listed below:

$\mathbb{C}$, $L(\mathbf{k})^{F}$, $L(\mathbf{k}\rtimes H)$, where $F\subseteq \mathbf{k}^*$ is any finite subgroup and $H\subseteq \mathbf{k}^*$ is any  subgroup.
\end{theorem}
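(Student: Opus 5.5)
Let $A\subseteq L(G)$ be $G$-invariant, with $G=\mathbf{k}\rtimes\mathbf{k}^*$, and let $E:L(G)\to A$ be the trace-preserving conditional expectation. The plan is to locate the intersection $A\cap L(\mathbf{k})$ and the center $Z(A)$ among the $\mathbf{k}^*$-invariant subalgebras of $L(\mathbf{k})$ classified in Corollary \ref{cor: classifying all factor maps}, and then use the lemmas already proved to pin down $A$.

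\textbf{Step 1 (character vanishing).} By Proposition \ref{prop: masas} and invariance, $E(u_q)\in L(\mathbf{k})$ and $E(v_s)\in L(\mathbf{k}^*)$. For $g=u_qv_s$ with $q\neq 0$ and $s\neq 1$, conjugate by $u_p$ to get $u_{q+p(1-s)}v_s$. So $g$ is conjugate to $v_s$, and it suffices to treat elements $v_s$ and $u_q$.

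Consider the character $\phi(g)=\tau(v_g^*E(v_g))$. For $v_s$ with $s\neq1$, the conjugates $u_pv_su_p^*=u_{p(1-s)}v_s$ are pairwise distinct. Moreover $\phi\big((u_{p(1-s)}v_s)^{-1}u_{p'(1-s)}v_s\big)$ is the value of $\phi$ at a nonzero element of $\mathbf{k}$. Hence if $\phi|_{\mathbf{k}\setminus\{0\}}\equiv0$, Proposition \ref{prop: vanishing characters} gives $\phi\equiv 0$ off $e$, so $E(v_g)=0$ for all $g\neq e$ and $A=\mathbb{C}$.

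\textbf{Step 2 (when $\phi|_{\mathbf{k}}\not\equiv 0$).} In this case $B:=E(L(\mathbf{k}))''$ is a nontrivial $\mathbf{k}^*$-invariant subalgebra of $L(\mathbf{k})$. I expect $B=A\cap L(\mathbf{k})$, since $E(L(\mathbf{k}))\subseteq L(\mathbf{k})$. By Corollary \ref{cor: classifying all factor maps}, $B=L(\mathbf{k})$ or $B=L(\mathbf{k})^F$ for a nontrivial finite $F$.

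\emph{Case $B=L(\mathbf{k})$.} Then $L(\mathbf{k})\subseteq A$, so $A$ is generated by $L(\mathbf{k})$ together with $E(v_s)\in L(\mathbf{k}^*)\cap A$. I would run the argument of Lemma \ref{lem: invariant A contains L(k)} with $\mathbf{k}^*$ in place of $F$, comparing Fourier coefficients of $u_hE(v_s)u_h^*=E(u_{h-sh}v_s)$ computed two ways. This should show that each $E(v_s)$ is either $0$ or $v_s$, because the coefficient comparison forces $c_t^{(s)}c_{t'}^{(t)}=0$ for $t\neq t'$, exactly as there. Then $H:=\{s:v_s\in A\}$ is a subgroup and $A=L(\mathbf{k}\rtimes H)$. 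Note that Lemma \ref{lem: invariant A contains L(k)} assumed $E(v_s)\in L(F)$ to get a finite sum. Here I would instead work with $\ell^2$ coefficients; the identities are coefficientwise, so finiteness should not matter.

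\emph{Case $B=L(\mathbf{k})^F$.} Here $A$ does not contain $L(\mathbf{k})$. I would show that $Z(A)\supseteq B$: each $E(u_q)$ commutes with $A$ by a Fourier argument on $E(u_qa)$ for $a\in A$, or via $A\subseteq B'\cap L(G)$. Then show $Z(A)=L(\mathbf{k})^F$: since $Z(A)$ is $G$-invariant and abelian, the abelian cases of Steps 1--2 should place it in $L(\mathbf{k})$, and it contains $B$, so $Z(A)=L(\mathbf{k})^{F'}$ with $F'\subseteq F$. If $F'$ is trivial then $L(\mathbf{k})\subseteq A$, contradicting $B\neq L(\mathbf{k})$; the relation $F'\subsetneq F$ is excluded similarly using $B=A\cap L(\mathbf{k})$. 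Lemma \ref{lem: invariant A with Z(A)=L(k)^{F}} then gives $A=L(\mathbf{k})^F$ or $A=L(\mathbf{k}\rtimes F)$, and the latter contradicts $A\cap L(\mathbf{k})=L(\mathbf{k})^F$.

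\textbf{Main obstacle.} The hard part is the second case: proving that $A$ commutes with $B=E(L(\mathbf{k}))''$ and that $Z(A)$ is exactly $L(\mathbf{k})^F$. My first attempt would be to use Lemma \ref{lem: (L(k)^F)' cap L(G) = L(k) rtimes F} on $B'\cap L(G)$ after establishing $A\subseteq B'$. This requires showing that $E(v_s)=0$ for $s\notin F$, which I would get from the character argument applied to the conjugates $u_{p(1-s)}v_s$ once $\phi$ on $\mathbf{k}$ is known to be $\mathbf{1}_{F\text{-orbit structure}}$-type.

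Finally, the converse direction: each algebra in the list is clearly $G$-invariant.
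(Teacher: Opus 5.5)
Your Step 1 and your treatment of the case $L(\mathbf{k})\subseteq A$ are sound. The latter is actually simpler than the paper's route. Since $u_hE(v_s)u_h^*=E(u_{h(1-s)}v_s)=u_{h(1-s)}E(v_s)$, comparing Fourier coefficients at $u_{h(1-t)}v_t$ (with $h\neq 0$) gives $c^{(s)}_t=0$ for $t\neq s$. Idempotence of $E$ then forces $E(v_s)\in\{0,v_s\}$, so $A=L(\mathbf{k}\rtimes H)$ for every subgroup $H$, with no splitting theorem needed.

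The gap is the remaining case $A\cap L(\mathbf{k})=L(\mathbf{k})^F$ with $F\neq\{1\}$. This is where the real difficulty of the theorem lies, and you give no working argument for it.

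\begin{itemize}
\item Centrality of $B=L(\mathbf{k})^F$ in $A$ is not a general fact: for $A=L(\mathbf{k}\rtimes H)$ the algebra $A\cap L(\mathbf{k})$ is not central. So any proof must use $B\neq L(\mathbf{k})$. Your ``Fourier argument on $E(u_qa)$'' is unspecified, and deducing centrality from $A\subseteq B'\cap L(G)$ is circular.
\item Your proposed route to $E(v_s)=0$ for $s\notin F$ fails. Here $E(u_q)=\frac{1}{|F|}\sum_{f\in F}u_{qf}$, so $\phi(u_q)=\frac{1}{|F|}$ for every $q\neq 0$. For the conjugates $s_n=u_{p_n(1-s)}v_s$ one gets $\phi(s_n^{-1}s_m)=\phi(u_{s^{-1}(1-s)(p_m-p_n)})=\frac{1}{|F|}\neq 0$, so Proposition \ref{prop: vanishing characters} does not apply.
\item The data you use (that $\phi$ is a character, plus $\phi|_{\mathbf{k}}$) cannot suffice in principle. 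The function $\frac{1}{|F|}1_G+(1-\frac{1}{|F|})\delta_e$ is a character with the same restriction to $\mathbf{k}$, yet its value at every $v_s$ with $s\neq 1$ is $\frac{1}{|F|}$.
\end{itemize}

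The detour through $Z(A)$ does not rescue this, for two reasons.

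First, your ``abelian cases of Steps 1--2'' do not cover an abelian invariant $C$ with $C\cap L(\mathbf{k})=L(\mathbf{k})^{F'}$. That is the same unresolved case. Lemma \ref{lem: invariant A with Z(A)=L(k)^{F}} presupposes $Z(C)=L(\mathbf{k})^{F'}$, which is the conclusion you want. For abelian $C$ the commutation is free, giving $C\subseteq L(\mathbf{k}\rtimes F')$, but you still need an explicit computation such as the paper's Case 1, Subcase 3.

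Second, and more seriously, without $B\subseteq Z(A)$ nothing in your argument excludes $Z(A)=\mathbb{C}$, i.e.\ a $G$-invariant subfactor $A$ with $A\cap L(\mathbf{k})=L(\mathbf{k})^F$.

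The paper organizes the proof around $Z(A)$ precisely for this reason. It handles the factor case with the Chifan--Das--Sun splitting $L(N)=A\bar{\otimes}(A'\cap L(N))$, with $\mathbf{k}\subseteq N$:
\begin{itemize}
\item The splitting produces a second character $\psi$, coming from the relative commutant, with $\phi\psi=0$ on $N\setminus\{e\}$.
\item Since $G$-invariant characters on $\mathbf{k}$ have the form $c\delta_e+(1-c)1_{\mathbf{k}}$, either $\phi|_{\mathbf{k}}=\delta_e$ or $\psi|_{\mathbf{k}}=\delta_e$.
\item In your situation $\phi|_{\mathbf{k}}\neq\delta_e$, so $\psi|_{\mathbf{k}}=\delta_e$. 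This gives $A'\cap L(N)=\mathbb{C}$ and $A=L(N)\supseteq L(\mathbf{k})$, a contradiction.
\end{itemize}
This second character is the missing idea.

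Note also that once $A\subseteq L(\mathbf{k}\rtimes F)$ is known, the rest is already in the paper. The Claim in the proof of Lemma \ref{lem: invariant A with Z(A)=L(k)^{F}} uses only that inclusion and $G$-invariance. Together with $L(\mathbf{k})\not\subseteq A$, it gives $A\subseteq L(\mathbf{k})$, hence $A=L(\mathbf{k})^F$.
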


\begin{proof}
Let $A\subseteq L(G)$ be a $G$-invariant von Neumann subalgebra. 

Let $\tau$ be the canonical trace on $L(G)$. Denote by $\phi(g)=\tau(g^{-1}E(g))$ and $\psi(g)=\tau(g^{-1}E'(g))$ for all $g\in G$, where $E: L(G)\twoheadrightarrow A$ and $E': L(G)\twoheadrightarrow A'\cap L(G)$ denote the trace preserving conditional expectations.
Note that both $\phi$ and $\psi$ are characters by \cite[Proposition 3.2]{JZ}.

\textbf{Case 1}. $A$ is abelian.

Our goal is to show $A=\mathbb{C}$, $L(\mathbf{k})$ or $L(\mathbf{k})^{F}$ for some non-trivial finite subgroup $F\subseteq \mathbf{k}^*$.

By Corollary \ref{cor: classifying all factor maps}, we deduce that $A\cap L(\mathbf{k})=\mathbb{C}$, $L(\mathbf{k})$ or $L(\mathbf{k})^{F}$ for some non-trivial finite subgroup $F\subseteq \mathbf{k}^*$. 

\textbf{Subcase 1}. $A\cap L(\mathbf{k})=\mathbb{C}$.

Note that  $u_qE(u_p)u_{q}^*=E(u_p)$ for all $p,q\in\mathbf{k}$, which implies that $E(u_p)\in L(\mathbf{k})'\cap L(G)=L(\mathbf{k})$ as $L(\mathbf{k})$ is a masa in $L(G)$ by Proposition \ref{prop: masas}. Thus $E(u_p)\in L(\mathbf{k})\cap A=\mathbb{C}$. By taking trace on both sides, we deduce that $E(u_p)=0$ for all $0\neq p\in\mathbf{k}$. Hence $\phi(p)=0$ for all $0\neq p\in\mathbf{k}$. Note that here $p=(p,1)\in G$.

We show that $A=\mathbb{C}$ in this case.

Indeed, take any $g=(p,r)\in G$ with $r\neq 1$. Note that $\mathbb{Q} \subseteq \mathbf{k}$ since the character of $\mathbf{k}$ is zero. For each $n\in\mathbb{Z}\subseteq \mathbb{Q} \subseteq \mathbf{k}$, write $g_n=(n,1)g(n,1)^{-1}=(n+p-rn,r)\in G$. Then $g_n^{-1}g_m=(r^{-1}(1-r)(m-n),1)\in \mathbf{k}\setminus \{0\}$ if and only if $m\neq n$. Therefore, $\phi(g_n^{-1}g_m)=0$ for all $n\neq m$ by the argument above. Hence $\phi(g)=0$ by Proposition \ref{prop: vanishing characters}. This shows that $\phi(g)=0$ for all $g\neq e\in G$. Equivalently, $E(g)=0$ and hence $A=\mathbb{C}$.

\textbf{Subcase 2}. $A\cap L(\mathbf{k})=L(\mathbf{k})$; equivalently, $L(\mathbf{k})\subseteq A$.

Since $A$ is abelian and $L(\mathbf{k})$ is a masa in $L(G)$ by Proposition \ref{prop: masas}, we deduce that
 $A=L(\mathbf{k})$.

\textbf{Subcase 3}. $A\cap L(\mathbf{k})=L(\mathbf{k})^{F}$ for some non-trivial finite subgroup $F\subseteq \mathbf{k}^*$. In particular, $L(\mathbf{k})^{F}\subseteq A$. Assume $F=\langle z\rangle$, where $z=e^{2\pi i/m}\in \mathbb{T}$ for some $m>1$.

For any $r,s\in \mathbf{k}^*$, we have that $v_sE(v_r)v_s^*=E(v_r)$, hence $E(v_r)\in L(\mathbf{k}^*)'\cap L(G)=L(\mathbf{k}^*)$ as $L(\mathbf{k}^*)$ is a masa in $L(G)$ by Proposition \ref{prop: masas}. 

Note that $L(\mathbf{k})^{F}$ is generated by $\{{\sum_{f\in F}u_{qf}:q\in \mathbf{k}\}}$. 

For any fixed $r\in\mathbf{k}^*$, write $E(v_r)=\sum_{s\in\mathbf{k}^*}c_sv_s$, where $c_s\in\mathbb{C}$.

Since $A$ is abelian and $L(\mathbf{k})^{F}\subseteq A$, we deduce that for all $q\in\mathbf{k}$ the following holds:
\begin{align*}
    (\sum_{f\in F}u_{qf})E(v_r)=E(v_r)(\sum_{f\in F}u_{qf}).
\end{align*}
A calculation shows that
\begin{align*}
    (\sum_{f\in F}u_{qf})E(v_r)&=\sum_{s\in\mathbf{k}^*}\sum_{f\in F}c_su_{qf}v_s;\\
    E(v_r)(\sum_{f\in F}u_{qf})&=\sum_{s\in\mathbf{k}^*}\sum_{f\in F}c_su_{qsf}v_s.
\end{align*}
Thus, comparing the coefficients of $u_qv_s$ on both sides, we deduce that $c_s=0$ for all $s\notin F$. 

\textbf{Claim.} $E(v_r)=0$ for all $1\neq r\in\mathbf{k}^*$.

\textbf{Proof of the claim.}
    From the above, 
    we may write $E(v_r)=\sum_{i=0}^{m-1}c_iv_{z^i}$. 

    It is clear to see that $\|E(v_r)\|_2^2=\tau(v_r^*E(v_r))=0$ for all $r\notin F$.
    Therefore, $E(v_r)=0$ for all $r\notin F$.

    We are left to show that $E(v_r)=0$ for all $r\in F\setminus\{1\}$.

    For any $0\neq q\in \mathbf{k}$, we have 
    \begin{align*}
        u_qE(v_r)u_q^*=\sum_{i=0}^{m-1}c_iu_qv_{z^i}u_q^*=\sum_{i=0}^{m-1}c_iu_{q-qz^i}v_{z^i}.   
    \end{align*}
    
    Note that both $u_qE(v_r)u_q^*$ and $E(v_r)$ belong to $A$ and hence commute with each other. So let us compute 
\begin{align*}
    u_qE(v_r)u_q^*E(v_r)=(\sum_{i=0}^{m-1}c_iu_{q-qz^i}v_{z^i})(\sum_{j=0}^{m-1}c_jv_{z^j})=\sum_{i,j=0}^{m-1}c_ic_ju_{q-qz^i}v_{z^{i+j}}.
\end{align*}
It is not hard to see that the coefficient of $u_{q-qz^i}v_{z^{i+j}}$ is indeed $c_ic_j$, i.e. there is only one term in the above expression supported at $u_{q-qz^i}v_{z^{i+j}}$. 

Similarly, 
\begin{align*}
    E(v_r)u_qE(v_r)u_q^*=(\sum_{j'=0}^{m-1}c_{j'}v_{z^{j'}})(\sum_{i'=0}^{m-1}c_{i'}u_{q-qz^{i'}}v_{z^{i'}})=\sum_{i',j'=0}^{m-1}c_{i'}c_{j'}u_{z^{j'}(q-qz^{i'})}v_{z^{i'+j'}}.
\end{align*}

If $u_{q-qz^i}v_{z^{i+j}}=u_{z^{j'}(q-qz^{i'})}v_{z^{i'+j'}}$, then we deduce that 
\begin{align*}
    1-z^i&=z^{j'}(1-z^{i'})=z^{j'}-z^{i'+j'}.\\
    i+j&=i'+j'~(mod~m).
\end{align*}

From the first equality, we may apply Lemma \ref{lem: difference between two complex numbers with modulus one} to deduce three cases.

Case 1. $i=0$.

Then $i'=0$.

Case 2. $i\neq 0$ and $z^{j'}=1$ and $z^i=z^{i'+j'}=z^{i+j}$.

This implies that $j'=0$ and $j=0$.

Case 3. $i\neq 0$ and $-z^i=z^{j'}$ and $1=-z^{i'+j'}=-z^{i+j}$.

Hence, if we consider $i\neq 0\neq j$ and $z^{i+j}\neq -1$, then we deduce that $c_ic_j=0$ since no $(i',j')$ exists such that $u_{q-qz^i}v_{z^{i+j}}=u_{z^{j'}(q-qz^{i'})}v_{z^{i'+j'}}$. In particular, for $i=j\neq 0$ and $z^{2i}\neq -1$, we deduce that $c_i=0$.

Hence, for odd $m$, note that $z^{2i}\neq -1$ holds true for all $i\neq 0$. Indeed, if $z^{2i}=-1$, then $z^{4i}=1$ and thus $m\mid 4i$. Since $0\leq i\leq m-1$, this implies that $4i\in \{2m, 3m\}$, which contradicts to $2\nmid m$. 

This shows that for odd $m$, we have $c_i=0$ for all $i\neq 0$. Hence $E(v_r)=c_0$. Taking trace on both sides, we deduce that $c_0=0$.

We are left to consider even $m$. Then note that $z^{2i}=-1$ means $2i=\frac{m}{2}~(mod~m)$. 
Since $0<i\leq m-1$, we deduce that $i=\frac{m}{4}$ or $i=\frac{3m}{4}$.
Hence, $c_i=0$ for all $0<i\leq m-1$ and $i\neq \frac{m}{4}, \frac{3m}{4}$. Also note that if we take $i=\frac{m}{4}$ and $j=\frac{3m}{4}$, then $z^{i+j}=z^{m}=1\neq -1$. Hence none of Case 1, 2 and 3 are valid, therefore $c_ic_j=0$, i.e. either $c_{\frac{m}{4}}=0$ or $c_{\frac{3m}{4}}=0$.

If $c_{\frac{m}{4}}=0$, then $E(v_r)=c_0+c_{\frac{3m}{4}}v_{z^{\frac{3m}{4}}}$.
Take $i=j=\frac{3m}{4}$ as above, we are in Case 3 and hence $c^2_{\frac{3m}{4}}=c_{i'}c_{j'}$, while $-z^{\frac{3m}{4}}=z^{j'}$. Clearly, this condition implies that $j'\not\in\{ 0,\frac{3m}{4}\}$. Hence $c^2_{\frac{3m}{4}}=0$, i.e. $c_{\frac{3m}{4}}=0$, then $E(v_r)=c_0$. Taking trace on both sides, we deduce that $E(v_r)=0$.

If $c_{\frac{3m}{4}}=0$, then $E(v_r)=c_0+c_{\frac{m}{4}}v_{z^{\frac{m}{4}}}$. Take $i=j=\frac{m}{4}$ as above, we are in Case 3 and hence $c^2_{\frac{m}{4}}=c_{i'}c_{j'}$, while $-z^{\frac{m}{4}}=z^{j'}$. This condition implies that $j'\not\in \{0, \frac{m}{4}\}$. Hence $c^2_{\frac{m}{4}}=0$. Arguing similar as above, we deduce that $E(v_r)=0$.

This finishes the proof of the above Claim.\qed

Then for any $g=(p,r)\in G$ with $r\neq 1$, we get that \[E(g)=E(u_pv_r)=E(u_{p/(1-r)}v_ru_{p/(1-r)}^*)=u_{p/(1-r)}E(v_r)u_{p/(1-r)}^*=0.\] Thus $A\subseteq L(\mathbf{k})$. Hence $A=A\cap L(\mathbf{k})=L(\mathbf{k})^{F}$.

\textbf{Case 2}. $A$ is a subfactor.

Following Chifan-Das-Sun's approach, see the proof of \cite[Theorem 3.1]{CDS}, there exists some normal subgroup $N\lhd G$ such that $L(N)=A\bar{\otimes}(A'\cap L(N))$. Then from the calculation used in the proof of \cite[Theorem 3.3]{DJ}, we know that $\phi(g)\psi(g)=0$ for all $e\neq g\in N$. 

Observe that $N\cap \mathbf{k}\neq \{e\}$. Indeed, otherwise, since $N$ and $\mathbf{k}$ are normal in $G$, we have that $N$ commutes with $\mathbf{k}$, which implies $N\subseteq C_G(\mathbf{k})=\mathbf{k}$. Hence $A\subseteq L(N)\subseteq L(\mathbf{k})$ which is abelian, contradicting to our assumption that $A$ is a subfactor. 

Next, observe that since $\mathbf{k}^*\curvearrowright\mathbf{k}^*$ is transitive and the normal subgroup $N\cap \mathbf{k}\neq \{e\}$, we deduce that $N\cap \mathbf{k}=\mathbf{k}$, i.e. $\mathbf{k}\subseteq N$. 

Then, note that any $G$-invariant character on $\mathbf{k}$ can be written as $c\delta_{e}+(1-c)1_{\mathbf{k}}$ by  \cite[Lemma 12.B.2]{BD}. So from $\phi(q)\psi(q)=0$ for all $0\neq q\in\mathbf{k}$, we deduce that $\phi|_{\mathbf{k}}=\delta_{e}$ or $\psi|_{\mathbf{k}}=\delta_{e}$.

\textbf{Subcase 1}. $\phi|_{\mathbf{k}}=\delta_e$.

Then $E(u_q)=0$ for all $0\neq q\in\mathbf{k}$.

By using Proposition \ref{prop: vanishing characters} and arguing similarly as in the proof of Subcase 1 in Case 1, it is not hard to show that $\phi((q,r))=0$ for all $q\in\mathbf{k}$ and $r\neq 1$. Thus $A=\mathbb{C}$.

\textbf{Subcase 2}. $\psi|_{\mathbf{k}}=\delta_e$.

Similar to the above subcase 1, we get that $A'\cap L(N)=\mathbb{C}$ and thus $L(N)=A\bar{\otimes}\mathbb{C}=A$, so $A=L(N)$ for some normal subgroup $\mathbf{k}\subseteq N\subseteq G$. Since $A$ is a subfactor, $N$ is an i.c.c. group. Then it is clear that $N=\mathbf{k}\rtimes H$ for some infinite subgroup $H\subseteq \mathbf{k}^*$.

\textbf{Case 3}. General case.

Note that the center $Z(A)$ is still $G$-invariant and abelian. By Case 1, we may further consider three subcases.

\textbf{Subcase 1}. $Z(A)=\mathbb{C}$; equivalently, $A$ is a $G$-invariant subfactor. Then the proof of Case 2 shows that $A=\mathbb{C}$ or $A=L(N)$ for some normal subgorup $\mathbf{k}\subseteq N\subseteq G$.

\textbf{Subcase 2}. $Z(A)=L(\mathbf{k})$.

Then $A\subseteq Z(A)'\cap L(G)=L(\mathbf{k})$, hence $A$ is abelian. Thus $A=Z(A)=L(\mathbf{k})$.

\textbf{Subcase 3}.
$Z(A)=L(\mathbf{k})^{F}$ for some non-trivial finite subgroup $F\subseteq \mathbf{k}^*$.

By Lemma \ref{lem: invariant A with Z(A)=L(k)^{F}}, we deduce that $A=L(\mathbf{k})^{F}$ or $A=L(\mathbf{k}\rtimes{F})$, which completes the whole proof. 
\end{proof}

\begin{corollary}\label{cor: invariant vN subalgebras in L(G×H)}
    Let $G=\mathbf{k}\rtimes\mathbf{k}^*$ as in Theorem \ref{thm: invariant vN subalgebras inside L(k rtimes k*)}. Let $H$ be a countably infinite non-amenable group with exactly 2 conjugacy classes (as in \cite{Osin}). Then the $K$-invariant von Neumann subalgebras in $L(K)$ for $K=G\times H$ are listed below: 

    $L(\mathbf{k})^{F}$, $L(\mathbf{k})^{F} \bar{\otimes} L(H)$, $L(N)$, $L(N\times H)$, where $F\subseteq \mathbf{k}^*$ is any finite subgroup and $N\lhd G$ is any  normal subgroup.
\end{corollary}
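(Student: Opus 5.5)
The plan is to reduce to Theorem \ref{thm: invariant vN subalgebras inside L(k rtimes k*)} by a splitting argument for $K=G\times H$. The key input is that $H$ has exactly two conjugacy classes. Hence $H$ is i.c.c., $L(H)$ is a II$_1$ factor, and the only $H$-invariant von Neumann subalgebras of $L(H)$ are $\mathbb{C}$ and $L(H)$. Indeed, if $B\subseteq L(H)$ is $H$-invariant with expectation $E_B$, then $\phi(h)=\tau(h^{-1}E_B(h))$ is a character of $H$ by \cite[Proposition 3.2]{JZ}. It is therefore constant on $H\setminus\{e\}$, say equal to $c$, so $\phi=c+(1-c)\delta_e$. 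Since $L(H)$ is a factor, $E_B$ is either the trace or the identity, which forces $c=0$ or $c=1$. I would verify this carefully, or argue that $E_B(h)$ commutes with $C_H(h)$ and use the conjugation orbit, if the character extremality argument feels too quick.

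Next, let $A\subseteq L(K)$ be $K$-invariant, and consider $E_A$ together with the character $\Phi(g,h)=\tau((g,h)^{-1}E_A((g,h)))$. First I would show that $\Phi(g,h)=\Phi(g,e)\,\Phi(e,h)$, or at least that $A$ splits as a tensor product. The approach I would try first is the method of \cite[Theorem 3.1]{CDS} and \cite[Theorem 3.3]{DJ}: look at $A\cap L(H)$ and $A\cap L(G)$. The first is $H$-invariant and $G$-invariant, hence equals $\mathbb{C}$ or $L(H)$; the second is $G$-invariant, so it appears in the list of Theorem \ref{thm: invariant vN subalgebras inside L(k rtimes k*)}. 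I would then show that $E_A(L(G))\subseteq L(G)$ and $E_A(L(H))\subseteq L(H)$.

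For $u\in L(H)$, $E_A(u)$ commutes with $L(G)$, since $L(G)$ is generated by unitaries $v_g$ and $v_gE_A(u)v_g^*=E_A(v_guv_g^*)=E_A(u)$. Because $L(G)'\cap L(K)=Z(L(G))\bar\otimes L(H)=L(H)$ (as $G$ is i.c.c.), we get $E_A(u)\in L(H)\cap A$. Symmetrically, $E_A(L(G))\subseteq L(H)'\cap L(K)=L(G)$ because $H$ is i.c.c. Then $E_A(v_gw_h)=E_A(v_g)E_A(w_h)$. To see this, note that the restriction of $E_A$ to $L(G)$ is the expectation onto $A_G:=A\cap L(G)$, and similarly on $L(H)$. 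Using commutation and the bimodule property, I would check that $E_{A_G}\otimes E_{A_H}$ is a conditional expectation onto $A_G\bar\otimes A_H$ and coincides with $E_A$ on products $v_gw_h$. This yields $A=A_G\bar\otimes A_H$. This identification step is the main obstacle: a priori $A$ could be larger than $A_G\bar\otimes A_H$. I would handle it by showing $E_A(v_gw_h)\in A_G\bar\otimes A_H$ via a Fourier-coefficient argument: $E_A(v_gw_h)$ commutes with the conjugates by $L(G)$-unitaries in $C_G(g)$. Failing that, I would invoke the known splitting result for invariant subalgebras of direct products in which one factor has the ISR property and is i.c.c.

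Finally, combine the pieces: $A_H\in\{\mathbb{C},L(H)\}$, and $A_G$ runs over $\mathbb{C}$, $L(\mathbf{k})^F$ and $L(\mathbf{k}\rtimes H')$ from Theorem \ref{thm: invariant vN subalgebras inside L(k rtimes k*)}. Noting that $L(\mathbf{k}\rtimes H')=L(N)$ with $N\lhd G$, and that $\mathbb{C}=L(\{e\})$ and $L(\mathbf{k})=L(\mathbf{k})^{\{1\}}$ are already covered, this gives exactly the stated list. Each item is clearly $K$-invariant.
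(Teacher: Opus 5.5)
\textbf{Verdict: there is a genuine gap.} It is the step you flag yourself as the main obstacle. You correctly show $E_A(L(G))\subseteq A_G$ and $E_A(L(H))\subseteq A_H$. But the mixed terms $E_A(v_gw_h)$ are never controlled. The identity $E_A(v_gw_h)=E_A(v_g)E_A(w_h)$ does not follow from ``commutation and the bimodule property'': a conditional expectation is not multiplicative, and the bimodule property only lets you pull out factors that already lie in $A$. Your fallback does not help either. Knowing that $E_A(v_gw_h)$ commutes with $L(C_G(g)\times C_H(h))$ only places it in a relative commutant, which says nothing about membership in $A_G\bar\otimes A_H$. The ``known splitting result'' is never specified.

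The missing idea is to split on $A\cap L(H)\in\{\mathbb{C},L(H)\}$ first.
\begin{itemize}
\item If $L(H)\subseteq A$, the bimodule property does apply, since $w_h\in A$. Then $E_A(v_gw_h)=E_A(v_g)w_h\in A_G\bar\otimes L(H)$, so $A=A_G\bar\otimes L(H)$ by normality of $E_A$. The paper uses the Ge--Kadison splitting theorem \cite{GK} here; your observation gives this case directly.
\item If $A\cap L(H)=\mathbb{C}$, then $E_A(w_h)\in\mathbb{C}$, so $\Phi(e,h)=0$ for $h\neq e$. You must then show $E_A(v_gw_h)=0$ for all $g$ and all $h\neq e$, and this is the real content. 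The paper obtains it from the vanishing-characters trick (Proposition \ref{prop: vanishing characters}). Since $H$ is i.c.c., choose $h_n\in H$ with the $h_nhh_n^{-1}$ pairwise distinct. The elements $s_n=(g,h_nhh_n^{-1})$ are then distinct conjugates of $(g,h)$ with $s_n^{-1}s_m\in H\setminus\{e\}$ for $n\neq m$. Hence $\Phi(s_n^{-1}s_m)=0$, and so $\|E_A(v_gw_h)\|_2^2=\Phi(g,h)=0$. This yields $A\subseteq L(G)$, i.e.\ $A=A_G$.
\end{itemize}
Nothing in your proposal supplies this second step.

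There is a second, smaller gap. Your argument that $L(H)$ has only trivial $H$-invariant subalgebras is circular: ``since $L(H)$ is a factor, $E_B$ is either the trace or the identity'' is exactly what is to be proved. For every $c\in[0,1]$ the function $c+(1-c)\delta_e$ is a genuine character of $H$. So constancy of $\phi$ on $H\setminus\{e\}$ cannot by itself force $c\in\{0,1\}$. Further input is needed. For instance, one can show that the center $Z(B)$, an abelian $H$-invariant subalgebra, is trivial; this is where non-amenability of $H$, a hypothesis your sketch never uses, naturally enters. One then finishes with a subfactor argument of the type used in the main theorem. The paper does not reprove this and simply takes the ISR property of $H$ from \cite{DJ}, which you could also do.
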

\begin{proof}
    Let $A\subseteq L(K)$ be a $K$-invariant von Neumann subalgebra. 
    
    Let $\tau$ be the canonical trace on $L(K)$. Denote by $\phi(s)=\tau(s^{-1}E(s))$ for all $s\in K$, where $E: L(K)\twoheadrightarrow A$ denotes the trace preserving conditional expectation.

    Note that $H$ must be i.c.c. and simple. Moreover, as explained in the proof of \cite[Theorem B]{DJ}, $H$ has the ISR property. Hence the only $H$-invariant von Neumann subalgebras in $L(H)$ are either $\mathbb{C}$ or $L(H)$.

    Since $A\cap L(H)$ is a $H$-invariant von Neumann subalgebra, we may consider two cases.

    \textbf{Case 1}. $A\cap L(H)=\mathbb{C}$.
    
    Note that $E(h)\in L(G)'\cap L(K)\subseteq L(H)$ for all $h\in H$. Thus $E(h)\in A\cap L(H)=\mathbb{C}$. By taking trace on both sides, we deduce that $E(h)=0$ for all $h\neq e$.  Thus we have that $\phi|_{H}=\delta_e$ by the definition of $\phi$. 

    \textbf{Claim.} $A\subseteq L(G)$.

    \textbf{Proof of the Claim.}
        We only need to show that $\phi(g,h)=0$ for all $g\in G$ and $e\neq h\in H$. 
Indeed, once this holds, then $\|E((g,h))\|_2^2=\phi(g,h)=0$ and thus $E(g,h)=0$ for all $g\in G$ and $e\neq h\in H$. Hence for any $x=\sum_{(g,h)\in K}c_{(g,h)}(g,h)\in A$, if $(g,h)\in K \setminus G$, i.e. $h\neq e$, then $c_{(g,h)}=\tau((g,h)^{-1}x)=\tau(E((g,h)^{-1}x))=\tau(E((g,h)^{-1})x)=\tau(0·x)=0$. Thus $x\in L(G)$.
        
        Take any $(g,h)\in K$ with $h\neq e$.
        Since $H$ is i.c.c. and $h\neq e$, there exists $(h_n)_{n\in \mathbb{Z}}$ in $H$ such that $h_nhh_n^{-1} \neq h_mhh_m^{-1}$ if $m\neq n$.  
        For each $n\in \mathbb{Z}$, write $s_n=(e,h_n)(g,h)(e,h_n)^{-1}=(g,h_nhh_n^{-1})\in K$. It is easy to check that $\phi(s_n^{-1}s_m)=0$ if $m\neq n$. Hence $\phi(g,h)=0$ by Proposition \ref{prop: vanishing characters}.  
    \end{proof}

    Then by Theorem \ref{thm: invariant vN subalgebras inside L(k rtimes k*)}, $A=L(\mathbf{k})^{F}$ for some finite subgroup $F\subseteq \mathbf{k}^*$ or $A=L(N)$ for some normal subgroup $N\lhd G$. This finishes the proof of Case 1.
    
    \textbf{Case 2}. $A\cap L(H)=L(H)$.
    Then $L(H)\subseteq A\subseteq L(K)=L(G)\bar{\otimes}L(H)$. 
    
    By Ge-Kadison's splitting theorem \cite{GK}, $A=Q\bar{\otimes}L(H)$ for some von Neumann subalgebra $Q\subseteq L(G)$. 
    Note that $Q$ is $G$-invariant as $A$ is. Thus Theorem \ref{thm: invariant vN subalgebras inside L(k rtimes k*)} yields that $Q=L(\mathbf{k})^{F}$ for some finite subgroup $F\subseteq \mathbf{k}^*$ or $Q=L(N)$ for some normal subgroup $N\lhd G$. This finishes the proof of Case 2.
\qed

\begin{remark}\label{remark: construct 2^n many exotic invariant subalgebras}
Let $H$ be the group considered as in the above corollary.
    For any $n\geq 1$, set $K_n=G\oplus \oplus_{i=1}^nH$, where $G$ is the i.c.c. group constructed in \cite[Theorem B]{ADJS} such that $L(G)$ admits only one $G$-invariant von Neumann subalgebras not arising from subgroups of $G$. Then the same proof as above shows that the number of $K_n$-invariant von Neumann subalgebras in $L(K_n)$ not arising from subgroups of $K_n$ is $2^n$.
\end{remark}

Taking $\mathbf{k}=\mathbb{Q}$ in Theorem \ref{thm: invariant vN subalgebras inside L(k rtimes k*)}, we classify all invariant von Neumann subalgebras of $L(\mathbb{Q}\rtimes \mathbb{Q}^{\times})$.

\begin{corollary}
The $G$-invariant von Neumann subalgebras in $L(G)$ for $G=\mathbb{Q}\rtimes \mathbb{Q}^{\times}$ are listed below:

$\mathbb{C}$, $L(\mathbb{Q})^{\{\pm 1\}}$, $L(\mathbb{Q})$, $L(\mathbb{Q}\rtimes H)$, where $H\subseteq \mathbb{Q}^{\times}$ is any non-trivial subgroup.
\end{corollary}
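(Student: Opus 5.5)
This corollary is the specialization $\mathbf{k}=\mathbb{Q}$ of Theorem \ref{thm: invariant vN subalgebras inside L(k rtimes k*)}. The plan is to apply the theorem and then make its list explicit for $\mathbb{Q}$. The theorem requires a countable discrete field of characteristic zero, and $\mathbb{Q}$ qualifies. Hence the $G$-invariant von Neumann subalgebras of $L(\mathbb{Q}\rtimes\mathbb{Q}^\times)$ are exactly
\[
\mathbb{C},\qquad L(\mathbb{Q})^{F},\qquad L(\mathbb{Q}\rtimes H),
\]
where $F$ ranges over the finite subgroups of $\mathbb{Q}^\times$ and $H$ over all subgroups of $\mathbb{Q}^\times$.

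\textbf{Step 1: the finite subgroups.} Any finite subgroup of $\mathbb{Q}^\times$ consists of roots of unity lying in $\mathbb{Q}$. The only such roots are $\pm1$, by \cite[Lemma 6.1]{Mor} or directly. So the finite subgroups are $\{1\}$ and $\{\pm1\}$, and the family $L(\mathbb{Q})^F$ reduces to $L(\mathbb{Q})^{\{1\}}=L(\mathbb{Q})$ and $L(\mathbb{Q})^{\{\pm1\}}$.

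\textbf{Step 2: the semidirect products.} For the family $L(\mathbb{Q}\rtimes H)$, the trivial subgroup $H=\{1\}$ again gives $L(\mathbb{Q})$. So it suffices to list $H$ non-trivial, together with $L(\mathbb{Q})$ listed separately. This removes the duplication and gives exactly the four families in the statement.

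There is no real obstacle: all the analytic work is contained in Theorem \ref{thm: invariant vN subalgebras inside L(k rtimes k*)}. The only points to check are the torsion computation in $\mathbb{Q}^\times$ and that the bookkeeping of overlaps between the families is correct.

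Optionally, one can also record why $L(\mathbb{Q})^{\{\pm1\}}$ is the unique invariant subalgebra not of the form $L(N)$. It is abelian and properly contained in the masa $L(\mathbb{Q})$ (Proposition \ref{prop: masas}), yet not equal to $\mathbb{C}$. So it could only be $L(N)$ for a subgroup $N\subseteq\mathbb{Q}$ with $N\neq\{0\},\mathbb{Q}$. Such an $N$ is not $\mathbb{Q}^\times$-invariant, whereas $L(\mathbb{Q})^{\{\pm1\}}$ is invariant under the whole action. This recovers the case $n=1$ of Theorem \ref{thmA}.
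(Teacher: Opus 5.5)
Your proposal is correct and is the paper's own argument: the paper obtains the corollary simply by taking $\mathbf{k}=\mathbb{Q}$ in Theorem~\ref{thm: invariant vN subalgebras inside L(k rtimes k*)}. Your two additions are just the bookkeeping the paper leaves implicit: the finite subgroups of $\mathbb{Q}^\times$ are only $\{1\}$ and $\{\pm1\}$, and the duplicate $L(\mathbb{Q})$ (from $F=\{1\}$ and from $H=\{1\}$) is listed once.
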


\begin{remark}
    The above corollary provides yet another example of $G$ such that there is precisely only one exotic invariant subalgebras in $L(G)$. Compared with the first such an example as constructed in \cite[Theorem B]{ADJS}, which relies  on    combinatorial arguments,
   our proof has more ergodic theoretic flavor. In fact, if we directly set $\mathbf{k}=\mathbb{Q}$ in the whole process of proof,  then most steps, e.g. Lemma \ref{lem: invariant A with Z(A)=L(k)^{F}} can actually be greatly simplified. We left the details for the readers to check. 
\end{remark}

The following proves the first part of Theorem \ref{thmA}.
\begin{corollary}\label{cor: group with n-many exotic invariant vN subalgebras inside L(G)}
Let $n\geq 1$. Then there exists an amenable i.c.c. group $G_{n}$ such that there are precisely $n$-many $G_{n}$-invariant von Neumann subalgebras inside $L(G_{n})$ not arising from subgroups of $G_n$.
\end{corollary}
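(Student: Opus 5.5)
The plan is to take $G_n:=\mathbf{k}\rtimes\mathbf{k}^*$ with $\mathbf{k}=\mathbb{Q}(\zeta_{2^n})$, the field supplied by Lemma \ref{lem: k* which has exactly n-many subgroups}. For this field, $\mathbf{k}^*$ has exactly $n$ non-trivial finite subgroups, namely $\langle\zeta_{2^j}\rangle$ for $1\le j\le n$. Then I will read off the count from Theorem \ref{thm: invariant vN subalgebras inside L(k rtimes k*)}.

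First I check the standing properties of $G_n$.
\begin{itemize}
\item $G_n$ is amenable, being metabelian: an abelian normal subgroup with abelian quotient.
\item $G_n$ is i.c.c. For $(p,1)$ with $p\neq 0$, conjugation by $(0,r)$ gives $(rp,1)$, and there are infinitely many such elements since $\mathbf{k}^*$ is infinite. For $(p,r)$ with $r\neq1$, conjugation by $(q,1)$ gives $(p+q(1-r),r)$, which is again an infinite family.
\end{itemize}
Since $\mathbf{k}$ has characteristic zero, Theorem \ref{thm: invariant vN subalgebras inside L(k rtimes k*)} applies. It lists the invariant subalgebras as $\mathbb{C}$, $L(\mathbf{k})^F$ for $F$ a finite subgroup, and $L(\mathbf{k}\rtimes H)$ for $H\le\mathbf{k}^*$. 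Among these, $\mathbb{C}=L(\{e\})$, $L(\mathbf{k})^{\{1\}}=L(\mathbf{k})$, and $L(\mathbf{k}\rtimes H)$ all arise from subgroups. The candidates for exotic subalgebras are therefore exactly the $n$ algebras $L(\mathbf{k})^F$ with $F$ non-trivial.

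The remaining step is to show that these $n$ algebras are pairwise distinct and that none of them equals $L(K)$ for a subgroup $K\le G_n$.

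Distinctness is elementary. Suppose $F\neq F'$, say $f\in F\setminus F'$. Then $\sum_{g\in F'}u_{g}$ lies in $L(\mathbf{k})^{F'}$, but its Fourier coefficients are not constant on $F$-orbits, because the coefficient at $1$ is $1$ while the coefficient at $f$ is $0$. Hence $L(\mathbf{k})^{F'}\neq L(\mathbf{k})^F$. The same argument applies with the roles exchanged when $F'\not\subseteq F$.

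For non-subgroup-ness, suppose $L(\mathbf{k})^F=L(K)$ with $F$ non-trivial. Then $u_k\in L(\mathbf{k})^F$ for every $k\in K$. A single group unitary $u_q$ lies in $L(\mathbf{k})^F$ only when its coefficient function $\delta_q$ is $F$-invariant, i.e. $qF=\{q\}$, which forces $q=0$ since $F$ is non-trivial and $\mathbf{k}$ is a field. So $K$ is trivial, and $L(\mathbf{k})^F=\mathbb{C}$. This contradicts the fact that $L(\mathbf{k})^F$ contains $\sum_{f\in F}u_{f}$, which is not a scalar.

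The only step needing care is this last one, recognising that "arising from a subgroup" must be tested against all subgroups $K$ and not only normal ones. The unitary-coefficient argument above handles that uniformly. With it in place, the count is exactly $n$.
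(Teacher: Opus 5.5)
Your proposal is correct and follows the paper's proof. You take $\mathbf{k}=\mathbb{Q}(\zeta_{2^n})$ from the lemma on fields with exactly $n$ non-trivial finite multiplicative subgroups, set $G_n=\mathbf{k}\rtimes\mathbf{k}^*$, read off the list from the classification theorem, and rule out $L(\mathbf{k})^F=L(K)$ by Fourier support; your i.c.c.\ check and pairwise-distinctness argument just make explicit what the paper calls ``easy to check'' (one cosmetic point: for $k\in K\le G_n$ you should first note that $\lambda_k\in L(\mathbf{k})$ forces $k\in\mathbf{k}$ before writing it as $u_q$).
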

\begin{proof}
    By Lemma \ref{lem: k* which has exactly n-many subgroups}, there exists a countable field $\mathbf{k}_n$ of characteristic zero such that there are precisely $n$-many non-trivial finite subgroups inside $\mathbf{k}_n^*$.  
    To finish the proof, just take $G_n=\mathbf{k}_n\rtimes\mathbf{k}_n^*$ and apply Theorem \ref{thm: invariant vN subalgebras inside L(k rtimes k*)}. Indeed, it is easy to check that $G_n$ is amenable and i.c.c. Besides, by considering the support of Fourier expansion of elements in $L(\mathbf{k}_n)^F$, where $F\subset \mathbf{k}_n^*$ is any non-trivial finite subgroup, it is clear that $L(\mathbf{k}_n)^F\neq L(H)$ for any subgroup $H\subseteq G_n$. In other words, $L(\mathbf{k}_n)^F$ does not arise from subgroups of $G_n$.
\end{proof}

\begin{remark}
    Let $\mathbf{k}$ be a countable discrete field of characteristic zero such that there are infinitely many finite subgroups inside $\mathbf{k}^*$ (such $\mathbf{k}$ exists since we can take $\mathbf{k}$ to be the $\mathbb{Q}$-algebraic extension generated by $\cup_n\mathbb{Q}(\zeta_n)$). Set $G=\mathbf{k}\rtimes\mathbf{k}^*$, then Theorem \ref{thm: invariant vN subalgebras inside L(k rtimes k*)} yields that there are infinitely many $G$-invariant von Neumann subalgebras inside $L(G)$ not arising from subgroups of $G$.
\end{remark}

The following proves the second part of Theorem \ref{thmA}.

\begin{corollary}\label{cor: non-amenable group with n-many exotic invariant vN subalgebras inside L(G)}
Let $n\geq 1$. Then there exists a non-amenable i.c.c. group $H_{n}$ such that there are precisely $2n$-many $H_{n}$-invariant von Neumann subalgebras inside $L(H_{n})$ not arising from subgroups of $H_n$.
\end{corollary}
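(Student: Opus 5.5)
We take $H_n:=G_n\times H$, where $G_n=\mathbf{k}_n\rtimes\mathbf{k}_n^*$ is the group from Corollary \ref{cor: group with n-many exotic invariant vN subalgebras inside L(G)}. Here $\mathbf{k}_n$ comes from Lemma \ref{lem: k* which has exactly n-many subgroups}, so $\mathbf{k}_n^*$ has exactly $n$ non-trivial finite subgroups $F_1,\dots,F_n$. The group $H$ is a countably infinite non-amenable group with exactly two conjugacy classes, as in \cite{Osin}.

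\textbf{Basic properties.} $H_n$ is non-amenable because it contains $H$. It is i.c.c. because $G_n$ is i.c.c. and $H$ is i.c.c. (an infinite group with two conjugacy classes has every non-trivial conjugacy class equal to $H\setminus\{e\}$). Since $\mathbf{k}_n$ has characteristic zero, Corollary \ref{cor: invariant vN subalgebras in L(G×H)} applies with $G=G_n$. It gives the complete list of $H_n$-invariant von Neumann subalgebras of $L(H_n)$:
$$L(\mathbf{k}_n)^F,\qquad L(\mathbf{k}_n)^F\bar{\otimes}L(H),\qquad L(N),\qquad L(N\times H),$$
where $F$ ranges over finite subgroups of $\mathbf{k}_n^*$ and $N$ over normal subgroups of $G_n$.

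\textbf{Counting.} Take $F=\{1\}$: then $L(\mathbf{k}_n)^F=L(\mathbf{k}_n)$ and $L(\mathbf{k}_n)\bar{\otimes}L(H)=L(\mathbf{k}_n\times H)$. Together with $L(N)$ and $L(N\times H)=L(N)\bar\otimes L(H)$, all of these arise from subgroups. The candidates for exotic algebras are therefore the $2n$ algebras $L(\mathbf{k}_n)^{F_i}$ and $L(\mathbf{k}_n)^{F_i}\bar{\otimes}L(H)$ for $i=1,\dots,n$.

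\textbf{Pairwise distinct.} For $i\neq j$ the algebras $L(\mathbf{k}_n)^{F_i}$ and $L(\mathbf{k}_n)^{F_j}$ are different: $\sum_{f\in F_i}u_{f}$ lies in the first, but its Fourier support $F_i$ is not a union of $F_j$-orbits, since $1\cdot F_j=F_j\not\subseteq F_i$ or vice versa. The first family consists of abelian algebras and the second does not, since it contains $L(H)$. So there are exactly $2n$ distinct algebras.

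\textbf{Not from subgroups.} This is the main step, and I would argue it via Fourier supports, as in Corollary \ref{cor: group with n-many exotic invariant vN subalgebras inside L(G)}. Suppose $A=L(K')$ for a subgroup $K'\subseteq H_n$. Then $A$ is exactly the closure of the span of the canonical unitaries it contains, and every element of $A$ has Fourier support in $K'$.

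Let $F$ be non-trivial finite and $q\neq 0$. The element $\sum_{f\in F}u_{qf}$ lies in both $L(\mathbf{k}_n)^F$ and $L(\mathbf{k}_n)^F\bar\otimes L(H)$, so $qf\in K'$ for all $f\in F$ and all $q$. This forces $\mathbf{k}_n\subseteq K'$, hence $u_q\in A$.

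But $u_q\notin L(\mathbf{k}_n)^F\bar\otimes L(H)$. Indeed, apply $\mathrm{id}\otimes\tau_H$, the conditional expectation onto $L(G_n)$. It maps this algebra onto $L(\mathbf{k}_n)^F$, and it fixes $u_q$. Since $F$ is non-trivial, $u_q\notin L(\mathbf{k}_n)^F$ because its coefficients are not $F$-invariant. This is a contradiction, so neither family arises from subgroups.

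Hence exactly $2n$ invariant subalgebras of $L(H_n)$ do not arise from subgroups.
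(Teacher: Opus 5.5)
Your proposal is correct and follows the paper's argument: take $H_n=G_n\times H$ and read off the $2n$ exotic algebras $L(\mathbf{k}_n)^{F_i}$ and $L(\mathbf{k}_n)^{F_i}\bar{\otimes}L(H)$ from the classification of invariant subalgebras of $L(G_n\times H)$, with your Fourier-support and $\mathrm{id}\otimes\tau_H$ arguments supplying details the paper leaves implicit. The one check you leave unstated, that $L(\mathbf{k}_n)^{F_i}\bar{\otimes}L(H)\neq L(\mathbf{k}_n)^{F_j}\bar{\otimes}L(H)$ for $i\neq j$, follows from the same slice map $\mathrm{id}\otimes\tau_H$, which recovers $L(\mathbf{k}_n)^{F_i}$ from the first algebra.
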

\begin{proof}
    Let $G_n$ be the group as in Corollary \ref{cor: group with n-many exotic invariant vN subalgebras inside L(G)}.
    Let $H$ be a countably infinite non-amenable group with exactly 2 conjugacy classes. 
    Set $H_n:=G_n\times H$ and apply Corollary \ref{cor: invariant vN subalgebras in L(G×H)}, then we obtain the desired result. 
\end{proof}


\begin{bibdiv}
\begin{biblist}

\bib{AB}{article}{
   author={Alekseev, V.},
   author={Brugger, R.},
   title={A rigidity result for normalized subfactors},
   journal={J. Operator Theory},
   volume={86},
   date={2021},
   number={1},
   pages={3--15},}

\bib{A-relative}{article}{
author={Amrutam, T.},
title={On relative invariant subalgebra rigidity property
},
year={2026},
status={arXiv: 2604.04835},
}

\bib{ADJS}{article}{
author={Amrutam, T.},
author={Dudko, A.},
author={Jiang, Y.},
author={Skalski, A.},
title={Invariant subalgebras rigidity for von Neumann algebras of groups arising as certain semidirect products},
year={2025},
status={arXiv: 2507.12824},
}


\bib{AHO}{article}{
author={Amrutam, T.},
author={Hartman, Y.},
author={Oppelmayer, H.},
title={On the amenable subalgebras of group von Neumann algebras},
 journal={J. Funct. Anal.},
   volume={288},
   date={2025},
   number={2},
   pages={Paper No. 110718, 20 pp},
}

\bib{AJ}{article}{
   author={Amrutam, T.},
   author={Jiang, Y.},
   title={On invariant von Neumann subalgebras rigidity property},
   journal={J. Funct. Anal.},
   volume={284},
   date={2023},
   number={5},
   pages={Paper No. 109804, 26 pp.},}

   \bib{AJ_IFT}{article}{
   author={Amrutam, T.},
   author={Jiang, Y.},
   title={Splitting of tensor products and intermediate factor theorem:
   continuous version},
   journal={J. Lond. Math. Soc. (2)},
   volume={111},
   date={2025},
   number={6},
   pages={Paper No. e70205, 35},}


   \bib{AJZ}{article}{
author={Amrutam, T.},
author={Jiang, Y.},
author={Zhou, S.},
title={Non-commutative Intermediate Factor theorem associated with W$^*$-dynamics of product groups}
date={2025},
note={arXiv: 2508.18978},
}

\bib{BBH}{article}{
   author={Bader, U.},
   author={Boutonnet, R.},
   author={Houdayer, C.},
   title={Charmenability of higher rank arithmetic groups},
   language={English, with English and French summaries},
   journal={Ann. H. Lebesgue},
   volume={6},
   date={2023},
   pages={297--330},}

  \bib{Bek}{article}{
   author={Bekka, B.},
   title={Operator-algebraic superridigity for ${\rm SL}_n(\Bbb Z)$, $n\geq
   3$},
   journal={Invent. Math.},
   volume={169},
   date={2007},
   number={2},
   pages={401--425},}

\bib{BD}{book}{
   author={Bekka, B.},
   author={de la Harpe, P.},
   title={Unitary representations of groups, duals, and characters},
   series={Mathematical Surveys and Monographs},
   volume={250},
   publisher={American Mathematical Society, Providence, RI},
   date={2020},
   pages={xi+474},}
   
\bib{BF}{article}{
author={ Bj{\"o}rklund, M.}, 
author={ Fish, A.}, 
title={Dynamics of multiplicative groups over fields and F{\o}lner-Kloosterman sums},
status={arXiv: 2512.07106},
date={2025},}

\bib{CD}{article}{
   author={Chifan, I.},
   author={Das, S.},
   title={Rigidity results for von Neumann algebras arising from mixing
   extensions of profinite actions of groups on probability spaces},
   journal={Math. Ann.},
   volume={378},
   date={2020},
   number={3-4},
   pages={907--950},}

\bib{CDS}{article}{
   author={Chifan, I.},
   author={Das, S.},
   author={Sun, B.},
   title={Invariant subalgebras of von Neumann algebras arising from
   negatively curved groups},
   journal={J. Funct. Anal.},
   volume={285},
   date={2023},
   number={9},
   pages={Paper No. 110098, 28 pp.},}


\bib{Cho}{article}{
   author={Choda, H.},
   title={A Galois correspondence in a von Neumann algebra},
   journal={Tohoku Math. J. (2)},
   volume={30},
   date={1978},
   number={4},
   pages={491--504},}
   
\bib{DJ}{article}{
  author={Dudko, A.},
  author={Jiang, Y.},
  title={A character approach to the ISR property},
  status={arXiv: 2410.14517v3},
  year={2024}}
  

\bib{GK}{article}{
   author={Ge, L.},
   author={Kadison, R.},
   title={On tensor products for von Neumann algebras},
   journal={Invent. Math.},
   volume={123},
   date={1996},
   number={3},
   pages={453--466},}


\bib{DM}{article}{
   author={Dudko, A.},
   author={Medynets, K.},
   title={Finite factor representations of Higman-Thompson groups},
   journal={Groups Geom. Dyn.},
   volume={8},
   date={2014},
   number={2},
   pages={375--389},}

  

\bib{Gla}{book}{
author={Glasner, E.},
title={Ergodic Theory via Joinings},
series={Mathematical Surveys and Monographs},
volume={101},
publisher={American Mathematical Society},
date={[2003] \copyright 2003},
pages={384},}

\bib{HSX}{article}{
   author={Hu, X.},
   author={Shi, R.},
   author={Xu, F.},
   title={On a composition of subfactors with group subfactors},
   journal={Sci. China Math.},
   volume={64},
   date={2021},
   number={2},
   pages={373--384},}
   
\bib{ILP}{article}{
   author={Izumi, M.},
   author={Longo, R.},
   author={Popa, S.},
   title={A Galois correspondence for compact groups of automorphisms of von
   Neumann algebras with a generalization to Kac algebras},
   journal={J. Funct. Anal.},
   volume={155},
   date={1998},
   number={1},
   pages={25--63},}
  
\bib{jiangli}{article}{
author={Jiang, Y.},
author={Li, H.},
title={Classification of Invariant Subalgebras in a class of factors with property (T)},
status={arXiv: 2601.06353},
year={2026},
}

\bib{jiangliu}{article}{
   author={Jiang, Y.},
   author={Liu, R.},
   title={On invariant subalgebras when the ISR property fails},
   journal={J. Operator Theory},
   volume={95},
   date={2026},
   number={1},}


\bib{JZ}{article}{
author={Jiang, Y.},
author={Zhou, X.},
title={An example of an infinite amenable group with the ISR property},
journal={Math. Z.},
   volume={307},
   date={2024},
   number={2},
   pages={Paper No. 23, 11 pp},
}


\bib{KP}{article}{
   author={Kalantar, M.},
   author={Panagopoulos, N.},
   title={On invariant subalgebras of group and von Neumann algebras},
   journal={Ergodic Theory Dynam. Systems},
   volume={43},
   date={2023},
   number={10},
   pages={3341--3353},}

\bib{KL_book}{book}{
   author={Kerr, D.},
   author={Li, H.},
   title={Ergodic theory Independence and dichotomies},
   series={Springer Monographs in Mathematics},
   publisher={Springer, Cham},
   date={2016},
   pages={xxxiv+431},}

\bib{KV}{article}{
   author={Krogager, A.},
   author={Vaes, S.},
   title={A class of ${\rm II}_1$ factors with exactly two group measure
   space decompositions},
   language={English, with English and French summaries},
   journal={J. Math. Pures Appl. (9)},
   volume={108},
   date={2017},
   number={1},
   pages={88--110},}

\bib{Lin}{article}{
   author={Linnell, P. A.},
   title={Zero divisors and group von Neumann algebras},
   journal={Pacific J. Math.},
   volume={149},
   date={1991},
   number={2},
   pages={349--363},}

\bib{moor}{article}{
author={Moorhouse, G. E.},
title={Cyclictomic fields with applications},
year={2018},
note={lecture note available at \url{https://ericmoorhouse.org/handouts/cyclotomic_fields.pdf}},
}

   \bib{Mor}{book}{
   author={Morandi, P.}, 
   title={Field and Galois Theory}, 
   series={Graduate Texts in Mathematics},
   volume={167},
   publisher={Springer-Verlag New York, Inc},
   date={[1996] \copyright 1996},
   pages={x+281,}}
   

\bib{Osin}{article}{
   author={Osin, D.},
   title={Small cancellations over relatively hyperbolic groups and
   embedding theorems},
   journal={Ann. of Math. (2)},
   volume={172},
   date={2010},
   number={1},
   pages={1--39},}



\bib{Schmidt_book}{book}{
   author={Schmidt, K.},
   title={Dynamical systems of algebraic origin},
   series={Progress in Mathematics},
   volume={128},
   publisher={Birkh\"{a}user Verlag, Basel},
   date={1995},
   pages={xviii+310},}

\bib{masa_book}{book}{
   author={Sinclair, A.},
   author={Smith, R.},
   title={Finite von Neumann algebras and masas},
   series={London Mathematical Society Lecture Note Series},
   volume={351},
   publisher={Cambridge University Press, Cambridge},
   date={2008},
   pages={x+400},}

  \bib{Suz2020}{article}{
   author={Suzuki, Y.},
   title={Complete descriptions of intermediate operator algebras by
   intermediate extensions of dynamical systems},
   journal={Comm. Math. Phys.},
   volume={375},
   date={2020},
   number={2},
   pages={1273--1297},}

\bib{Suz2026}{article}{
   author={Suzuki, Y.},
   title={Crossed product splitting of intermediate operator algebras via
   2-cocycles},
   journal={Math. Ann.},
   volume={394},
   date={2026},
   number={2},
   pages={Paper No. 38, 37},}
   
\end{biblist}
\end{bibdiv}
\end{document}